 \numberwithin{equation}{section}
\newtheorem{theorem}{Theorem}[section]
\newtheorem{proposition}[theorem]{Proposition}
\newtheorem{lemma}[theorem]{Lemma}
\newtheorem{corollary}[theorem]{Corollary}
\newtheorem{conjecture}[theorem]{Conjecture}
\newtheorem{D}[theorem]{Definition}
\newenvironment{definition}{\begin{D} \rm }{\end{D}}
\newtheorem{R}[theorem]{Remark}
\newenvironment{remark}{\begin{R}\rm }{\end{R}}
\newtheorem{E}[theorem]{Example}
\newenvironment{example}{\begin{E}\rm }{\end{E}}
\let\@wraptoccontribs\wraptoccontribs\makeatother
\newcommand{\Dis}{\displaystyle}
\def\Ar{\mathbb{R}}
\def\Cee{\mathbb{C}}
\def\Id{\operatorname{Id}}
\def\Ker{\operatorname{Ker}}
\def\Hom{\operatorname{Hom}}
\def\Ext{\operatorname{Ext}}
\def\Gr{\operatorname{Gr}}
\def\im{\operatorname{Im}}
\def\Spec{\operatorname{Spec}}
\def\scrO{\mathcal{O}}
\def\codim{\operatorname{codim}}
\def\spcheck{^{\vee}}
\def\hX{\widehat{X}}
\def\cX{\mathcal{X}}
\def\cC{\mathcal{C}}
\def\cY{\mathcal{Y}}
\def\cU{\mathcal{U}}
\def\uOb{\underline\Omega^\bullet}
\def\uOp{\underline\Omega^p}
\def\uOnp{\underline\Omega^{n-p}}
  \def\bD{\mathbb{D}}
   \def\uh{\underline{h}}
\title{Higher Du Bois and higher rational singularities}
\author[R. Friedman]{Robert Friedman}
\address{Columbia University, Department of Mathematics, New York, NY 10027}
\email{rf@math.columbia.edu}
\author[R. Laza]{Radu Laza}
\address{Stony Brook University, Department of Mathematics, Stony Brook, NY 11794}
\email{radu.laza@stonybrook.edu}
\begin{document}
\begin{abstract}
We prove that the higher direct images  $R^qf_*\Omega^p_{\cY/S}$ of the sheaves of relative K\"ahler differentials are locally free and compatible with arbitrary base change for flat proper families whose fibers have $k$-Du Bois  local complete intersection singularities,  for $p\leq k$ and all $q\geq 0$, generalizing a result of Du Bois (the case $k=0$). We then propose a definition of $k$-rational singularities extending the definition of rational singularities, and show that, if $X$ is a $k$-rational variety with either isolated or local complete intersection singularities, then $X$ is $k$-Du Bois. As applications, we discuss the behavior of Hodge numbers in families  and the unobstructedness of deformations of singular Calabi-Yau varieties. 

In an appendix, Morihiko Saito proves that, in the case of hypersurface singularities, the $k$-rationality definition proposed here is equivalent to a previously given numerical definition for $k$-rational singularities. As an immediate consequence, it follows that for hypersurface singularities, $k$-Du Bois singularities are $(k-1)$-rational. This statement has recently been proved for all local complete intersection singularities by Chen-Dirks-Musta\c{t}\u{a}. 
\end{abstract}

\thanks{Research of the second author is supported in part by NSF grant DMS-2101640}.
 \bibliographystyle{amsalpha}
\maketitle

\section{Introduction}\label{section1}
The Hodge numbers are constant in a smooth family of complex projective varieties over a connected base. A powerful  way of encoding this fundamental fact is Deligne's theorem  \cite{Deligne-L}:  If $f:\cY\to S$ is a smooth morphism  of complex projective varieties,  then the higher direct image sheaves $R^qf_*\Omega^p_{\cY/S}$ of the relative K\"ahler differentials are  locally free and compatible with  base change. This theorem fails for families of varieties which have singular fibers
 (and in positive characteristic). For $Y$ a singular compact complex algebraic variety, Du Bois \cite{duBois} showed that the K\"ahler-de Rham complex $\Omega_Y^\bullet$ should be replaced by the \textsl{filtered de Rham} or \textsl{Deligne-Du Bois} complex  $\uOb_Y$  whose graded pieces $\uOp_Y\in D_b^{\text{\rm{coh}}}(Y)$ play  the role of $\Omega_Y^p$ (see \cite[\S7.3]{PS}). Namely, the associated spectral sequence with $E_1^{p,q}=\mathbb H^q(Y;\uOp_Y)$ degenerates at $E_1$ and 
computes $H^{p+q}(Y,\Cee)$ together with the Hodge filtration associated to the  mixed Hodge structure on $H^*(Y)$. However, the associated Hodge numbers $\uh^{p,q}:=\dim \mathbb H^q(Y;\uOp_Y)$ do not behave well in families in general. 
 
 \smallskip
 
 The filtered de Rham complex is related to the   complex of K\"ahler differentials  via the canonical comparison K\"ahler-to-Du Bois map $\phi^p:\Omega_Y^p\to \uOp_Y$.  The maps $\phi^p$ are   isomorphisms for all $p$ only when $Y$ is smooth, at least when $Y$ is a local complete intersection  \cite[Theorem 3.39, Theorem F]{MP-loc}. It is thus natural to  consider the case when $\phi^p$ is a quasi-isomorphism in a certain range.   Steenbrink \cite[\S3]{Steenbrink} introduced the notion of \textsl{Du Bois singularities},  which play a role in the study of compactifications of moduli. By definition, $Y$ is Du Bois if $\phi^0$ is a quasi-isomorphism. Following \cite{MOPW} and \cite{JKSY-duBois}, we say that  $Y$ is \textsl{$k$-Du Bois} if $\phi^p$ is a quasi-isomorphism for $0\le p\le k$. Thus $0$-Du Bois singularities are exactly the Du Bois singularities in Steenbrink's terminology. A key property satisfied by  Du Bois singularities is the following:
 
 \begin{theorem}[{Du Bois \cite[Thm. 4.6]{duBois}, \cite[Lem. 1]{DBJ}}] \label{Thm-duBois}
Let $f:\cY\to S$ be a flat proper family of complex algebraic varieties. Assume that some fiber $Y_s$ has Du Bois singularities.  Then, possibly after replacing $S$ by a neighborhood of  $s$, for all $q\ge  0$, the sheaves $R^qf_*\scrO_\cY$ are locally free of finite type and compatible with base change. 
\end{theorem}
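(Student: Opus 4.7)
My plan is to combine Grauert's cohomology and base change theorem with the Hodge-theoretic control of $h^q(Y_s,\scrO_{Y_s})$ afforded by the Du Bois hypothesis, by comparing the structure sheaf with a relative version of the Deligne-Du Bois complex.

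By Grauert's theorem for proper flat morphisms, to establish local freeness of $R^qf_*\scrO_\cY$ together with its compatibility with arbitrary base change in a neighborhood of $s$, it is enough to show that for every $q$ the fiber dimension $t\mapsto h^q(Y_t,\scrO_{Y_t})$ is locally constant at $s$. Upper semi-continuity of cohomology in flat proper families always gives $h^q(Y_t,\scrO_{Y_t})\le h^q(Y_s,\scrO_{Y_s})$ for $t$ near $s$, so the content is to produce the reverse lower bound.

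On the Du Bois fiber, the quasi-isomorphism $\phi^0:\scrO_{Y_s}\xrightarrow{\sim}\underline{\Omega}^0_{Y_s}$ combined with $E_1$-degeneration of the Deligne-Du Bois spectral sequence yields
\[
h^q(Y_s,\scrO_{Y_s}) \;=\; \dim\mathbb H^q(Y_s,\underline{\Omega}^0_{Y_s}) \;=\; \dim \operatorname{Gr}_F^0 H^q(Y_s,\Cee).
\]
The technical heart is to transport this identification to nearby fibers. Concretely, I would construct a relative version $\underline{\Omega}^0_{\cY/S}$ of the Deligne-Du Bois complex via a simultaneous simplicial hyperresolution $\epsilon:\cY_\bullet\to\cY$ with each $\cY_i$ smooth proper over $S$. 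Deligne's smooth-proper theorem applied stratum-by-stratum, together with a simplicial totalization and spectral-sequence argument, should give that $R^qf_*\underline{\Omega}^0_{\cY/S}$ is locally free and base-change compatible on all of $S$, with fiber at any $t$ equal to $\mathbb H^q(Y_t,\underline{\Omega}^0_{Y_t})=\operatorname{Gr}_F^0 H^q(Y_t,\Cee)$.

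To conclude, one compares $R^qf_*\scrO_\cY$ with $R^qf_*\underline{\Omega}^0_{\cY/S}$ via the map induced by $\scrO_\cY\to\underline{\Omega}^0_{\cY/S}$: the Du Bois hypothesis at $s$ forces this to be an isomorphism on the $s$-fiber, and a Nakayama-type argument, using local freeness of the target, extends this to a sheaf isomorphism in a neighborhood of $s$, transferring the desired properties to $R^qf_*\scrO_\cY$. The main obstacle is the construction of the relative Du Bois complex with the required base-change behavior, since $\underline{\Omega}^0_{\cY/S}$ does not commute with arbitrary base change in general; the Du Bois hypothesis at $s$ is precisely what makes the absolute-versus-relative comparison close at the special fiber and then propagates locally via coherent-sheaf semi-continuity.
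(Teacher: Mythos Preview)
Your approach has a genuine gap at precisely the point you flag as the ``main obstacle,'' and it is fatal rather than technical. A simultaneous simplicial hyperresolution $\epsilon\colon \cY_\bullet \to \cY$ with each $\cY_i$ \emph{smooth and proper over $S$} does not exist in the situations of interest: if $Y_s$ is singular while nearby fibers $Y_t$ are smooth, any hyperresolution of $\cY$ must introduce exceptional loci over $s$ that have no counterpart over $t$, so the levels cannot all be smooth over $S$. Without this, there is no object $\underline{\Omega}^0_{\cY/S}$ whose higher direct images are locally free with fiber $\mathbb{H}^q(Y_t;\underline{\Omega}^0_{Y_t})$, and indeed $t\mapsto \dim\Gr_F^0 H^q(Y_t;\Cee)$ is \emph{not} locally constant in general flat proper families (e.g.\ a cuspidal degeneration of curves). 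So the lower bound you need cannot be obtained this way, and the Nakayama step never gets off the ground.

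The paper's argument (which is the $k=0$ case of the proof of Theorem~\ref{mainThm}, following \cite{DBJ}) avoids any relative Du Bois construction. One reduces to $S=\Spec A$ with $A$ Artin local; the key point is then purely topological: $\cY$ and its closed fiber $Y$ have the \emph{same underlying space}, so $H^i(\cY;\Cee)=H^i(Y;\Cee)$. The surjection $H^i(Y;\Cee)\twoheadrightarrow \mathbb{H}^i(Y;\underline{\Omega}^0_Y)\cong H^i(Y;\scrO_Y)$ (from $E_1$-degeneration plus the Du Bois hypothesis) factors through $H^i(\cY;\scrO_\cY)$, forcing the restriction $H^i(\cY;\scrO_\cY)\to H^i(Y;\scrO_Y)$ to be surjective for every $i$. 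A length count over $A$ (Theorem~\ref{thm5} and Lemma~\ref{lemma2}) then yields freeness and base change. The moral is that over an infinitesimal base the constant sheaf does the work you were asking a relative Hodge-theoretic object to do.
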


The theorem can be interpreted (in particular) as giving a  relation between the   mixed Hodge structure $H^*(Y_0)$ of a singular fiber $Y_0$ with the limit mixed Hodge $H^*_{\lim}$ associated to a one-parameter smoothing $\cX/\Delta$.  
In this version, the theorem (for dimension $2$ slc hypersurface singularities) was independently established by Shah \cite{Shah}, and plays a key role in the study of degenerations of $K3$ surfaces (e.g.\ \cite{Shah2}) and related objects (e.g. \cite{Laza}, \cite{KLSV}). Theorem \ref{Thm-duBois} continues to have important consequences for the study of compact moduli of varieties of general type (see e.g.\ \cite{KollarBook}, esp. \S2.5 of loc. cit.).

Here, we prove the following generalization of Theorem~\ref{Thm-duBois} for the case of a local complete intersection (lci) morphism: 

\begin{theorem}\label{mainThm}
Let $f:\cY\to S$ be a flat proper family of complex algebraic varieties and let $s\in S$. Suppose that the fiber $Y_s$ has $k$-Du Bois  lci  singularities. Then, possibly after replacing $S$ by a neighborhood of $s$,  the higher direct image sheaves $R^qf_*\Omega^p_{\cY/S}$ of the relative K\"ahler differentials are locally free and compatible with base change for $0\le p\le k$ and all $q\ge 0$. 
\end{theorem}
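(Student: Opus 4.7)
The plan is to reduce the theorem to a base-change statement for the Deligne--Du Bois complex in the family, and then use $k$-Du Bois rigidity combined with Hodge-theoretic input to conclude that the relevant fiberwise cohomology dimensions are locally constant. Grauert's theorem will then yield both local freeness and base-change compatibility of the sheaves $R^q f_*\Omega^p_{\cY/S}$.

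The first task is to show that both the lci and the $k$-Du Bois hypotheses are open in the family. Openness of the lci locus for a flat morphism is classical, so after shrinking $S$ around $s$ we may assume $f: \cY \to S$ is an lci morphism. For the $k$-Du Bois property, the strategy is to characterize it in the lci setting through semi-continuous analytic invariants --- for instance the minimal exponent for hypersurfaces, or the Hodge ideal filtration of Musta\c{t}\u{a}--Popa in the general lci case --- and then invoke semi-continuity to conclude that, after possibly shrinking $S$ further, every fiber $Y_t$ remains $k$-Du Bois lci.

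Next I would construct a relative Deligne--Du Bois complex $\underline{\Omega}^\bullet_{\cY/S}$ via a simplicial resolution (or hyperresolution) of $\cY$ over $S$, and verify that it is compatible with base change to each fiber, so that $\underline{\Omega}^p_{\cY/S}|_{Y_t} \simeq \underline{\Omega}^p_{Y_t}$. The natural comparison morphism $\phi^p_{\cY/S}: \Omega^p_{\cY/S} \to \underline{\Omega}^p_{\cY/S}$ then restricts fiberwise to the absolute $\phi^p_{Y_t}$, which by openness of the $k$-Du Bois condition is a quasi-isomorphism for $0 \le p \le k$ at every $t$ near $s$. By flatness of $f$ and a cohomology-and-base-change argument, this fiberwise quasi-isomorphism extends to a quasi-isomorphism in a neighborhood of $Y_s$, so it suffices to prove local freeness and base change for $R^q f_* \underline{\Omega}^p_{\cY/S}$ with $0 \le p \le k$.

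The main difficulty lies in the final step: establishing that $\dim \mathbb{H}^q(Y_t, \underline{\Omega}^p_{Y_t})$ is locally constant for $0 \le p \le k$ as $t$ varies near $s$. By the $E_1$-degeneration of the Du Bois spectral sequence recalled in the introduction, this dimension equals $\dim \mathrm{Gr}^p_F H^{p+q}(Y_t, \mathbb{C})$. The key Hodge-theoretic input is that for $k$-Du Bois lci fibers, the Hodge filtration in the range $F^p$ with $p \le k$ should be rigid under specialization: by embedding the family into a smoothing $\cX \to \Delta$ and invoking the limit mixed Hodge structure, the graded pieces $\mathrm{Gr}^p_F H^n(Y_t, \mathbb{C})$ for $p \le k$ would coincide with those of the limit MHS, hence with the corresponding Hodge numbers of a smooth generic fiber, and are therefore independent of $t$. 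Once this constancy is in hand, Grauert's theorem furnishes the required local freeness and base-change compatibility, completing the proof.
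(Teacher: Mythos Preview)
Your proposal has several genuine gaps, and it diverges substantially from the paper's actual argument.

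\textbf{The critical missing ingredient: flatness of $\Omega^p_{\cY/S}$.} To apply Grauert's theorem (or any semicontinuity argument) to $R^qf_*\Omega^p_{\cY/S}$, you need $\Omega^p_{\cY/S}$ to be flat over $S$. For $p=0,1$ this is automatic, but for $p\ge 2$ it fails in general: already for a family of curves acquiring a node, $\Omega^2_{\cY/S}$ has torsion over $S$. The paper devotes all of Section~2 to proving this flatness (Theorem~\ref{flatness}) under the hypothesis $\codim\Sigma_s\ge 2k+1$, which in turn is supplied by the $k$-Du Bois lci assumption via Theorem~\ref{thmdim}. This uses Greuel's depth estimates for $\Omega^p$ on an lci and a delicate analysis of the Koszul filtration. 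You never address this point, and without it neither Grauert nor any base-change formalism applies to $\Omega^p_{\cY/S}$.

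\textbf{Other problems.} (a) A relative Du Bois complex $\underline\Omega^\bullet_{\cY/S}$ compatible with \emph{arbitrary} base change is not a standard object; constructing one and proving the required compatibility would be a theorem in itself. (b) Your step ``fiberwise quasi-isomorphism $\phi^p_{Y_t}$ $+$ flatness of $f$ $\Rightarrow$ quasi-isomorphism $\phi^p_{\cY/S}$'' is not valid without knowing flatness of the cone of $\phi^p_{\cY/S}$, which again comes back to flatness of $\Omega^p_{\cY/S}$. (c) The final step, deducing constancy of $\dim\Gr^p_FH^{p+q}(Y_t)$ from a limit MHS comparison, is essentially what Corollary~\ref{mainThmcor} asserts as a \emph{consequence} of the theorem; invoking it here is circular. (d) Openness of the $k$-Du Bois condition is not established in the paper and is not needed for its proof.

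\textbf{What the paper actually does.} After establishing flatness of $\Omega^p_{\cY/S}$ for $p\le k$, the paper reduces to an Artin local base $A$ and runs a length-counting argument (Theorem~\ref{thm5}). The Hodge-theoretic input enters in a single, sharp place: since $\cY$ and $Y$ have the same underlying topological space, $H^i(\cY;\Cee)=H^i(Y;\Cee)$, and by $E_1$-degeneration (Corollary~\ref{DBdegen}) the map $H^i(Y;\Cee)\to\mathbb H^i(Y;\Omega^\bullet_Y/\sigma^{\ge k+1})$ is surjective. This forces $\mathbb H^i(\cY;\Omega^\bullet_{\cY/A}/\sigma^{\ge k+1})\to\mathbb H^i(Y;\Omega^\bullet_Y/\sigma^{\ge k+1})$ to be surjective, and a comparison of lengths then shows that all the a priori inequalities are equalities, giving surjectivity of $H^q(\cY;\Omega^p_{\cY/A})\to H^q(\overline\cY;\Omega^p_{\overline\cY/\overline A})$ for every quotient $A\to\overline A$. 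No relative Du Bois complex, no openness of $k$-Du Bois, and no limit MHS are used.
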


\begin{remark}
We use the lci assumption to control the sheaves of K\"ahler and relative K\"ahler differentials in two ways.  First, a result of \cite{MP-loc} gives an estimate on the   codimension of the singular locus for $k$-Du Bois lci singularities. Using this and some results of Greuel, we  prove a key technical point: under the lci assumption, the sheaves $\Omega^p_{\cX/S}$ are flat over $S$ for $p\le k$  (Theorem \ref{flatness}).  In case $k=0$, 
both the codimension estimate and the flatness are automatic  and one recovers Theorem~\ref{Thm-duBois} as a special case. 
\end{remark}

As a corollary, we obtain: 

\begin{corollary}\label{mainThmcor}
Let $f\colon \cY\to S$ be a flat proper family of complex algebraic varieties over an irreducible  base. For $s\in S$, suppose that the fiber $Y_s$ has $k$-Du Bois  lci singularities. Then,  for every fiber $t$ such that $Y_t$ is smooth, $\dim \Gr_F^p H^{p+q}(Y_t) = \dim \Gr_F^p H^{p+q}(Y_s)$ for every $q$ and for $0\le p \le k$. Equivalently $\uh^{p,q}(Y_s)=h^{p,q}(Y_t)$ for all $p\le k$. \qed
\end{corollary}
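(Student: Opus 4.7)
The plan is to invoke Theorem~\ref{mainThm} and then identify the rank of $\mathcal{F}^{p,q} := R^qf_*\Omega^p_{\cY/S}$ with the two Hodge-theoretic dimensions appearing in the statement. First, by Theorem~\ref{mainThm}, after replacing $S$ by a neighborhood $U$ of $s$, each $\mathcal{F}^{p,q}$ is locally free and compatible with base change for $0\le p\le k$ and all $q\ge 0$; in particular its rank is locally constant on $U$.

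At the singular point $s$, compatibility with base change gives $\mathcal{F}^{p,q}\otimes_{\scrO_S} k(s)\cong H^q(Y_s,\Omega^p_{Y_s})$. Because $Y_s$ is $k$-Du Bois, the comparison map $\phi^p:\Omega^p_{Y_s}\to \uOp_{Y_s}$ is a quasi-isomorphism for $p\le k$, so $H^q(Y_s,\Omega^p_{Y_s})\cong \mathbb{H}^q(Y_s,\uOp_{Y_s})$, and this vector space has dimension $\uh^{p,q}(Y_s)=\dim\Gr_F^pH^{p+q}(Y_s)$ by the $E_1$-degeneration of the filtered de~Rham spectral sequence recalled in the introduction.

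To compare with a smooth fiber $Y_t$, I would use that the smooth locus $V\subset S$ of $f$ is Zariski open and nonempty by hypothesis, hence dense and connected by irreducibility of $S$. Thus $U\cap V\neq\emptyset$, and at any point $t'\in U\cap V$ base change identifies the rank of $\mathcal{F}^{p,q}$ with $\dim H^q(Y_{t'},\Omega^p_{Y_{t'}})=h^{p,q}(Y_{t'})=\dim\Gr_F^pH^{p+q}(Y_{t'})$. Deligne's classical theorem \cite{Deligne-L} makes this Hodge number locally constant on the connected smooth locus $V$, so it equals $h^{p,q}(Y_t)$ for any smooth fiber $Y_t$. Local constancy of the rank of $\mathcal{F}^{p,q}$ on $U$ then equates this value with the rank at $s$ computed above, giving both formulations in the corollary.

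There is no substantive obstacle: Theorem~\ref{mainThm} does all the heavy lifting (local freeness and base change under the $k$-Du Bois lci hypothesis), and the role of the corollary is only to translate that statement into the language of Hodge numbers. The two small points to track are (i) the density of the smooth locus in the irreducible base, which reduces comparison with an arbitrary smooth $Y_t$ to a smooth fiber in $U$, and (ii) the identification $\dim H^q(Y_s,\Omega^p_{Y_s})=\uh^{p,q}(Y_s)$ supplied by the $k$-Du Bois hypothesis together with the Deligne--Du Bois spectral sequence.
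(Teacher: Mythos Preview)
Your argument is correct and is exactly the intended deduction: the paper states Corollary~\ref{mainThmcor} with a \qed and no proof, treating it as immediate from Theorem~\ref{mainThm}, and what you have written is the natural unpacking of that implication. The only cosmetic point is that the $E_1$-degeneration you invoke is Theorem~\ref{thm-e1} (not just ``the introduction''), and you might remark that the neighborhood $U$ is irreducible (as a nonempty open in the irreducible $S$), so ``locally constant'' upgrades to ``constant'' and $U\cap V\neq\emptyset$ automatically.
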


In the case of hypersurface singularities, results similar to Corollary~\ref{mainThmcor} were obtained by Kerr-Laza   \cite{KL2, KL3} with further clarifications given by Saito (personal communications) based on \cite{SaitoV}.  These type of results are for example relevant to the study of the moduli of cubic fourfolds \cite{Laza}.

\medskip

Another application of Theorem \ref{mainThm} is the following  generalization of the results of Kawamata \cite{Kawamata}, Ran \cite{Ran}, and Tian \cite{Tian} on the unobstructedness of deformations for nodal Calabi-Yau varieties  in any dimension, where the special case of isolated hypersurface singularities  was established in \S6 of the first version of \cite{FL}:

\begin{corollary}\label{CYdef} 
Let $Y$ be a canonical  Calabi-Yau variety (Definition~\ref{defcanonCY}) which is additionally a scheme with $1$-Du Bois lci (not necessarily isolated) singularities.  Then the functor $\mathbf{Def}(Y)$ is unobstructed. 
\end{corollary}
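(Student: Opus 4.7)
The plan is to verify the Kawamata-Ran $T^1$-lifting criterion: writing $B_n=\Cee[t]/(t^{n+1})$, $\mathbf{Def}(Y)$ is unobstructed provided that, for every $n\ge 0$ and every deformation $f_n:\cY_n\to\Spec B_n$ of $Y$, the $B_n$-module
$$T^1(\cY_n/B_n):=\Ext^1_{\cY_n}(\Omega^1_{\cY_n/B_n},\scrO_{\cY_n})$$
is free of rank $\dim_\Cee T^1(Y)$. Because $Y$ is lci and $\cY_n$ is flat over $B_n$, each $\cY_n$ is itself lci and therefore Gorenstein, so $f_n$ is a proper Gorenstein morphism of relative dimension $n=\dim Y$.

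First I would use the canonical Calabi-Yau hypothesis to arrange $\omega_{\cY_n/B_n}\cong\scrO_{\cY_n}$ for every $n$: the relative dualizing sheaf is a line bundle on $\cY_n$ restricting to $\omega_Y\cong\scrO_Y$ on the central fiber, and the cohomological vanishings built into Definition~\ref{defcanonCY} (notably $H^1(Y,\scrO_Y)=0$) force the trivialization to lift uniquely along each square-zero step $B_n\to B_{n-1}$. Relative Grothendieck-Serre duality for the proper Gorenstein morphism $f_n$, applied to the coherent sheaf $\Omega^1_{\cY_n/B_n}$, then yields a canonical $B_n$-linear isomorphism
$$T^1(\cY_n/B_n)\cong\Hom_{B_n}\!\bigl(R^{n-1}f_{n,*}\Omega^1_{\cY_n/B_n},\,B_n\bigr).$$

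Next I would invoke Theorem~\ref{mainThm} for the family $f_n$ with $k=1$, $p=1$, and $q=n-1$: because the central fiber has $1$-Du Bois lci singularities, the sheaf $R^{n-1}f_{n,*}\Omega^1_{\cY_n/B_n}$ is locally free on $\Spec B_n$ and compatible with base change; over the Artinian local ring $B_n$, this simply means it is a free $B_n$-module of rank $\dim H^{n-1}(Y,\Omega^1_Y)$. Its $B_n$-dual is then free of the same rank, and the displayed isomorphism forces $T^1(\cY_n/B_n)$ to be free of rank $\dim_\Cee T^1(Y)$, as required to close the $T^1$-lifting argument. The main technical hurdle will be the duality identification in the preceding paragraph: applying Grothendieck-Serre to the non-locally-free coherent sheaf $\Omega^1_{\cY_n/B_n}$, and verifying global triviality of $\omega_{\cY_n/B_n}$ (not merely of its central-fiber restriction), both rely crucially on the combined lci and canonical Calabi-Yau hypotheses.
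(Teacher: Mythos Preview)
Your strategy via the $T^1$-lifting criterion and relative duality is exactly the paper's, and most of the logic is sound. There is, however, a real gap in your trivialization of $\omega_{\cY_n/B_n}$. You write that ``the cohomological vanishings built into Definition~\ref{defcanonCY} (notably $H^1(Y,\scrO_Y)=0$)'' let the trivialization of $\omega_Y$ propagate along the square-zero steps. But Definition~\ref{defcanonCY} imposes no such vanishing: a canonical Calabi-Yau variety is only required to be reduced, Gorenstein with canonical singularities, and to satisfy $\omega_Y\cong\scrO_Y$. Without $H^1(Y,\scrO_Y)=0$, the kernel of $\Pic(\cY_n)\to\Pic(\cY_{n-1})$ is $H^1(Y,\scrO_Y)$, and there is no reason the line bundle $\omega_{\cY_n/B_n}$ remains trivial as $n$ grows.

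The paper closes this gap using precisely the hypothesis you have not yet exploited. Since $Y$ is $1$-Du Bois it is $0$-Du Bois, so Theorem~\ref{Thm-duBois} makes $R^if_{n,*}\scrO_{\cY_n}$ free over $B_n$ and compatible with base change for every $i$. Relative duality then makes each $R^if_{n,*}\omega_{\cY_n/B_n}$ free; in particular $R^0f_{n,*}\omega_{\cY_n/B_n}$ is free of rank one, a generator restricts to a generator of $H^0(Y,\omega_Y)\cong\Cee$, and Nakayama gives $\omega_{\cY_n/B_n}\cong\scrO_{\cY_n}$. A second, smaller point: the identification $\Ext^1(\Omega^1_{\cY_n/B_n},\scrO_{\cY_n})\cong\Hom_{B_n}(R^{n-1}f_{n,*}\Omega^1_{\cY_n/B_n},B_n)$ only collapses cleanly once $R^qf_{n,*}\Omega^1_{\cY_n/B_n}$ is free for \emph{all} $q$ (so that the hyper-Ext spectral sequence over $B_n$ degenerates). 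Theorem~\ref{mainThm} provides this for every $q$, but you invoke only the case $q=n-1$.
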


A more well-known class of singularities are the {\it rational singularities}. 
By work of Steenbrink \cite{Steenbrink}, Kov\'acs \cite{Kovacs99}, and others, a rational singularity is  Du Bois.  In the context of higher Du Bois singularities, it is natural to consider {\it higher rational singularities}. In the case of hypersurfaces $X$ in a smooth variety, the $k$-Du Bois singularities  are characterized numerically by the condition $\tilde \alpha_X\ge k+1$ (\cite[Thm. 1.1]{MOPW}, \cite[Thm. 1]{JKSY-duBois}), where  $\tilde \alpha_X$ is the {\it minimal exponent} invariant, a generalization of the log canonical threshold. Given that $\tilde \alpha_X>1$ characterizes rational hypersurface singularities (\cite{Saito-b}), it is natural to define {\it $k$-rational singularities} numerically by the strict inequality $\tilde \alpha_X> k+1$ (\cite[Def. 4.3]{KL2}); this definition also occurs implicitly in \cite{MOPW} and \cite{JKSY-duBois}. While results can be obtained using this numerical definition, it has the disadvantage of being restricted to hypersurfaces, and to be somewhat ad hoc. A more general definition of $k$-rational isolated singularities, based on local vanishing properties (see esp.\ \cite{MP-Inv}), was given in \S3 of the first version of 
\cite{FL} (see also \cite{MP-rat} for further discussion). 
  
In this paper, we propose a more intrinsic definition of $k$-rational singularities in general (Definition~\ref{defkrat}) and show that it agrees with the usual definition of rational singularities for $k=0$ and with the definition of \cite[\S3]{FL} (under mild assumptions; see Corollaries~\ref{cor-rat0} and \ref{cor-olddef}). Additionally, for hypersurface singularities, M. Saito proves that the new definition proposed here is indeed equivalent to the previous numerical definition mentioned above (Theorem~\ref{Thm-A}). The main advantage of the definition of higher rational singularities given here is that it naturally factors through the higher Du Bois condition.  In analogy with the case $k=0$, we conjecture that {\it $k$-rational implies $k$-Du Bois} in general. In the first version of \cite[\S3]{FL} (and expanded in \cite{FL22d}), we verified this conjecture under the assumption of isolated lci  singularities. Here, we generalize this in both directions, for arbitrary isolated or lci singularities:

\begin{theorem}\label{thm-krat} Let $X$ have either  lci singularities (not necessarily isolated) or isolated singularities (not necessarily lci). If $X$ is $k$-rational, then $X$ is $k$-Du Bois.
\end{theorem}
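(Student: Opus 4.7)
The plan is to reduce to a Koll\'ar-Kov\'acs style splitting argument in the derived category $D^b(X)$. Choose a log resolution $f\colon \widetilde{X}\to X$ with exceptional divisor $E$, giving the natural factorization
\[
\Omega^p_X \xrightarrow{\phi^p} \uOp_X \xrightarrow{\psi^p} Rf_*\Omega^p_{\widetilde{X}}(\log E)(-E)
\]
whose composite is the pullback map $\Omega^p_X \to Rf_*\Omega^p_{\widetilde{X}}(\log E)(-E)$. Definition~\ref{defkrat} should be designed so that $k$-rationality forces $\psi^p$ to be a quasi-isomorphism for $0\le p\le k$. To prove $k$-Du Bois it then suffices to show that the composite above is a quasi-isomorphism: once that holds, $\phi^p$ admits a left inverse in $D^b(X)$, so $\phi^p$ is split injective; combined with the fact that $\psi^p$ is a quasi-isomorphism, this forces $\phi^p$ itself to be a quasi-isomorphism.

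For the lci case, I would combine two ingredients. First, by induction on $k$ (using that $k$-rational is expected to imply $(k-1)$-rational, hence $(k-1)$-Du Bois by the inductive hypothesis), we may assume $X$ is at least $(k-1)$-Du Bois. The Musta\c{t}\u{a}-Popa codimension estimate for $(k-1)$-Du Bois lci singularities then gives $\codim_X \Sing(X) \geq 2k-1$. Second, the lci hypothesis together with Theorem~\ref{flatness} and Greuel's results ensures that $\Omega^p_X$ is sufficiently well-behaved (enough depth, reflexive behaviour in the expected range) for $p\le k$. The cone of $\Omega^p_X \to Rf_*\Omega^p_{\widetilde{X}}(\log E)(-E)$ is supported on $\Sing(X)$; comparing its cohomological degree of support with the codimension then forces it to vanish.

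For the isolated case, the argument simplifies because the cone of $\phi^p$ is supported on a finite set. I would work locally at each singular point $x\in X$, where the question reduces to the vanishing of certain local cohomology groups on $\widetilde{X}$ (equivalently, on the exceptional fiber over $x$). The intrinsic formulation in Definition~\ref{defkrat}, combined with local duality, should translate this vanishing directly into a consequence of $k$-rationality. This mirrors the argument already carried out in the first version of \cite{FL} for isolated lci singularities, except that the lci assumption is replaced by the finiteness of $\Sing(X)$, which makes the depth/codimension bookkeeping trivial.

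The main obstacle is Step~1 above: verifying that the composite $\psi^p\circ\phi^p$ truly identifies with the natural pullback map from $\Omega^p_X$, and that the target object in Definition~\ref{defkrat} is natural enough both to receive this pullback and to be compared with the derived pushforward from a resolution. Once these functorial compatibilities are nailed down, the rest of the proof is a combination of codimension/depth estimates in the lci case and local cohomology in the isolated case; both are fairly standard once the setup is correct.
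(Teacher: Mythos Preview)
Your proposal has a circular step and a genuine gap.

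\textbf{The circular step.} You write that $k$-rationality ``should'' make $\psi^p$ a quasi-isomorphism, and then use this together with the left inverse to conclude that $\phi^p$ is a quasi-isomorphism. But Definition~\ref{defkrat} only says that the \emph{composite} $\psi^p\circ\phi^p$ is a quasi-isomorphism. From this you do get a left inverse $h^p=(\psi^p\circ\phi^p)^{-1}\circ\psi^p$ to $\phi^p$, and hence $\uOp_X\cong \Omega^p_X\oplus \mathcal{G}^p$ in $D^b(X)$ for some cone $\mathcal{G}^p$; but you do \emph{not} get that $\psi^p$ is a quasi-isomorphism. Indeed, $\psi^p$ being a quasi-isomorphism is equivalent, once $\psi^p\circ\phi^p$ is, to $\phi^p$ being one --- which is exactly what you are trying to prove. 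So the argument as written assumes its conclusion. (Incidentally, the target of $\psi^p$ should be $R\pi_*\Omega^p_{\hX}(\log E)$, not $R\pi_*\Omega^p_{\hX}(\log E)(-E)$; see Lemma~\ref{lemma-onp}.)

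\textbf{The gap in the lci case.} A split injection in the derived category does not become an isomorphism just because the complementary summand $\mathcal{G}^p$ is supported on a locus of high codimension: there is no ``degree of support vs.\ codimension'' principle that kills such a summand. The paper supplies the missing ingredient in two stages. First, in the zero-dimensional case (Proposition~\ref{dim=0case}) one passes to a projective completion and uses the Hodge-theoretic input that $H^i(Y;\Cee)\to \mathbb{H}^i(Y;\uOb_Y/F^{k+1})$ is surjective (Corollary~\ref{DBdegen}); combined with the left inverse this forces the local-cohomology map at $\Sigma_k$ to be an isomorphism, and then the localization principle finishes. Second, for $\dim\Sigma_k>0$ in the lci case (Theorem~\ref{mainkratthm}), one cuts by a general hyperplane $H$ and shows that a \emph{compatible} system of left inverses $\{h^p\}$ restricts to one on $H$, using Proposition~\ref{KDexact} and Navarro Aznar's triangle (Proposition~\ref{NAV}); induction on $\dim\Sigma_k$ then reduces to the zero-dimensional case. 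The compatibility condition (with respect to wedging by $1$-forms) is exactly what makes the restriction step work, and it is available because $X$ is $k$-rational (Lemma~\ref{existsleftinv}). None of this is captured by depth/codimension estimates alone.

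For the isolated case your instinct to use local cohomology is right, but the decisive input --- degeneration at $E_1$ on a projective compactification --- is missing from your sketch; without it, the splitting does not upgrade to an isomorphism.
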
 
\begin{remark}
Musta\c{t}\u{a} and Popa  gave an independent proof of Theorem~\ref{thm-krat} for the case of lci singularities (\cite[Thm. B]{MP-rat}).
\end{remark}

The isolated complete intersection case (see \cite{FL22d}) sheds light on the tight relationship between higher rational and higher Du Bois singularities. In the first version of this paper, we made the following conjecture: 

\begin{conjecture}\label{conjDBrat} If $X$ has  lci singularities and $X$ is $k$-Du Bois, then $X$ is $(k-1)$-rational.
\end{conjecture}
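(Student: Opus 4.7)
My plan is to split the conjecture into the hypersurface case, which becomes essentially numerical once one has Saito's Theorem~\ref{Thm-A} in hand, and the general lci case, which I would attempt to reduce to the hypersurface case via a suitable higher-codimension generalization of the minimal exponent.

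For a hypersurface singularity $X$, $k$-Du Bois is known to be equivalent to the numerical condition $\tilde\alpha_X\ge k+1$ on the minimal exponent (\cite{MOPW,JKSY-duBois}), while Theorem~\ref{Thm-A} in the appendix shows that the intrinsic definition of $k$-rational proposed here is equivalent to $\tilde\alpha_X>k+1$. Granting both characterizations, the conjecture for hypersurfaces reduces to the trivial implication $\tilde\alpha_X\ge k+1 \Rightarrow \tilde\alpha_X>k$, and is immediate. This settles the base case $r=1$ of an induction on the embedding codimension.

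For an lci singularity of codimension $r$ in a smooth ambient variety $Z$, cut out locally by a regular sequence $(f_1,\ldots,f_r)$, my approach would be to introduce an analogous invariant $\tilde\alpha_X$ via the $V$-filtration on the $\mathcal{D}$-module built from $\prod_i f_i^{s_i}$ (or via a Bernstein--Sato ideal of the tuple), and prove the parallel numerical equivalences
\[
X \text{ is } k\text{-Du Bois} \iff \tilde\alpha_X\ge k+1, \qquad X \text{ is } (k-1)\text{-rational} \iff \tilde\alpha_X>k.
\]
A more geometric alternative would be to replace $(f_1,\ldots,f_r)$ by a generic linear combination $g=\sum\lambda_i f_i$ and compare the $k$-Du Bois and $k$-rational properties of the hypersurface $V(g)\subset Z$ with those of $X$, iteratively reducing the codimension; a third possibility is a deformation-to-the-normal-cone argument relating $X$ to its tangent cone, which for an lci is a complete intersection of forms in the conormal bundle.

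The main obstacle is precisely this lci step. The intrinsic definition of $k$-rationality is phrased in terms of derived pushforwards from a log resolution, which does not translate transparently into a $\mathcal{D}$-module or minimal-exponent condition when $r>1$. The heart of the conjecture is that the \emph{strict} inequality $\tilde\alpha_X>k$ corresponds to $(k-1)$-rationality rather than merely to $k$-Du Bois; controlling this strictness---either via a higher-codimension $V$-filtration whose jumps simultaneously govern both the filtered de Rham complex $\uOp_X$ and the ``exceptional'' complex that appears in the definition of $k$-rational, or via a cutting argument that tracks how the minimal exponent changes under passage from a regular sequence to a generic linear combination---is where the essential difficulty lies, and is exactly the content of the Musta\c{t}\u{a}--Popa and Chen--Dirks--Musta\c{t}\u{a} results alluded to in the abstract.
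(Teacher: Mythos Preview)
Your hypersurface argument is exactly the paper's: Corollary~\ref{Cor-A} in Saito's appendix derives the hypersurface case of the conjecture by combining the numerical characterization $k$-Du Bois $\iff \widetilde\alpha_X\ge k+1$ (\cite{MOPW}, \cite{JKSY-duBois}) with Theorem~\ref{Thm-A} ($k$-rational $\iff \widetilde\alpha_X>k+1$), so that $k$-Du Bois $\Rightarrow \widetilde\alpha_X\ge k+1>k \Rightarrow (k{-}1)$-rational. There is nothing to add on this part.

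For the general lci case you should be aware that the paper does \emph{not} contain a proof: Conjecture~\ref{conjDBrat} is stated as a conjecture, and the text only records that it has since been established by Chen--Dirks--Musta\c{t}\u{a} \cite{ChenDirksM} (and for isolated lci in \cite{FL22d}). So there is no ``paper's own proof'' to compare your lci outline against. Your instinct to seek an lci analogue of the minimal exponent governed by a $V$-filtration for a tuple $(f_1,\dots,f_r)$ is in fact the route taken in \cite{ChenDirksM}; the alternative you mention of passing to a generic linear combination $g=\sum\lambda_if_i$ would not work directly, since $V(g)$ is a hypersurface containing $X$ rather than one whose singularities model those of $X$, and the $k$-Du Bois/$k$-rational properties of $V(g)$ bear no immediate relation to those of $X$. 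Your own diagnosis of the difficulty --- that one must show the \emph{strict} inequality side of the correspondence survives in higher codimension --- is accurate, and is precisely what the external references supply.
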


 For an  isolated hypersurface  singularity, Conjecture~\ref{conjDBrat}  is an immediate consequence of the following result: 

\begin{proposition}\label{hypersurfcase} Let $(X,x)$ be an isolated hypersurface singularity and let $\widetilde\alpha_{X,x} = \widetilde\alpha_X$ be the minimal  exponent   as defined by Saito \cite{Saito-b}. Then 
\begin{itemize}
\item[\rm(i)] $X$ is $k$-Du Bois   $\iff$ $\widetilde\alpha_X \geq k+1$.
\item[\rm(ii)] $X$ is $k$-rational  $\iff$ $\widetilde\alpha_X > k+1$. \qed
\end{itemize}  
\end{proposition}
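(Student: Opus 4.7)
The proposition is marked with \qed in the statement, signaling that it follows immediately from results cited earlier in the introduction together with Saito's appendix, so my plan is simply to assemble these inputs, one for each of the two equivalences.

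For part~(i), I would invoke the numerical characterization of $k$-Du Bois hypersurface singularities. Theorem~1.1 of \cite{MOPW} and Theorem~1 of \cite{JKSY-duBois}, both recalled in the introduction, assert that for a hypersurface $X$ in a smooth variety the comparison maps $\phi^p\colon\Omega^p_X\to\uOp_X$ are quasi-isomorphisms for all $0\le p\le k$ if and only if $\widetilde\alpha_X\ge k+1$. Since an isolated hypersurface singularity is in particular a hypersurface singularity, part~(i) is immediate.

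For part~(ii), the plan is to quote Saito's Theorem~\ref{Thm-A} from the appendix. That theorem states precisely that, in the hypersurface case, the intrinsic Definition~\ref{defkrat} of $k$-rational singularities coincides with the earlier numerical definition $\widetilde\alpha_X>k+1$ of \cite[Def.~4.3]{KL2} (also implicit in \cite{MOPW} and \cite{JKSY-duBois}). Combined with the hypothesis that $(X,x)$ is an isolated hypersurface singularity, this yields part~(ii).

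The genuine content therefore lies entirely in Saito's appendix, and that is where I expect the main obstacle to be. There one must set up a dictionary between the vanishing cycles and $V$-filtration data governing the minimal exponent $\widetilde\alpha_X$ (as in \cite{Saito-b}) and the comparison-map quasi-isomorphism appearing in Definition~\ref{defkrat}, and show that the upgrade from the weak inequality $\widetilde\alpha_X\ge k+1$ controlling the $k$-Du Bois condition to the strict inequality $\widetilde\alpha_X>k+1$ matches exactly the additional quasi-isomorphism demanded by Definition~\ref{defkrat}. Outside of the hypersurface case neither side of this dictionary is available in a comparable form, which is part of why the general conjectural picture (Conjecture~\ref{conjDBrat}) remains subtle.
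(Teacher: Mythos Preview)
Your proposal is correct and, for part~(i), cites exactly the same results as the paper. For part~(ii) there is a minor divergence worth noting: the paper attributes the \emph{isolated} hypersurface case to \cite[Corollary~6.6]{FL22d}, and reserves Saito's Theorem~\ref{Thm-A} in the appendix for the extension to arbitrary (not necessarily isolated) hypersurface singularities; you instead invoke Theorem~\ref{Thm-A} directly. Since Theorem~\ref{Thm-A} is stated for any reduced hypersurface and hence specializes to the isolated case, your route is perfectly valid; it simply trades the more hands-on argument of \cite{FL22d} for the general mixed-Hodge-module machinery of the appendix. Either way, as the \qed\ signals, the proposition is a matter of assembling the cited characterizations rather than supplying a new argument.
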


Here  (i) follows from  \cite[Thm. 1]{JKSY-duBois}  and \cite[Thm. 1.1]{MOPW} and holds true for a general, not necessarily isolated, hypersurface singularity, and (ii) is proved in \cite[Corollary 6.6]{FL22d}.  In Appendix \ref{AppendixA}, M. Saito proves (ii)  for the case of a general hypersurface singularity, based on the results of \cite{JKSY-duBois}. Thus implies Conjecture~\ref{conjDBrat}  for the case of general hypersurface singularities, not necessarily isolated (Cor. \ref{Cor-A}).
 Musta\c{t}\u{a} and Popa  have proved (ii)  of Proposition~\ref{hypersurfcase}, and hence Conjecture~\ref{conjDBrat} in this case as well (cf. \cite[Thm. E, Cor. F]{MP-rat}). Additionally, in  \cite[Corollary 5.5]{FL22d}, we established Conjecture~\ref{conjDBrat} for isolated lci singularities. Recently,  Chen-Dirks-Musta\c{t}\u{a} \cite{ChenDirksM} have proved Conjecture~\ref{conjDBrat} in general.

 Since $k$-rational singularities are milder than $k$-Du Bois singularities, one expects that more of the Hodge diamond is 
preserved in families with $k$-rational singularities. Indeed, this is the case as shown in \cite[Cor. 4.2]{KL2} (isolated hypersurface $k$-rational singularities) and \cite[Thm. 1]{KLS} (arbitrary rational singularities). Here, we extend these results to $k$-rational lci singularities, and clarify the difference between $k$-Du Bois and $k$-rational in this context. Essentially, for $k$-rational singularities, in addition to the preservation given by Corollary \ref{mainThmcor}, one gains Hodge symmetry in a certain range.  Informally, we can say that the frontier Hodge diamond up to coniveau $k$ is preserved for deformations of $k$-rational singularities. 

\begin{corollary}\label{Corkrat}
Let $f\colon \cY\to S$ be a flat proper family of complex algebraic varieties over an irreducible  base. For $s\in S$, suppose that the fiber $Y_s$ has $k$-rational  lci singularities. Then,  for every fiber $t$ such that $Y_t$ is smooth, and for all $p\le k$,
$$\uh^{p,q}(Y_s)=\uh^{q,p}(Y_s)=\uh^{n-p,n-q}(Y_s) =h^{p,q}(Y_t)= h^{q,p}(Y_t)=h^{n-p,n-q}(Y_t).$$
Moreover, for all $p\le k$,
$$\uh^{p,q}(Y_s) = \dim \Gr_F^p\mathrm{Im}(H^{p+q}(Y_s)\xrightarrow{\pi_s^*} H^{p+q}(\hat Y_s)),$$
  where   $\pi_s:\hat Y_s\to Y_s$ is an arbitrary  projective resolution. 
\end{corollary}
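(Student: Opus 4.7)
The plan is to combine three ingredients: Theorem~\ref{thm-krat} to pass from $k$-rational to $k$-Du Bois, the defining quasi-isomorphism of $k$-rationality (Definition~\ref{defkrat}) together with Grothendieck--Saito duality between $\uOp_{Y_s}$ and $\uOnp_{Y_s}$, and the Hodge/Serre symmetries on the pure structure $H^\bullet(\hat Y_s)$. Since $Y_s$ is $k$-rational, Theorem~\ref{thm-krat} gives that $Y_s$ is $k$-Du Bois, so Corollary~\ref{mainThmcor} immediately delivers $\uh^{p,q}(Y_s)=h^{p,q}(Y_t)$ for $p\le k$ and all $q\ge 0$, which handles one of the asserted equalities in the chain.

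For the image description, I would unfold Definition~\ref{defkrat}: the natural map $\uOp_{Y_s}\to R\pi_{s*}\Omega^p_{\hat Y_s}$ is a quasi-isomorphism for $p\le k$, hence $\mathbb{H}^q(Y_s,\uOp_{Y_s})\cong H^q(\hat Y_s,\Omega^p_{\hat Y_s})$. Both the Deligne--Du Bois spectral sequence on $Y_s$ and the Hodge-to-de Rham spectral sequence on $\hat Y_s$ degenerate at $E_1$, and $\pi_s^*$ is a morphism of mixed Hodge structures, so by strictness of $F$ (equivalently, exactness of $\Gr_F$ on morphisms of MHS)
$$\uh^{p,q}(Y_s)=\dim\Gr_F^p\,\mathrm{Im}\bigl(\pi_s^*\colon H^{p+q}(Y_s)\to H^{p+q}(\hat Y_s)\bigr),$$
which is the second formula of the corollary.

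Finally, the Hodge and Serre symmetries follow from the purity of $\mathrm{Im}(\pi_s^*)\subset H^{p+q}(\hat Y_s)$ together with the dual form of $k$-rationality. As a sub-Hodge structure of a pure weight-$(p+q)$ structure, $\mathrm{Im}(\pi_s^*)$ satisfies Hodge symmetry $\dim\Gr_F^p=\dim\Gr_F^q$. Invoking the Grothendieck--Saito duality $\bD(\uOp_{Y_s})\simeq\uOnp_{Y_s}[n]$ converts the $k$-rational condition at $\uOp$ ($p\le k$) into the dual quasi-isomorphism $\uOnp_{Y_s}\to R\pi_{s*}\Omega^{n-p}_{\hat Y_s}$; combined with Serre duality on the smooth projective $\hat Y_s$, this yields $\uh^{n-p,n-q}(Y_s)=h^{n-p,n-q}(\hat Y_s)=h^{p,q}(\hat Y_s)=\uh^{p,q}(Y_s)$. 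The analogous identification $\uh^{q,p}(Y_s)=\dim\Gr_F^q\mathrm{Im}(\pi_s^*)$ (provided by $k$-Du Bois for $q\le k$ and by the Saito-dual statement for $q\ge n-k$) combined with the purity of $\mathrm{Im}(\pi_s^*)$ closes the chain of symmetries, and the outer equalities with $h^{q,p}(Y_t)$ and $h^{n-p,n-q}(Y_t)$ are immediate from Hodge symmetry and Serre duality on the smooth fiber $Y_t$. The main obstacle is verifying the self-duality of Definition~\ref{defkrat} under the Grothendieck--Saito pairing, and more delicately handling the middle range of indices where neither the direct $k$-Du Bois condition nor its dual directly applies; there one must rely on strictness of MHS morphisms and the purity of $\mathrm{Im}(\pi_s^*)$ to identify $\uh^{q,p}(Y_s)$ with $\dim\Gr_F^q\mathrm{Im}(\pi_s^*)$.
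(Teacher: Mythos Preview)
Your overall strategy---reduce to $k$-Du Bois via Theorem~\ref{thm-krat}, invoke Corollary~\ref{mainThmcor}, and then prove Serre- and Hodge-type symmetries---matches the paper's. But two of your key steps are incorrect or incomplete.

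First, the claim that Definition~\ref{defkrat} says $\uOp_{Y_s}\to R\pi_{s*}\Omega^p_{\hat Y_s}$ is a quasi-isomorphism is wrong. What $k$-rationality gives (via Lemma~\ref{lemma-onp}) is $\Omega^p_{Y_s}\cong R\pi_{s*}\Omega^p_{\hat Y_s}(\log E)$, not $R\pi_{s*}\Omega^p_{\hat Y_s}$. The latter depends on the choice of resolution (blow-ups add cohomology), so your consequence $\mathbb{H}^q(Y_s,\uOp_{Y_s})\cong H^q(\hat Y_s,\Omega^p_{\hat Y_s})$ cannot hold. Likewise, the ``Grothendieck--Saito duality $\bD(\uOp_{Y_s})\simeq\uOnp_{Y_s}$'' you invoke is not a general fact; for $p\le k$ it is a \emph{consequence} of $k$-rationality plus $k$-Du Bois, not an independent input, and it says nothing about $\uOnp_{Y_s}\to R\pi_{s*}\Omega^{n-p}_{\hat Y_s}$. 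The correct route to the image formula is the one in the proof of Theorem~\ref{cor-sym2}: the isomorphism $\Gr_F^p\Psi^i\colon \Gr_F^pH^i(Y_s)\to \Gr_F^{n-p}H^{2n-i}(Y_s)^\vee$ factors through $\Gr_F^p\pi_s^*$, hence $\Gr_F^p\pi_s^*$ is injective; by strictness this is exactly $\Gr_F^pW_{i-1}H^i(Y_s)=0$, and then $\uh^{p,q}(Y_s)=\dim\Gr_F^p\operatorname{Im}(\pi_s^*)$ follows.

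Second, your argument for $\uh^{p,q}(Y_s)=\uh^{q,p}(Y_s)$ is incomplete precisely where you flag it: in the ``middle range'' $k<q<n-k$ you have no direct control over $\uh^{q,p}$, and purity of $\operatorname{Im}(\pi_s^*)$ alone does not identify $\uh^{q,p}(Y_s)$ with $\dim\Gr_F^q\operatorname{Im}(\pi_s^*)$. The paper sidesteps this entirely via Lemma~\ref{ineq-sym}: the vanishing $\Gr_F^pW_{i-1}H^i(Y_s)=0$ for all $p\le k$ (just established) is \emph{equivalent}, by a counting argument with Hodge--Deligne numbers, to $\uh^{p,i-p}(Y_s)=\uh^{i-p,p}(Y_s)$ for all $p\le k$. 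This is the missing combinatorial step you need; once you have it, the Serre-type symmetry $\uh^{p,q}=\uh^{n-p,n-q}$ is Corollary~\ref{cor-sym1}, and the comparison with $Y_t$ is Proposition~\ref{0.6}.
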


\begin{remark}\label{rem-examples} 
 (1) The Du Bois complex for a (connected) curve $C$ was computed in \cite[Prop. 4.9]{duBois}, \cite[Example 7.23(2)]{PS}:  let $\tilde C$ be the normalization of $C$ and $C^w$ the weak normalization, so that there is a factorization   $\tilde C\to C^w\to C$.  Let $\nu\colon \tilde C \to C$ and $\nu'\colon C^w \to C$ be the corresponding morphisms. Then $\underline\Omega^0_C = \nu'_*\scrO_{C^w}$ and $\underline\Omega^1_C = \nu_*\Omega^1_{\tilde{C}}$. In particular,  $\uh^{1,0}(C)=p_g(C)=g(\tilde C)$, while $\uh^{0,1}(C)=p_a(C^w)$. Thus the only Hodge number  which is always  preserved in flat proper families of curves is  $\uh^{0,0}=1$. Note that $C$   has Du Bois singularities $\iff$ $C^w\cong C$, and $C$ has  rational singularities $\iff$ $C$ is smooth.   However, at least one of the Hodge symmetries $\uh^{1,0}\neq \uh^{0,1}=p_a(C)$ or $\uh^{1,1}\neq\uh^{0,0}=1$ will fail if the singularities of  $C$ are Du Bois but not rational  (depending on $C$ of compact/non-compact type), reflecting the fact that there are no rational singularities in dimension $1$. 
 
 \smallskip
 \noindent (2) Suppose that  $C$ has ordinary double points and let $\tau^\bullet_C$ be the subcomplex of ``torsion differentials" on $C$, i.e.\ those sections of $\Omega^\bullet_C$ which pull back to $0$ on $\tilde{C}$. By \cite[(1.1) and (1.5)]{F83}, $(\Omega^\bullet_C, d)$ and $(\Omega^\bullet_C/\tau^\bullet_C,d)$ are both  resolutions of $\Cee$. In fact, it is easy to see directly that (1) $(\Omega^\bullet_C/\tau^\bullet_C,d)$ is a resolution of $\Cee$ and (2) at an ordinary double point locally defined by $z_1z_2 =0$, $\tau^1_C =\Cee\cdot z_1\,dz_2$, $\tau^2_C = \Cee\cdot dz_1\wedge dz_2$, and $d\colon \tau^1_C \to \tau^2_C$ is an isomorphism. Hence $(\tau^\bullet_C,d)$ is acyclic and $(\Omega^\bullet_C, d)$ is quasi-isomorphic to  $(\Omega^\bullet_C/\tau^\bullet_C,d)$ and thus is a resolution of $\Cee$. In particular the spectral sequence with $E_1$ term   $H^q(C; \Omega^p_C) \implies \mathbb{H}^{p+q}(C; \Omega^\bullet_C) = H^{p+q}(C;\Cee)$  does not degenerate   at $E_1$ as $d=d_1$ is nonzero on $H^0(C; \tau^1_C) \subseteq H^0(\Omega^1_C)$.
 
 \smallskip
 \noindent (3) More generally, if $X$ has normal crossings, then  $\uOb_X \cong \Omega^\bullet_X/\tau^\bullet_X$ (with the trivial filtration), where as above $\tau^\bullet_X$ is the subcomplex of torsion differentials, by e.g.\ \cite[Example 7.23(1)]{PS}. Moreover, by \cite[(1.1)]{F83} and generalizing the explicit calculations of (2) above,  $(\Omega^\bullet_X, d)$ is also a resolution of $\Cee$ and in particular $(\Omega^\bullet_X, d)$ and $(\Omega^\bullet_X/\tau^\bullet_X, d)$ are quasi-isomorphic. For a related result, for a general variety $X$, Guill\'en showed in \cite[III(1.15)]{GNPP} that the homomorphism $\Omega^\bullet_X \to \uOb_X$ always has a section (viewed as a morphism in the derived category). However, these results do not seem to say much about the individual homomorphisms $\Omega^p_X \to \uOp_X$. 
 
 \smallskip
 \noindent (4)   Fourfolds $Y$ with  ADE hypersurface singularities (such as those occurring in \cite{Laza})  are examples of $1$-rational singularities. For such fourfolds,   Corollary \ref{Corkrat} implies that only $\uh^{2,2}(Y)$ can vary in small deformations. Thus,  any smoothing of $Y$ will have finite monodromy  (compare \cite[Cor. 1]{KLS}). 
\end{remark}

A brief description of the contents of this paper is as follows. Section~\ref{section2} deals with some basic results about K\"ahler differentials in the lci case. These include Theorem~\ref{flatness} regarding the flatness of the relative K\"ahler differentials and Proposition~\ref{KDexact} on restricting to a generic hypersurface section.  In Section~\ref{section3}, we give a quick review of the definition and the basic facts about higher Du Bois singularities (following \cite{MOPW}, \cite{JKSY-duBois}, and \cite[\S3]{FL}) and define higher rational singularities.  After these preliminaries, we establish Theorem~\ref{mainThm}. Our argument is close to the original argument (\cite[Lemma 1]{DBJ}) used to establish Theorem \ref{Thm-duBois},  following a suggestion of J. Koll\'ar. Finally, in Section~\ref{section5}, we prove Theorem \ref{thm-krat} following the strategy of \cite{Kovacs99}, and deduce  a consequence about the Hodge numbers of a smoothing  along the lines of Corollary~\ref{mainThmcor}. An appendix section by M. Saito discusses Conjecture~\ref{conjDBrat} in the hypersurface case.

Finally, beyond the conjectures and speculation we have already made, we emphasize the importance of extending these results wherever  possible to the non-lci case. Along these lines, Shen, Venkatesh and Vo have recently posted a preprint \cite{SVV} proposing different definitions of $k$-Du Bois and $k$-rational singularities which agree with the previous ones in the lci case. 

\subsection*{Notations and Conventions} We work in the complex algebraic category (but see Remark \ref{rem-can} for some possible generalizations).   Following our conventions from previous work \cite{FL}, we use $X$ (and $\cX$) in the local context (e.g. statements such as Theorem \ref{flatness} are purely local, and hold in the analytic case as well), whereas $Y$  and $\pi\colon \cY\to S$  are meant to be proper over $\Spec \Cee$ or $S$ respectively (e.g.\ in Theorem \ref{mainThm} where the properness is essential). The scheme or analytic space $X$ is an \textsl{algebraic variety} if it is  reduced and irreducible. In this case, its singular locus $X_{\text{\rm{sing}}}$ is denoted by $\Sigma$. We set $n =\dim X$ and $d =\dim \Sigma$.

\subsection*{Acknowledgement} We have benefited from discussions and correspondence  with J. Koll\'ar, M. Musta\c{t}\u{a}, and M. Popa on higher du Bois singularities while preparing \cite{FL}. The second author had several related discussions with M. Kerr and M. Saito while preparing previous joint work. We also thank J. de Jong, M. Saito and C. Schnell for some further comments related to this paper. After circulating an earlier version of this paper,  M. Musta\c{t}\u{a}, M. Popa informed us of some their recent work (\cite{MP-rat}, \cite{ChenDirksM}) related to Theorem \ref{thm-krat} and Conjecture~\ref{conjDBrat}. We are grateful to them for these communications.  M.  Saito kindly provided us with proofs of Proposition~\ref{hypersurfcase}(ii) and Conjecture~\ref{conjDBrat} in the hypersurface case, and agreed to include those as an appendix to our paper. Finally, we would like the referees for a very carefully reading of the first version of this paper and for many helpful comments.

\section{Some results on K\"ahler and relative K\"ahler differentials}\label{section2} The K\"ahler differentials are coherent sheaves that are determined by certain universal properties, including compatibility with base change. For smooth families $\cY/S$, the proof of the constancy of the Hodge numbers  uses in an essential way the semi-continuity of $h^q(\Omega^p_{Y_t})$, which in turn depends on the flatness of $\Omega^p_{\cY/S}$.  Here, we generalize this key point, noting that, for an lci morphism, $\Omega^p_{\mathcal X/S}$ is flat over $S$ (Theorem \ref{flatness}) for $p$ satisfying a bound depending on the dimension $d$ of the singular locus. Our argument depends essentially on the lci assumption, and it is a consequence of some depth estimates for $\Omega_X^p$ for $X$ with lci singularities due to Greuel.

A second result (Proposition \ref{KDexact}) that follows by related arguments are higher adjunction type results regarding restrictions of K\"ahler differentials to generic hypersurface sections. 
  
\subsection{A flatness result} We begin by recalling some basic notions concerning depth. Recall that, if $R$ is a local ring with maximal ideal $\mathfrak{m}$ and $M$ is an $R$-module, then $\operatorname{depth}M$ is the maximal length of a regular sequence for $M$ \cite[p.\ 120]{Matsumura}. If $\mathcal{F}$ is a coherent sheaf on a complex space $X$ and $x\in X$, let $\operatorname{depth}_x \mathcal{F} = \operatorname{depth} \mathcal{F}_x$, viewed as an $\scrO_{X,x}$-module. A key technical result we will need is then the following theorem  \cite{Scheja}, \cite[Satz 1.2]{Greuel75}:

\begin{theorem}[Scheja]\label{Schejathm} Suppose that $X$ is an analytic space, $A$ a closed analytic subspace, and $\mathcal{F}$ a coherent sheaf on $X$. Let $\rho\colon  H^0(X; \mathcal{F}) \to H^0(X-A; \mathcal{F}|X-A)$ be the restriction map. 
\begin{enumerate}
\item[\rm(i)] If $\operatorname{depth} \mathcal{F}_x \ge \dim A + 1$ for every $x\in X$, the homomorphism 
$\rho$ is injective. 
\item[\rm(ii)] If $\operatorname{depth} \mathcal{F}_x \ge \dim A + 2$ for every $x\in X$, the homomorphism $\rho$  is an isomorphism. \qed 
\end{enumerate}
\end{theorem}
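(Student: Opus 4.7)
The plan is to reformulate the question via local cohomology with support in $A$. The standard long exact sequence
\begin{equation*}
0 \to H^0_A(X;\mathcal{F}) \to H^0(X;\mathcal{F}) \xrightarrow{\rho} H^0(X-A;\mathcal{F}|_{X-A}) \to H^1_A(X;\mathcal{F}) \to H^1(X;\mathcal{F}) \to \cdots
\end{equation*}
immediately reduces the problem to showing $H^i_A(X;\mathcal{F})=0$ for $i<k$, with $k=1$ in part (i) (this gives injectivity of $\rho$) and $k=2$ in part (ii) (then the connecting map $H^0(X-A;\mathcal{F})\to H^1_A(X;\mathcal{F})$ is automatically zero, making $\rho$ bijective). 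So the entire statement is equivalent to a vanishing result for the first few local cohomology groups with support in $A$.

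To produce this vanishing, I would pass from the global groups to stalks through the Grothendieck spectral sequence
\begin{equation*}
E_2^{p,q} = H^p\bigl(A;\underline{H}^q_A(\mathcal{F})\bigr) \Longrightarrow H^{p+q}_A(X;\mathcal{F}),
\end{equation*}
so that it is enough to prove that the local cohomology sheaves $\underline{H}^q_A(\mathcal{F})$ vanish for $q<k$. Since at each point $y$ one has $(\underline{H}^q_A\mathcal{F})_y = H^q_{I_{A,y}}(\mathcal{F}_y)$, where $I_{A,y}\subseteq \scrO_{X,y}$ is the ideal of $A$, the stalkwise vanishing reduces to Grothendieck's classical estimate $H^i_I(M)=0$ for $i<\operatorname{grade}(I,M)$, applied to $M=\mathcal{F}_y$ and $I=I_{A,y}$. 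It therefore suffices to check that $\operatorname{grade}(I_{A,y},\mathcal{F}_y)\ge k$ for every $y\in A$.

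The final step uses the standard grade-depth inequality
\begin{equation*}
\operatorname{grade}(I_{A,y},\mathcal{F}_y) \;\ge\; \operatorname{depth}\mathcal{F}_y - \dim(\scrO_{X,y}/I_{A,y}) \;\ge\; \operatorname{depth}\mathcal{F}_y - \dim A,
\end{equation*}
so the hypothesis $\operatorname{depth}\mathcal{F}_y\ge \dim A+k$ delivers exactly $\operatorname{grade}(I_{A,y},\mathcal{F}_y)\ge k$, completing the argument. The main obstacle is this grade-depth inequality itself. One proves it by taking a maximal $\mathcal{F}_y$-regular sequence in $\mathfrak{m}_y$ and, via a prime-avoidance argument applied inductively while quotienting by parameters of $\scrO_{X,y}/I_{A,y}$, arranging its first $\operatorname{depth}\mathcal{F}_y - \dim(\scrO_{X,y}/I_{A,y})$ entries to lie in $I_{A,y}$. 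These ingredients are all classical, and in the body of the paper Theorem~\ref{Schejathm} is invoked as a black-box citation to \cite{Scheja} and \cite[Satz~1.2]{Greuel75}.
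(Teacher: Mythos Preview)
The paper does not prove Theorem~\ref{Schejathm}; the trailing \qed\ signals that it is quoted as a black box from \cite{Scheja} and \cite[Satz~1.2]{Greuel75}, exactly as you note in your final sentence. So there is no ``paper's own proof'' to compare against, and your proposal should be read as a reconstruction of the classical argument.

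Your reconstruction is correct and is the standard route: the long exact sequence of local cohomology reduces (i) and (ii) to the vanishing of $H^i_A(X;\mathcal{F})$ for $i<k$, the local-to-global spectral sequence reduces this to vanishing of the sheaves $\underline{H}^q_A(\mathcal{F})$, and the stalkwise identification with $H^q_{I_{A,y}}(\mathcal{F}_y)$ reduces to the bound $\operatorname{grade}(I_{A,y},\mathcal{F}_y)\ge k$. One small comment on your last step: the cleanest justification of the inequality
\[
\operatorname{grade}(I_{A,y},\mathcal{F}_y)\ \ge\ \operatorname{depth}\mathcal{F}_y-\dim(\scrO_{X,y}/I_{A,y})
\]
is not quite the rearrangement you sketch, but rather the following induction. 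If $\operatorname{depth}\mathcal{F}_y>\dim(\scrO_{A,y})$, then for every $\mathfrak{p}\in\operatorname{Ass}(\mathcal{F}_y)$ one has $\dim(\scrO_{X,y}/\mathfrak{p})\ge\operatorname{depth}\mathcal{F}_y>\dim(\scrO_{A,y})$ (Ischebeck's inequality), so $I_{A,y}\not\subseteq\mathfrak{p}$; prime avoidance then produces an $\mathcal{F}_y$-regular element in $I_{A,y}$, and one iterates on $\mathcal{F}_y/x\mathcal{F}_y$. This is presumably what you had in mind, but the phrase ``quotienting by parameters of $\scrO_{X,y}/I_{A,y}$'' points to a different (and less direct) mechanism.
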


It is well known that   K\"ahler $p$-differentials on singular spaces can have torsion. For instance this is already the case for $\Omega^1_C$ where $C$ is a nodal curve. However, we have the following \cite[Lemma 1.8]{Greuel75}

\begin{theorem}[Greuel]\label{Greuelthm} Suppose that $X$ is an lci singularity of dimension $n$ and that  $\dim \Sigma\leq d$. Then, for all $x\in X$, $\operatorname{depth}_x \Omega^p_X \geq n-p$ for $p\leq n-d$. More generally, let $f\colon \cX \to S$ be a flat lci morphism of relative dimension $n$ over a smooth base $S$  and let $\cX_{\text{\rm{crit}}}$ denote  the critical locus of $f$, i.e.\ the points of $\cX$ where $f$ is not a smooth morphism. If $\dim S = m$ and the relative dimension of $\cX_{\text{\rm{crit}}}$ is at most $d$ then, for every $x\in \cX$, then $\operatorname{depth}_x \Omega^p_{\cX/S} \geq \dim \cX -p = n+m-p$ for $p\leq n-d$. \qed
\end{theorem}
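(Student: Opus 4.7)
The statement is local on $X$, so the plan is to work in a neighborhood of a closed point $x\in X$ and embed $(X,x)$ as an lci germ in a smooth germ $(Y,x)$ of dimension $N=n+r$ with $X=V(f_1,\ldots,f_r)$ and $f_1,\ldots,f_r$ a regular sequence. The lci hypothesis yields the left-exact conormal sequence
$$0\to I/I^2\to\Omega^1_Y|_X\to\Omega^1_X\to 0,$$
with $I=(f_1,\ldots,f_r)$ and $I/I^2$ free of rank $r$. The strategy is to replace $\Omega^p_X$ by a Koszul-type complex of free $\scrO_X$-modules and then track depth through it, using Scheja's theorem to handle the cohomology supported on $\Sigma$.

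First I would form, for each $p$, the Koszul complex
$$K^\bullet_p:\ \Lambda^p(I/I^2)\to \Lambda^{p-1}(I/I^2)\otimes\Omega^1_Y|_X\to\cdots\to I/I^2\otimes\Omega^{p-1}_Y|_X\to \Omega^p_Y|_X,$$
whose differential is wedge product with the injection $I/I^2\hookrightarrow\Omega^1_Y|_X$. All terms are free $\scrO_X$-modules and hence of maximal depth $n=\dim X$ (as $X$ is Cohen--Macaulay by lci), and the cokernel of the last arrow is $\Omega^p_X$. Away from $\Sigma$, the differentials $df_1,\ldots,df_r$ span a direct summand of $\Omega^1_Y|_X$, so $K^\bullet_p$ becomes a genuine free resolution of $\Omega^p_X$ by the standard Koszul argument; in particular, the higher cohomology sheaves $\mathcal{H}^{-i}(K^\bullet_p)$ for $i>0$ are all supported on $\Sigma$, whose dimension is at most $d$.

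The depth bound should then follow by propagating depth through $K^\bullet_p$ via the standard depth lemma for short exact sequences. Splitting the extended complex $K^\bullet_p\to\Omega^p_X\to 0$ into the usual short exact sequences relating boundaries, cycles, and cohomology, and combining $\operatorname{depth}K^{-i}_p=n$ with the fact that $\operatorname{depth}_x\mathcal{H}^{-i}(K^\bullet_p)\geq n-d$ for supported sheaves (Scheja's theorem, applied to codimension $\geq n-d$ in $X$), one inductively obtains $\operatorname{depth}_x\Omega^p_X\ge n-p$, provided the Koszul complex is short enough relative to the codimension of $\Sigma$, i.e.\ provided $p\le n-d$.

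For the relative statement, I would first observe that a flat lci morphism $f\colon\cX\to S$ over a smooth base of dimension $m$ makes $\cX$ itself lci of dimension $n+m$, with $\cX_{\mathrm{sing}}\subseteq \cX_{\mathrm{crit}}$ of dimension at most $d+m$. Applying the absolute case to $\cX$ yields $\operatorname{depth}_x\Omega^p_\cX\ge n+m-p$ in the range $p\le n-d$. To descend this to $\Omega^p_{\cX/S}$, I would use the filtration on $\Omega^p_\cX$ whose graded quotients are $f^*\Omega^i_S\otimes\Omega^{p-i}_{\cX/S}$: since $f^*\Omega^i_S$ is locally free (as $S$ is smooth), inductive application of the depth lemma on the associated short exact sequences transfers the estimate to $\operatorname{depth}_x\Omega^p_{\cX/S}\ge n+m-p$. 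The main obstacle is the depth bookkeeping through $K^\bullet_p$: the sheaves $\mathcal{H}^{-i}(K^\bullet_p)$ are genuinely nonzero at singular points, and one must show via Scheja that their contribution is harmless precisely in the range $p\le n-d$, where the dimension of $\Sigma$ is small enough relative to the length of the Koszul complex.
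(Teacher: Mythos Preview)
The paper does not prove this theorem; it is quoted from Greuel \cite[Lemma~1.8]{Greuel75} and marked with a \qed. So there is no ``paper's proof'' to compare against, and your proposal is an attempted reconstruction of Greuel's argument.

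Your overall strategy---embed $X$ as an lci in a smooth $Y$, form the Koszul-type complex $K^\bullet_p$ with free terms $\Lambda^a(I/I^2)\otimes\Omega^{p-a}_Y|_X$, and read off the depth bound for $\Omega^p_X$ from a free complex of length $\le p$---is the right shape, and is essentially what Greuel does. However, your depth bookkeeping contains a genuine error. You assert that $\operatorname{depth}_x\mathcal{H}^{-i}(K^\bullet_p)\ge n-d$ ``by Scheja's theorem.'' This is false in two ways. First, a coherent sheaf supported on a closed set of dimension $\le d$ has depth \emph{at most} $d$, not at least $n-d$; in the interesting situation (small $d$) this is the opposite inequality. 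Second, Scheja's theorem runs in the other direction: it deduces injectivity or surjectivity of restriction from a depth hypothesis, and gives no lower bound on depth from a support condition. With $\operatorname{depth}\mathcal{H}^{-i}$ possibly equal to $0$, the short exact sequence $0\to B^{-i}\to Z^{-i}\to \mathcal{H}^{-i}\to 0$ destroys any inductive lower bound on $\operatorname{depth}Z^{-i}$, and your chain of inequalities collapses.

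The repair is not to bound the depth of the $\mathcal{H}^{-i}$ but to show that they vanish: for $p\le n-d$, the complex $K^\bullet_p$ is \emph{acyclic} (a genuine free resolution of $\Omega^p_X$), because the degeneracy locus of the bundle map $I/I^2\hookrightarrow\Omega^1_Y|_X$ is $\Sigma$, and in the Cohen--Macaulay ring $\scrO_{X,x}$ the Jacobian ideal has grade $\ge n-d\ge p$. This is the content of Greuel's argument (via the Lebelt/Buchsbaum--Eisenbud acyclicity criterion for such Koszul-type complexes). Once $K^\bullet_p$ is exact, the depth lemma gives $\operatorname{depth}_x\Omega^p_X\ge n-\min(p,r)\ge n-p$ immediately. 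For the relative statement, your detour through $\Omega^p_{\cX}$ and the filtration with graded pieces $f^*\Omega^i_S\otimes\Omega^{p-i}_{\cX/S}$ runs into the same issue (that filtration is only well-behaved off $\cX_{\mathrm{crit}}$; compare Proposition~\ref{prop4}). It is simpler to repeat the absolute argument verbatim using the relative conormal sequence $0\to I/I^2\to\pi_1^*\Omega^1_A|_\cX\to\Omega^1_{\cX/S}\to 0$ for a local embedding $\cX\subseteq A\times S$.
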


Before stating the next corollary, we fix the following notation which will be used for the rest of the section: If $f\colon \cX \to S$ is a morphism and $s\in S$, we denote by $X_s$  the fiber $f^{-1}(s)$ and by $\Sigma_s$ the singular locus of $X_s$: $\Sigma_s = (X_s)_{\text{\rm{sing}}}$.

\begin{corollary}\label{cor2} Suppose that $f\colon \cX \to S$ is a flat lci morphism of relative dimension $n$ over a smooth base $S$ of dimension $m$, and that  $\dim \Sigma_s\le d$, with $n-d\geq 2k+1$ for some integer $k\geq 1$ and every point $s\in S$.  Let $\cX_{\text{\rm{crit}}}$ denote the critical locus of $f$, i.e.\ the points of $\cX$ where $f$ is not a smooth morphism. Then, for every $x\in \cX$ and $p\leq n-d$, 
$$ \operatorname{depth}_x \Omega^p_{\cX/S} \geq d+m +2,$$
and, for all $p\leq k$ and every open subset $\cU$ of $\cX$, the restriction map
$$H^0(\cU; \Omega^p_{\cX/S}|\cU) \to H^0(\cU- \cX_{\text{\rm{crit}}}; \Omega^p_{\cX/S}|\cU- \cX_{\text{\rm{crit}}})$$
is an isomorphism.
\end{corollary}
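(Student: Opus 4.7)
The plan is to derive both assertions as direct consequences of the relative Greuel depth estimate (Theorem \ref{Greuelthm}) and Scheja's extension theorem (Theorem \ref{Schejathm}). For the depth bound, I would apply the relative half of Theorem \ref{Greuelthm} to the flat lci morphism $f\colon \cX\to S$ of relative dimension $n$ over a smooth $m$-dimensional base, whose critical locus $\cX_{\text{\rm{crit}}}$ has relative dimension at most $d$. This gives
$$\operatorname{depth}_x \Omega^p_{\cX/S} \;\ge\; \dim \cX - p \;=\; n+m-p$$
for every $x\in\cX$ and every $p\le n-d$. In the range $p\le k$, the hypothesis $n-d\ge 2k+1$ together with $k\ge 1$ yields $n-k\ge d+k+1\ge d+2$, so
$$n+m-p \;\ge\; n+m-k \;\ge\; d+m+2,$$
which is the asserted lower bound on depth.

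For the isomorphism, I would apply Theorem \ref{Schejathm}(ii) to the coherent sheaf $\mathcal{F}=\Omega^p_{\cX/S}|\cU$ on the open subset $\cU\subseteq\cX$, with closed analytic subspace $A=\cU\cap \cX_{\text{\rm{crit}}}$. Since $\cX_{\text{\rm{crit}}}$ has relative dimension at most $d$ over $S$ and $\dim S = m$, we get $\dim A\le d+m$. The depth estimate established in the previous step then gives, at every point $x\in\cU$,
$$\operatorname{depth}_x \mathcal F \;\ge\; d+m+2 \;\ge\; \dim A + 2,$$
and Scheja's theorem (ii) applies to conclude that the restriction map $H^0(\cU;\mathcal F) \to H^0(\cU-\cX_{\text{\rm{crit}}}; \mathcal F|\cU-\cX_{\text{\rm{crit}}})$ is an isomorphism.

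The proof is essentially just dimension bookkeeping, and there is no substantive obstacle beyond correctly invoking the two black-box results already quoted in this section. The key observation is that the numerical hypothesis $n-d\ge 2k+1$ is calibrated precisely so that Greuel's bound $n+m-p$ in the range $p\le k$ exceeds $\dim \cX_{\text{\rm{crit}}}$ by at least $2$, which is exactly the hypothesis needed to obtain both injectivity and surjectivity in Scheja's theorem. I would not expect any serious difficulty to arise; the mild point to verify is that Greuel's relative statement applies verbatim in our setup (flat lci morphism, smooth base), which it does.
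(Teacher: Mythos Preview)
Your proposal is correct and follows essentially the same argument as the paper: apply Greuel's relative depth estimate to get $\operatorname{depth}_x\Omega^p_{\cX/S}\ge n+m-p$, use the numerical hypothesis $n-d\ge 2k+1$ (with $k\ge 1$) to bound this below by $d+m+2\ge \dim\cX_{\text{crit}}+2$ in the range $p\le k$, and then invoke Scheja's theorem~(ii). The only cosmetic difference is that the paper makes explicit the observation that $p\le k$ forces $p\le n-d$ (so that Greuel applies), which you handle implicitly by first stating the Greuel bound on the full range $p\le n-d$ and then specializing.
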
 
\begin{proof}  By assumption, $\dim \cX_{\text{\rm{crit}}} \leq d+m$. Note that $n-d \geq 2k+1 \geq k$, and hence, if $p\leq k$, then $p\leq n-d$. Thus Theorem~\ref{Greuelthm} implies that, for all $x\in \cX$,  
$$ \operatorname{depth}_x \Omega^p_{\cX/S} \geq n+m-p \geq n-k+m \geq d+m +k + 1 \geq d+m+ 2\geq \dim \cX_{\text{\rm{crit}}} +2. $$
Then $H^0(\cU; \Omega^p_{\cX/S}|\cU) \to H^0(\cU- \cX_{\text{\rm{crit}}}; \Omega^p_{\cX/S}|\cU- \cX_{\text{\rm{crit}}})$ is an isomorphism, by Theorem~\ref{Schejathm}(ii). 
\end{proof}

\begin{proposition}\label{prop4} Suppose that $f\colon \cX \to S$ is a flat lci morphism of relative dimension $n$ over a smooth base $S$ of dimension $m$, and that  $\dim \Sigma_s\leq d$, with $n-d\geq 2k+1$ for some integer $k\geq 1$ and every point $s\in S$. Suppose that $\cX \subseteq A\times S$, where $A$ is smooth, and let $I$ be the defining ideal of $\cX$ in $A\times S$. Then there exists a filtration of $\bigwedge ^k\pi_1^*\Omega_A^1|\cX = \pi_1^*\Omega_A^k|\cX$ by coherent subsheaves  $\mathcal{K}^a$, $0\leq a\leq k$, such that $\mathcal{K}^0 = \pi_1^*\Omega_A^k|\cX$,  $\operatorname{depth}_x\mathcal{K}^a \geq d+m+2$ for all $x\in \cX$, and, for all $a\geq 0$,  
$$\mathcal{K}^a/\mathcal{K}^{a+1} \cong \bigwedge ^a(I/I^2) \otimes  \Omega_{\cX/S}^{k-a}.$$
\end{proposition}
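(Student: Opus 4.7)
My plan is the following. First, since $f$ is an lci morphism and the projection $A\times S\to S$ is smooth, the inclusion $\cX\hookrightarrow A\times S$ is a regular embedding over $S$. Consequently the relative conormal sequence is a short exact sequence
$$0\to I/I^2\to \pi_1^*\Omega^1_A|_\cX\to \Omega^1_{\cX/S}\to 0$$
of coherent $\scrO_\cX$-modules in which $I/I^2$ is locally free. This is the foundation that the entire construction rests upon.

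Second, I would build the filtration $\mathcal{K}^\bullet$ as a standard Koszul-type filtration: set $\mathcal{K}^a$ to be the image of the natural wedging map
$$\bigwedge^a(I/I^2)\otimes_{\scrO_\cX}\bigwedge^{k-a}(\pi_1^*\Omega^1_A|_\cX)\longrightarrow \bigwedge^k(\pi_1^*\Omega^1_A|_\cX).$$
Then $\mathcal{K}^0=\pi_1^*\Omega_A^k|_\cX$, $\mathcal{K}^{a+1}\subseteq \mathcal{K}^a$, and (implicitly) $\mathcal{K}^{k+1}=0$. There is a canonical surjection
$$\bigwedge^a(I/I^2)\otimes \Omega^{k-a}_{\cX/S}\twoheadrightarrow \mathcal{K}^a/\mathcal{K}^{a+1}$$
induced by the quotient $\pi_1^*\Omega^1_A|_\cX\twoheadrightarrow \Omega^1_{\cX/S}$, because wedging any additional factor from $I/I^2$ lands in $\mathcal{K}^{a+1}$. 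That this surjection is an isomorphism can be checked locally: since $I/I^2$ is locally free, the conormal sequence splits locally, yielding a direct sum decomposition $\bigwedge^k(\pi_1^*\Omega^1_A|_\cX)\cong \bigoplus_{a=0}^k \bigwedge^a(I/I^2)\otimes \Omega^{k-a}_{\cX/S}$ compatible with the filtration.

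Third, I would establish the depth bound by downward induction on $a$. For the base case $a=k$, the sheaf $\mathcal{K}^k=\bigwedge^k(I/I^2)$ is locally free, so its depth at every point equals $\dim \cX=n+m$, which is $\geq d+m+2$ since $n-d\geq 2k+1\geq 3$. For the inductive step, Theorem~\ref{Greuelthm} applies in the range $p=k-a\leq k\leq n-d$ and gives
$$\operatorname{depth}_x\Omega^{k-a}_{\cX/S}\geq n+m-(k-a)\geq n-k+m\geq d+m+2,$$
the last inequality using $n-d\geq 2k+1\geq k+2$ for $k\geq 1$. Since tensoring by the locally free sheaf $\bigwedge^a(I/I^2)$ preserves depth, the graded piece $\mathcal{K}^a/\mathcal{K}^{a+1}$ also has depth $\geq d+m+2$ at every $x$. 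Applying the depth lemma $\operatorname{depth}(M)\geq \min(\operatorname{depth}(M'),\operatorname{depth}(M''))$ to the short exact sequence
$$0\to \mathcal{K}^{a+1}\to \mathcal{K}^a\to \mathcal{K}^a/\mathcal{K}^{a+1}\to 0$$
propagates the bound from $\mathcal{K}^{a+1}$ (known by induction) to $\mathcal{K}^a$.

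I do not anticipate a serious obstacle: once the lci hypothesis provides the exact conormal sequence with locally free kernel, the Koszul filtration and its graded pieces are a formal construction, and the depth estimate reduces to the arithmetic of combining Theorem~\ref{Greuelthm} with the numerical hypothesis $n-d\geq 2k+1$. The only mildly delicate point is the local splitting argument verifying that the natural surjection onto $\mathcal{K}^a/\mathcal{K}^{a+1}$ is indeed an isomorphism.
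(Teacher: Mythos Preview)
Your filtration and depth induction are set up correctly, and the overall architecture matches the paper's approach. However, there is a genuine gap in your second step. You write: ``since $I/I^2$ is locally free, the conormal sequence splits locally.'' This is false. A short exact sequence $0\to E\to F\to G\to 0$ of coherent sheaves splits locally when the \emph{quotient} $G$ is locally free, not when the subsheaf $E$ is. Here the quotient is $\Omega^1_{\cX/S}$, which fails to be locally free precisely along $\cX_{\text{crit}}$. So your direct-sum decomposition of $\bigwedge^k(\pi_1^*\Omega^1_A|_\cX)$ is unavailable at the singular points, and the injectivity of the surjection $\bigwedge^a(I/I^2)\otimes\Omega^{k-a}_{\cX/S}\twoheadrightarrow \mathcal{K}^a/\mathcal{K}^{a+1}$ remains unproved there.

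The fix is to use the depth estimate you already have, together with Scheja's theorem (Theorem~\ref{Schejathm}), in place of the nonexistent splitting. Your surjection $\varphi_a$ is an isomorphism over $\cX-\cX_{\text{crit}}$ (where the splitting \emph{does} hold), so $\ker\varphi_a$ is supported on $\cX_{\text{crit}}$, which has dimension at most $d+m$. Since $\operatorname{depth}_x\bigl(\bigwedge^a(I/I^2)\otimes\Omega^{k-a}_{\cX/S}\bigr)\geq d+m+2$ by Corollary~\ref{cor2}, Theorem~\ref{Schejathm}(i) forces any section supported on $\cX_{\text{crit}}$ to vanish, whence $\ker\varphi_a=0$. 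This is essentially how the paper proceeds: it never invokes a local splitting, but instead shows $\mathcal{K}^a/\mathcal{K}^{a+1}$ is torsion free (using the inductive depth bound on $\mathcal{K}^{a+1}$) and then identifies it with $\bigwedge^a(I/I^2)\otimes\Omega^{k-a}_{\cX/S}$ via the depth/extension argument across $\cX_{\text{crit}}$.
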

\begin{proof}The proof is by induction on $k$. The case $k=1$ is the conormal sequence
$$0 \to I/I^2 \to \pi_1^*\Omega_A^1|\cX \to \Omega_{\cX/S}^1 \to 0,$$
where we let $\mathcal{K}^1$ be the image of $I/I^2$. In general, for all $0\leq a\leq k$, let $\mathcal{K}^a$ be the image of $\bigwedge ^a(I/I^2) \otimes \pi_1^*\Omega_A^{k-a}|\cX$ in $\pi_1^*\Omega_A^k|\cX$. Thus $\{\mathcal{K}^a\}$ is the usual Koszul filtration, and the proposition is clear over all points of $\cX- \cX_{\text{\rm{crit}}}$. Moreover, $\mathcal{K}^k$ is either $0$ or $\bigwedge^k(I/I^2)$.   By hypothesis $k < n=\operatorname{rank}\Omega_{\cX/S}^1$. Let $r =\operatorname{rank}I/I^2$. We can clearly assume that $r\ge 1$.   If $r\geq k$, then $\mathcal{K}^k =\bigwedge^k(I/I^2)$. If $r< k$, then $\mathcal{K}^k =0$, and the largest $p$ such that $\mathcal{K}^p\neq 0$ is $\mathcal{K}^r$, the image of $\bigwedge^r(I/I^2) \otimes \pi_1^*\Omega_A^{k-r}|\cX$. In this case, the image of $\bigwedge^r(I/I^2) \otimes(I/I^2) \otimes \pi_1^*\Omega_A^{k-r-1}|\cX$ in  $\bigwedge^r(I/I^2) \otimes \pi_1^*\Omega_A^{k-r}|\cX$ maps to $0$ in $\pi_1^*\Omega_A^k|\cX$.  By the inductive hypothesis, the sequence 
$$\bigwedge^r(I/I^2) \otimes(I/I^2) \otimes \pi_1^*\Omega_A^{k-r-1}|\cX \to  \bigwedge^r(I/I^2) \otimes\pi_1^*\Omega_A^{k-r}|\cX \to \bigwedge^r(I/I^2) \otimes\Omega_{\cX/S}^{k-r} $$
is exact, so  there is an induced map $\varphi_r\colon \bigwedge^r(I/I^2) \otimes \Omega_{\cX/S}^{k-r}\to \pi_1^*\Omega_A^k|\cX$. Then the image of $\varphi_r$ is contained in the torsion free sheaf $\mathcal{K}^r$ and is equal to $\mathcal{K}^r$ over $\cX- \cX_{\text{\rm{crit}}}$. Moreover $\varphi_r$ is injective over $\cX- \cX_{\text{\rm{crit}}}$. By the inductive hypothesis on $k$, 
$$\operatorname{depth}_x\bigwedge^r(I/I^2) \otimes \Omega_{\cX/S}^{k-r} \geq d+m+2,$$ 
and hence $\im \varphi_r = \mathcal{K}^r$,  and  $\varphi_r \colon \bigwedge^r(I/I^2) \otimes \Omega_{\cX/S}^{k-r} \cong \mathcal{K}^r = \mathcal{K}^r/\mathcal{K}^{r+1}$ is an isomorphism. 

For a general $a$, there is a commutative diagram  
$$\begin{CD}
\scriptstyle 0 @>>> \scriptstyle\mathcal{J} @>>> \scriptstyle \bigwedge ^a(I/I^2) \otimes \pi_1^*\Omega_A^{k-a}|\cX @>>>\scriptstyle \bigwedge ^a(I/I^2) \otimes  \Omega_{\cX/S}^{k-a} @>>>
\scriptstyle0\\
@. @. @VVV @VV{\varphi_a}V @.\\
\scriptstyle0 @>>> \scriptstyle\mathcal{K}^{a+1} @>>> \scriptstyle\mathcal{K}^a @>>>\scriptstyle \mathcal{K}^a/\mathcal{K}^{a+1} @>>>\scriptstyle 0,
\end{CD}$$
where by definition $\mathcal{J} = \Ker\{\bigwedge ^a(I/I^2) \otimes \pi_1^*\Omega_A^{k-a}|\cX \to \bigwedge ^a(I/I^2) \otimes  \Omega_{\cX/S}^{k-a}\}$ and $\varphi_a$, which exists and is an isomorphism over $\cX - \cX_{\text{\rm{crit}}}$, is yet to be constructed over all of $\cX$.

For a fixed $k$, arguing by descending induction on $a$, and starting either at $k$ or at $r$, we can assume that  $\operatorname{depth}_x\mathcal{K}^{a+1} \geq d+m+2$. Moreover, $\mathcal{K}^{a+1}|\cX - \cX_{\text{\rm{crit}}}$ is a subbundle of $\mathcal{K}^a|\cX - \cX_{\text{\rm{crit}}}$, so the torsion subsheaf of $\mathcal{K}^a/\mathcal{K}^{a+1}$ is supported on $\cX_{\text{\rm{crit}}}$. If $\sigma$ is a section of $\mathcal{K}^a$ over some open subset $\cU$ and $\sigma|V \in \mathcal{K}^{a+1}$ for an open subset $\mathcal{V}=\cU-W$ of $\cU$, where $W$ is an analytic subset of $\cU$, then we may assume that $W = \cX_{\text{\rm{crit}}}$. Since $\mathcal{K}^a$ is torsion free and $\operatorname{depth}_x\mathcal{K}^{a+1} \geq d+m+2$, $\sigma \in \mathcal{K}^{a+1}(\cU)$. Thus $\mathcal{K}^a/\mathcal{K}^{a+1}$ is torsion free.

Let $\mathcal{I}$  be the kernel of the   map
$$\bigwedge ^a(I/I^2) \otimes \pi_1^*\Omega_A^{k-a}|\cX  \to \mathcal{K}^a/\mathcal{K}^{a+1}.$$
Since $\mathcal{K}^a/\mathcal{K}^{a+1}$ is torsion free, a standard argument shows that, for every open subset $\cU$ of $\cX$, the map 
$$H^0(\cU; \mathcal{I}|\cU) \to H^0(\cU- \cX_{\text{\rm{crit}}}; \mathcal{I}|\cU- \cX_{\text{\rm{crit}}})$$
is an isomorphism. 
Over $\cX - \cX_{\text{\rm{crit}}}$, there is an isomorphism $\mathcal{I}|\cX - \cX_{\text{\rm{crit}}} \cong  \mathcal{J}|\cX - \cX_{\text{\rm{crit}}}$. 
 Since$\bigwedge ^a(I/I^2) $ is locally free, $\operatorname{depth}_x\mathcal{J}\geq \operatorname{depth}_x\bigwedge ^a(I/I^2) \otimes  \Omega_{\cX/S}^{k-a} \geq d+m+2$ by Corollary~\ref{cor2}.   In particular,  
 $$H^0(\cU; \mathcal{J}|\cU) \to H^0(\cU- \cX_{\text{\rm{crit}}}; \mathcal{J}|\cU- \cX_{\text{\rm{crit}}})$$
is also an isomorphism.  Thus, $\mathcal{J}=\mathcal{I}$, and there is an induced injective homomorphism $\varphi_a$ as in the diagram.   Since $\varphi_a$ is an isomorphism over $\cX- \cX_{\text{\rm{crit}}}$ and  $\operatorname{depth}_x\bigwedge ^a(I/I^2) \otimes  \Omega_{\cX/S}^{k-a} \geq d+m+2$,  $\varphi_a$ is an isomorphism. Finally, since both $\operatorname{depth}_x\mathcal{K}^{a+1} \geq d+m+2$ and $\operatorname{depth}_x\mathcal{K}^a/\mathcal{K}^{a+1} \geq d+m+2$, $\operatorname{depth}_x\mathcal{K}^a \geq d+m+2$ as well.
\end{proof}

A key technical step in establishing Theorem~\ref{mainThm}, where the lci assumption seems crucial, is the following:  

\begin{theorem}\label{flatness}  Suppose that $f\colon \cX \to S$ is a flat lci morphism, where $S$ is an arbitrary  base space. Let $s\in S$ and suppose that $\codim_{X_s}  \Sigma_s \geq 2k+1$. Then, possibly after replacing $S$ by a neighborhood of $s$,  the sheaf of relative differentials $\Omega^p_{\cX/S}$ is flat over $S$ for all $p\leq k$. For $p= 0,1$, $\Omega^p_{\cX/S}$ is flat over $S$ with no assumption on the dimension of the singular locus.
\end{theorem}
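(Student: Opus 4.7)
The plan is to argue by induction on $p$. For the base case $p = 0$, $\Omega^0_{\cX/S} = \scrO_\cX$ is $S$-flat by hypothesis. For $p = 1$ (also not requiring the codimension assumption), I work locally with a regular embedding $\cX \hookrightarrow A \times S$ given by the lci hypothesis, so that the conormal ideal sheaf $I/I^2$ is locally free on $\cX$. In the conormal sequence $0 \to I/I^2 \to \pi_1^*\Omega^1_A|\cX \to \Omega^1_{\cX/S} \to 0$, the first two terms are locally free on $\cX$, hence $S$-flat. Restricting to any fiber $X_s$ gives the conormal sequence for the lci inclusion $X_s \hookrightarrow A$, which remains exact, so $\operatorname{Tor}_1^{\scrO_S}(\Omega^1_{\cX/S}, \Cee(s)) = 0$ and $\Omega^1_{\cX/S}$ is $S$-flat by the local criterion of flatness.

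For the inductive step, fix $p$ with $2 \le p \le k$ and assume the result for all $q < p$. After shrinking $S$ around $s$ so that the codimension hypothesis holds in a neighborhood, Proposition~\ref{prop4} applies and yields a filtration $\mathcal{K}^0 \supset \mathcal{K}^1 \supset \cdots \supset \mathcal{K}^p \supset 0$ of $\mathcal{K}^0 = \pi_1^*\Omega^p_A|\cX$ with graded pieces $\mathcal{K}^a/\mathcal{K}^{a+1} \cong \bigwedge^a(I/I^2) \otimes \Omega^{p-a}_{\cX/S}$. For $a \ge 1$, this graded piece is $S$-flat since $\bigwedge^a(I/I^2)$ is locally free on $\cX$ and $\Omega^{p-a}_{\cX/S}$ is $S$-flat by the inductive hypothesis. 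A descending induction on $a$, starting from the locally free $\mathcal{K}^p \cong \bigwedge^p(I/I^2)$, then shows each $\mathcal{K}^a$ with $a \ge 1$ is $S$-flat, as an iterated extension of $S$-flat sheaves. Applying Proposition~\ref{prop4} also to the fiber $X_s$ (as an lci family over $\Spec \Cee$, whose codimension hypothesis is exactly the given one) yields an analogous filtration $\{\mathcal{K}^a_s\}$ of $\Omega^p_A|_{X_s}$; the 5-lemma applied to the base-changed sequence $0 \to \mathcal{K}^{a+1} \otimes \Cee(s) \to \mathcal{K}^a \otimes \Cee(s) \to (\mathcal{K}^a/\mathcal{K}^{a+1}) \otimes \Cee(s) \to 0$ (exact because the graded piece is $S$-flat) then gives $\mathcal{K}^a \otimes_{\scrO_S} \Cee(s) \cong \mathcal{K}^a_s$ for each $a \ge 1$.

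To conclude, I consider the short exact sequence $0 \to \mathcal{K}^1 \to \mathcal{K}^0 \to \Omega^p_{\cX/S} \to 0$: both $\mathcal{K}^0$ and $\mathcal{K}^1$ are $S$-flat, with base changes $\mathcal{K}^0 \otimes \Cee(s) = \Omega^p_A|_{X_s}$ and $\mathcal{K}^1 \otimes \Cee(s) = \mathcal{K}^1_s$. The Tor long exact sequence then yields
\[ \operatorname{Tor}_1^{\scrO_S}(\Omega^p_{\cX/S}, \Cee(s)) = \Ker\bigl(\mathcal{K}^1_s \to \Omega^p_A|_{X_s}\bigr) = 0, \]
since the latter map is the inclusion coming from the fiber filtration on $X_s$. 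By the local criterion of flatness, $\Omega^p_{\cX/S}$ is $S$-flat, completing the induction.

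The main obstacle is establishing the base-change compatibility $\mathcal{K}^a \otimes_{\scrO_S} \Cee(s) \cong \mathcal{K}^a_s$, since the $\mathcal{K}^a$ are defined as images of maps between locally free sheaves, and image formation is not generally compatible with base change. The key is that the graded pieces are $S$-flat (so no $\operatorname{Tor}_1$ contributions arise at each step of the filtration), which lets the 5-lemma propagate the identification inductively down the filtration. Once this compatibility is in hand, the final step reduces to the fact that the fiber filtration is, by its very construction, an honest filtration with injective inclusions.
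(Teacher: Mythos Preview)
Your proof is correct and follows essentially the same route as the paper: the conormal sequence for $p=1$, then Proposition~\ref{prop4} to build the filtration $\{\mathcal{K}^a\}$, flatness of the graded pieces by induction, base-change compatibility $\mathcal{K}^a\otimes\Cee(s)\cong\mathcal{K}^a_s$ via descending induction, and the local criterion of flatness applied to $\mathcal{K}^1\hookrightarrow\mathcal{K}^0$. The one omission is that Proposition~\ref{prop4} is stated only for a \emph{smooth} base $S$, whereas the theorem allows $S$ arbitrary; the paper handles this by first pulling back from the versal deformation (so that flatness and wedge products are preserved under base change) to reduce to smooth $S$. You should insert this reduction before invoking Proposition~\ref{prop4}.
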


\begin{remark} For $k\geq 2$, at least some assumption is needed for the conclusions of Theorem~\ref{flatness} to hold. For example, if $X$ has simple normal crossings singularities, say $X$ is the product of a nodal curve $\Spec \Cee[x,y]/(xy)$ with a smooth manifold of dimension $n-1$ and $\cX$ is the standard smoothing over $\mathbb{A}^1$ given by $xy=t$, then  $\operatorname{depth}_x \Omega^1_{\cX/\mathbb{A}^1} = n$ at a singular point and $\Omega^2_{\cX/\mathbb{A}^1}$ has torsion in the fiber over the singular fiber at $t=0$, hence is not flat over $\mathbb{A}^1$.

In the same spirit, with respect to Theorem~\ref{mainThm}, suppose that $\pi\colon \cC \to \Delta$ is a family of smooth projective curves acquiring a single ordinary double point over $0$, with the same local picture as above (so that $\cC$ is also smooth). Then   the sheaf  $\Omega^1_{\cC/\Delta}$ of relative K\"ahler differentials  is flat over $\Delta$ either by the case $p=1$ of Theorem~\ref{flatness} or by the direct computation that $\Omega^1_{\cC/\Delta}$ is isomorphic in a neighborhood of the singular point $p\in \cC$ to the maximal ideal $\frak{m}_p$, and in particular is torsion free over $\scrO_\Delta$. Let $C_t$ denote  the fiber over $t$.  If $C_0$ is reducible, the values of $h^0(\Omega^1_{C_t})$ and $h^1(\Omega^1_{C_t})$ jump up at $t=0$. Hence $R^1\pi_*\Omega^1_{\cC/\Delta}$ has torsion at $0$, and in particular is not locally free. This follows from very general results about cohomology and base change. In our situation, since $\Delta$ is smooth of dimension one, it follows directly by applying $R^i\pi_*$ to the exact sequence
$$0 \to \Omega^1_{\cC/\Delta} \xrightarrow{\times t}   \Omega^1_{\cC/\Delta} \to \Omega^1_{C_0} \to 0$$
to see that  the dimension of the $\scrO_\Delta$-module $R^1\pi_*\Omega^1_{\cC/\Delta}/tR^1\pi_*\Omega^1_{\cC/\Delta}$ is larger than the  rank of $R^1\pi_*\Omega^1_{\cC/\Delta}$.
\end{remark}

\begin{proof}[Proof of Theorem~\ref{flatness}]It suffices to consider the case $p=k$. Since every deformation is (locally) pulled back from the versal deformation, by standard properties of flatness and wedge product under base change we can assume that $S$ is smooth. First, with no assumption on the singular locus,   $\scrO_{\cX}$ is flat over $S$ by assumption. To see that $\Omega^1_{\cX/S}$ is flat over $S$, again with no assumption on the singular locus, we have   the conormal sequence
$$0\to I/I^2 \xrightarrow{u} \pi_1^*\Omega^1_A|\cX \to \Omega^1_{\cX/S} \to 0.$$
Let $I_s$ be the ideal of $X_s$ in $A$. By the lci assumption, $I_s = I \otimes \scrO_{S,s}/\mathfrak{m}_s$. Then the conormal sequence for $\cX/S$ becomes the corresponding sequence for $\Omega^1_{X_s}$ after tensoring with $\scrO_{S,s}/\mathfrak{m}_s$, namely
$$0\to I_s/I_s^2 \xrightarrow{\bar{u}}  \Omega^1_A|X_s  \to \Omega^1_{X_s} \to 0.$$
  In particular, $\bar{u}$ is injective. Hence $\Omega_{\cX/S}^1 =\operatorname{Coker} u$ is flat over $S$ by the local criterion of flatness \cite[(20.E) pp.\ 150--151]{Matsumura}. 

For $k\geq 2$, by induction on $k$ and descending induction on $a$, $\mathcal{K}^{a+1}$ and $\mathcal{K}^a/\mathcal{K}^{a+1}$ are flat over $S$ for all $a\geq 1$, and hence so is $\mathcal{K}^a$.  Let $\mathcal{K}^a_s$ be the image of $\bigwedge ^a(I_s/I_s^2) \otimes  \Omega_A^{k-a}|X_s$ in $ \Omega_A^k|X_s$.  Proposition~\ref{prop4} applied to the case $S =$ pt implies that $\mathcal{K}_s^a/\mathcal{K}_s^{a+1} \cong \bigwedge ^a(I_s/I_s^2) \otimes  \Omega_{X_s}^{k-a}$. We have a commutative diagram
$$\begin{CD}
\scriptstyle0 @>>> \scriptstyle\mathcal{K}^{a+1}\otimes (\scrO_{S,s}/\mathfrak{m}_s) @>>> \scriptstyle\mathcal{K}^a\otimes (\scrO_{S,s}/\mathfrak{m}_s) @>>> \scriptstyle \mathcal{K}^a/\mathcal{K}^{a+1}\otimes (\scrO_{S,s}/\mathfrak{m}_s) @>>> \scriptstyle 0\\
@. @VVV @VVV @VV{\cong}V @.\\
\scriptstyle0 @>>> \scriptstyle\mathcal{K}_s^{a+1} @>>> \scriptstyle\mathcal{K}_s^a @>>> \scriptstyle \mathcal{K}_s^a/\mathcal{K}_s^{a+1} @>>> \scriptstyle 0
\end{CD}$$
Here, the top row is exact for $a\geq 1$ by the flatness of $\mathcal{K}^a/\mathcal{K}^{a+1}$ and induction on $k$. 
By descending induction on $a$, the above diagram implies
that the natural map $\mathcal{K}^a\otimes (\scrO_{S,s}/\mathfrak{m}_s)\to \mathcal{K}_s^a$ is an isomorphism for all $a\geq 1$. In particular, for $a=1$, we have the map $u\colon \mathcal{K}^1 \to \pi_1^*\Omega_A^k|\cX$ with cokernel $\Omega_{\cX/S}^k$, and the reduction $\bar{u}$ of $u$ mod $\mathfrak{m}_s$ is the natural inclusion $\mathcal{K}_s^1 \to \Omega_A^k|X_s$. Since $\bar{u}$ is injective,  $\Omega_{\cX/S}^k =\operatorname{Coker} u$ is flat over $S$ by the local criterion of flatness as above. 
This completes the proof of Theorem~\ref{flatness}.
\end{proof}

\subsection{K\"ahler differentials and generic hyperplane sections}\label{S-Bertini}

\begin{proposition}\label{KDexact} Let $X$ be a reduced local complete intersection with $X_{\text{\rm{sing}}}= \Sigma$ and let $H$ be a Cartier divisor on $X$ which is a generic element of a base point free linear system, with ideal sheaf $\scrO_X(-H)$. If $\codim \Sigma \ge 2k-1$, then, setting $\scrO_H(-H) =\scrO_X(-H)|H$, we have 
\begin{equation*}
0 \to \Omega^{p-1}_H \otimes  \scrO_H(-H) \to \Omega^p_X|H \to \Omega^p_H \to 0.\tag*{$(*)_p$}
\end{equation*}
 is  exact for $p \le k$.
 \end{proposition}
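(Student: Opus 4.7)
My plan is to realize $(*)_p$ as a globalization of the standard relative cotangent sequence for a smooth divisor in a smooth variety, verify exactness at the two right-hand terms by a formal computation, and then force injectivity on the left using Greuel's depth bound (Theorem~\ref{Greuelthm}) together with Bertini.

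First I would construct the left-hand morphism $\alpha\colon \Omega^{p-1}_H\otimes \scrO_H(-H)\to \Omega^p_X|H$ explicitly. Locally, choose a defining equation $f$ for $H$ in $X$, so that $\scrO_H(-H)$ is generated by the class $\bar f$; set $\alpha(\omega\otimes \bar f)=df\wedge \widetilde\omega|H$ for any lift $\widetilde\omega\in \Omega^{p-1}_X$ of $\omega$. Two lifts differ by an element of $f\Omega^{p-1}_X+df\wedge \Omega^{p-2}_X$, both of which are killed after wedging with $df$ and restricting to $H$; and a change of local generator $f\mapsto uf$ gives $df\mapsto u\,df+f\,du$, so $df|H\mapsto u|H\cdot df|H$, and $\alpha$ patches to a global morphism. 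The right-hand surjection $\Omega^p_X|H\twoheadrightarrow \Omega^p_H$ is just the $p$-th exterior power of the conormal quotient $\Omega^1_X|H\twoheadrightarrow \Omega^1_H$, and the composition $\alpha$ with it is zero by construction.

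Exactness at $\Omega^p_X|H$ is then purely formal: since the kernel of $\Omega^1_X|H\twoheadrightarrow \Omega^1_H$ is the $\scrO_H$-submodule generated by $df|H$, the kernel of the induced surjection $\bigwedge^p\Omega^1_X|H\twoheadrightarrow \bigwedge^p\Omega^1_H=\Omega^p_H$ equals $df|H\wedge \Omega^{p-1}_X|H$, and this coincides with the image of $\alpha$ because $\Omega^{p-1}_X|H\twoheadrightarrow \Omega^{p-1}_H$ is surjective. The surjectivity on the right is tautological.

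The main point---and the anticipated obstacle---is injectivity of $\alpha$. Over $X\setminus \Sigma$, Bertini (applied to the base-point-free linear system containing $H$) guarantees that $H\cap (X\setminus \Sigma)$ is a smooth divisor in the smooth variety $X\setminus \Sigma$, so $(*)_p$ restricted there is the classical wedge of the conormal SES and $\alpha$ is injective. Hence $\ker\alpha$ is supported on the proper closed subset $Z:=H\cap \Sigma\subseteq H$. Since $H$ is Cartier in the lci $X$, $H$ is itself lci of dimension $n-1$, and Bertini further gives $H_{\text{\rm{sing}}}\subseteq Z$, so $\dim H_{\text{\rm{sing}}}\le d-1$. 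Greuel's estimate (Theorem~\ref{Greuelthm}) applied to $H$ then yields $\operatorname{depth}_x\Omega^{p-1}_H\ge (n-1)-(p-1)=n-p$ whenever $p-1\le n-d$, and the hypothesis $\codim \Sigma=n-d\ge 2k-1$ combined with $p\le k$ ensures both $p\le n-d+1$ and $n-p\ge d+k-1\ge 1$ (the borderline case $d=0$, $k=1$ is handled separately: then $Z=\emptyset$ generically, so $H$ is smooth and $\Omega^{p-1}_H$ is locally free). Thus $\Omega^{p-1}_H\otimes \scrO_H(-H)$ is torsion free, and a torsion-free sheaf on $H$ admits no nonzero subsheaves supported on a proper closed subset of $H$, forcing $\ker\alpha=0$. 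The crux lies in calibrating the hypothesis $\codim \Sigma\ge 2k-1$ so that Greuel's theorem both applies (the range $p-1\le n-d$) and delivers positive depth for $\Omega^{p-1}_H$ at every non-generic point of $H$.
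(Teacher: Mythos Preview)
Your argument is correct and takes a genuinely different route from the paper. You work directly on $H$: after verifying middle exactness formally, you apply Greuel's depth bound (Theorem~\ref{Greuelthm}) to $\Omega^{p-1}_H$ on the lci variety $H$ (whose singular locus sits in $Z=\Sigma\cap H$ of dimension $\le d-1$) and conclude that $\ker\alpha$, being supported on $Z$, must vanish. The paper instead works on $X$: it invokes Greuel's de Rham lemma \cite[Lemma~1.6]{Greuel75} to get the exact Koszul sequence $0\to\Omega^{k-1}_X/df\wedge\Omega^{k-2}_X\to\Omega^k_X\to\Omega^k_X/df\wedge\Omega^{k-1}_X\to 0$, tensors with $\scrO_H$, and reduces to showing $\mathit{Tor}_1^{\scrO_X}(\Omega^k_X/df\wedge\Omega^{k-1}_X,\scrO_H)=0$, which in turn follows from $\Omega^{k+1}_X$ having no $f$-torsion---again via Greuel's depth bound, but on $X$.

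Your approach is more economical for the proposition itself, since it avoids the de Rham lemma entirely. The paper's approach has the advantage of yielding Corollary~\ref{KDexactcor} ($\mathit{Tor}_1^{\scrO_X}(\Omega^p_X,\scrO_H)=0$ for $p\le k+1$) as an immediate byproduct, and this is used later in Section~\ref{section5}. One small point: your phrasing via ``torsion-free'' is slightly loose---the depth bound $n-p\ge d+k-1$ does not by itself give torsion-freeness for $p\ge 2$. What you actually need (and have) is $\operatorname{depth}_x\Omega^{p-1}_H\ge d\ge\dim Z+1$, so Scheja's theorem (Theorem~\ref{Schejathm}(i)) applied with $A=Z$ gives directly that no section of $\Omega^{p-1}_H\otimes\scrO_H(-H)$ is supported on $Z$, hence $\ker\alpha=0$.
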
 
\begin{proof} We can assume that $X$ is affine and that $H =(f)$ for a function $f\colon X \to \mathbb{A}^1$ which has no critical points away from $\Sigma$. 
The sequence $(*)_p$ is always exact for $p=0$,  and it is for $p=1$ because we have the exact sequence
$$\scrO_H(-H)   \to \Omega^1_X|H \to \Omega^1_H \to 0,$$
with $\scrO_H(-H) \cong \scrO_H$ locally free and $H$ reduced (as it is generically reduced and lci).  

By induction on $k$, it suffices to consider the case $p=k$. By the remarks above, we can assume $k\ge 2$. By the de Rham lemma   \cite[Lemma 1.6]{Greuel75}, we have the following:

\begin{lemma} Let $d =\dim \Sigma$. For $0\le p < n-d$, the following sequence is exact:
$$0 \to \scrO_X \xrightarrow{df \wedge} \Omega^1_X \xrightarrow{df \wedge} \cdots \xrightarrow{df \wedge} \Omega^{p+1}_X .$$
Equivalently, for $0\le p < n-d$,
$$df \wedge \colon \Omega^p_X/ df \wedge \Omega^{p-1}_X \to \Omega^{p+1}_X$$
is injective.  
\end{lemma} 
\begin{proof} Since $f$ is generic, the critical set $C(f)$ of $f$ (in the notation of \cite{Greuel75}) is just $\Sigma$. Then the result follows immediately from \cite[Lemma 1.6]{Greuel75} (with $g=h$, $k=1$, and $C(f,g)\cap N(h) = \Sigma$). 
\end{proof}

From the lemma, since $n-d = \codim \Sigma \ge 2k-1> k $, because $k >1$, there is an exact sequence
$$ 0 \to \Omega^{k-1}_X/ df \wedge \Omega^{k-2}_X \xrightarrow{df \wedge} \Omega^k_X \to \Omega^k_X/ df \wedge \Omega^{k-1}_X \to 0.$$
 Tensoring with $\scrO_H$ gives an exact sequence
 $$0 \to \mathit{Tor}_1^{\scrO_X} (\Omega^k_X/ df \wedge \Omega^{k-1}_X, \scrO_H) \to \Omega^{k-1}_H \to \Omega^k_X|H \to \Omega^k_H \to 0.$$
 So we have to show that, for a general choice of $f$,  $\mathit{Tor}_1^{\scrO_X} (\Omega^k_X/ df \wedge \Omega^{k-1}_X, \scrO_H)  = 0$. Since $\scrO_H$ has the free resolution 
 $$0 \to \scrO_X \xrightarrow{\times f}  \scrO_X \to \scrO_H \to 0,$$
 it suffices to show that $\Omega^k_X/ df \wedge \Omega^{k-1}_X$ has no $f$-torsion, i.e.\ that multiplication by $f$ is injective. Since there is an inclusion $\Omega^k_X/ df \wedge \Omega^{k-1}_X \to \Omega^{k+1}_X$, it suffices to prove that $\Omega^{k+1}_X$ has no $f$-torsion. Let $\tau$ be an  $f$-torsion local section   of $\Omega^{k+1}_X$.  Note that the support  of   $\tau$   must be contained in $\Sigma \cap V(f)$, since $\Omega^{k+1}_X|X-\Sigma$ is torsion free and $f$ is invertible on $X- V(f)$. By the assumption that $H$  is general, every component of $\Sigma \cap V(f)$ has dimension $\le d-1$. By  \cite[Lemma 1.8]{Greuel75}, since $k+1 \le 2k-1 \leq \codim \Sigma$ as long as $k\ge 2$, we have, for every $x\in X$, 
 $$\operatorname{depth}_x\Omega^{k+1}_X \geq n - (k+1) .$$
 Using $n-d \geq 2k-1$ gives $n-k-1 \geq d+ k -2 \geq d$, since we are assuming $k\geq 2$. Thus 
 $$\operatorname{depth}_x\Omega^{k+1}_X \geq \dim (\Sigma \cap V(f)) + 1.$$
 It follows from Theorem~\ref{Schejathm}(i)  that, for every open subset $U$ of $X$,  the restriction map 
 $$H^0(U; \Omega^{k+1}_X|U) \to H^0(U-(\Sigma \cap V(f); \Omega^{k+1}_X|U-(\Sigma \cap V(f))$$
 is injective. In particular,   $\tau=0$. Hence $(*)_p$ is   exact for $p \le k$. 
\end{proof}

Note that we showed the following in the course of proving Proposition~\ref{KDexact}:

\begin{corollary}\label{KDexactcor} With $X$ and $H$ as in the statement of Proposition~\ref{KDexact}, 
$\mathit{Tor}_1^{\scrO_X} (\Omega^p_X, \scrO_H)= 0$ for all $p\le k+1$. \qed
\end{corollary}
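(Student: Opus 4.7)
The plan is to reduce the Tor vanishing to an $f$-torsion statement, and then to apply exactly the depth/Scheja argument already carried out for $\Omega^{k+1}_X$ in the proof of Proposition~\ref{KDexact}. We may work locally, so assume as before that $X$ is affine and $H = V(f)$ for a function $f$ whose only critical points lie in $\Sigma$.

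First, I will use the Koszul-style free resolution
\[
0 \to \scrO_X \xrightarrow{\times f} \scrO_X \to \scrO_H \to 0
\]
to compute $\mathit{Tor}_1^{\scrO_X}(\Omega^p_X,\scrO_H)$ as the kernel of multiplication by $f$ on $\Omega^p_X$, i.e.\ as the $f$-torsion subsheaf of $\Omega^p_X$. In this way the corollary reduces to showing that, for every $p\le k+1$, the sheaf $\Omega^p_X$ has no $f$-torsion.

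Next, I would identify the support of any such $f$-torsion. Away from $\Sigma$ the sheaf $\Omega^p_X$ is locally free, hence torsion free, and away from $V(f)$ the element $f$ is invertible; so any $f$-torsion section of $\Omega^p_X$ must be supported inside $\Sigma \cap V(f)$. By the genericity assumption on $H$ we have $\dim(\Sigma \cap V(f)) \le d-1$. The key input is now Greuel's depth estimate (Theorem~\ref{Greuelthm}): since the hypothesis $\codim \Sigma \ge 2k-1$ gives $p \le k+1 \le 2k-1 \le n-d$ for all $p\le k+1$ (treating the small cases $k=1$ and $p\le 1$ by the trivial observations that $\Omega^0_X = \scrO_X$ is flat and that $\operatorname{depth}_x \Omega^1_X \ge n-1 \ge d$), we obtain
\[
\operatorname{depth}_x \Omega^p_X \;\ge\; n - p \;\ge\; n - (k+1) \;\ge\; d \;\ge\; \dim(\Sigma \cap V(f)) + 1
\]
for every $x \in X$.

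Finally, I will invoke Scheja's theorem (Theorem~\ref{Schejathm}(i)) with $A = \Sigma \cap V(f)$ and $\mathcal{F} = \Omega^p_X$: the depth bound above forces the restriction map $H^0(U; \Omega^p_X|U) \to H^0(U\setminus (\Sigma\cap V(f));\Omega^p_X|U\setminus(\Sigma\cap V(f)))$ to be injective on every open set $U\subseteq X$. Since any $f$-torsion section is supported on $\Sigma \cap V(f)$ and hence dies under this restriction, it must be zero. This yields $\mathit{Tor}_1^{\scrO_X}(\Omega^p_X,\scrO_H)=0$ for $p\le k+1$. The only point that requires any real care is lining up the numerical inequality $n-p \ge \dim(\Sigma\cap V(f))+1$ in the boundary cases, but this follows directly from the genericity of $H$ together with the assumption $\codim\Sigma \ge 2k-1$; no new ideas beyond those already employed in the proof of Proposition~\ref{KDexact} are needed.
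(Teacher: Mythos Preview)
Your proposal is correct and takes exactly the paper's approach: reduce $\mathit{Tor}_1$ to $f$-torsion via the two-term free resolution of $\scrO_H$, then kill the torsion using Greuel's depth bound together with Scheja's injectivity --- this is precisely what the proof of Proposition~\ref{KDexact} already carries out for $p=k+1$ (with the same chain of inequalities $n-p\ge n-(k+1)\ge d\ge \dim(\Sigma\cap V(f))+1$), and the paper's ``proof'' of the corollary is simply the observation that it was done there. One minor point: your parenthetical for the small cases handles $p\le 1$ but does not quite cover $p=2$ when $k=1$; the paper's argument likewise only treats $k\ge 2$ explicitly, and in any event only the range $p\le k$ is used in the later applications.
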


\section{$k$-Du Bois and $k$-rational singularities}\label{section3}
We now review the notion of Du Bois and higher Du Bois singularities, and propose a general definition for higher rational singularities. After verifying that our definition agrees with the standard definition of rational singularities, as well as with previous definitions of higher rational singularities (under mild assumptions, expected to hold in general), we discuss the connection between higher rational and higher Du Bois singularities.  

\subsection{The filtered de Rham complex} 
On a smooth scheme $Y$ proper over $\Spec \Cee$ or a compact K\"ahler manifold, the Hodge-de Rham spectral sequence with $E_1$ page $E_1^{p,q}= H^q(Y;\Omega^p_Y)$ degenerates at $E_1$ and computes the Hodge structure on $H^{p+q}(Y,\Cee)$. For $X$ not necessarily smooth or proper over $\Spec \Cee$,  Deligne showed that $H^*(X,\Cee)$ carries a canonical mixed Hodge structure. Subsequently,  Du Bois \cite{duBois} introduced an object $\uOb_X = (\uOb_X, F^\bullet\uOb_X)$ in the filtered derived category whose graded pieces $\uOp_X=\Gr^p_F\uOb_X[p]$ are analogous to   $\Omega_X^p$ in the smooth case (see \cite[\S7.3]{PS}). Since $\uOp_X$ is defined locally  in the \'etale topology, it agrees with $\Omega_X^p$ at smooth points. Near the singular locus, if $\pi:\hX\to X$ is a log resolution with $E\subseteq \hX$ the reduced exceptional divisor, $\uOb_X$ is closely related to  $R\pi_*\Omega^\bullet_{\hX}(\log E)(-E)$.   More precisely, we have the following by \cite[Example 7.25]{PS} and by  \cite[\href{https://stacks.math.columbia.edu/tag/05S3}{Tag 05S3}]{stacks-project}:  

\begin{theorem}\label{relative} If $\pi\colon \hX \to X$ is a log resolution with reduced exceptional divisor $E$, and we define
    $\uOb_{X,\Sigma} = R\pi_*\Omega^\bullet_{\hX}(\log E)(-E)$, where $\Sigma$ is the singular locus of $X$, then there is the distinguished triangle of relative cohomology in the filtered derived category
$$\uOb_{X,\Sigma} \to \uOb_X \to \uOb_\Sigma \xrightarrow{+1} .$$
Thus, in the derived category, there is a corresponding distinguished  triangle  
$$\uOp_{X,\Sigma} \to \uOp_X \to \uOp_\Sigma \xrightarrow{+1} .\qed$$
\end{theorem}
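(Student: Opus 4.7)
The plan is to derive the triangle from the descent property of the filtered Deligne-Du Bois complex with respect to abstract blow-up squares, which is built into the cubical-hyperresolution construction of $\uOb$ (see \cite[\S7.3]{PS}). Since $\pi\colon\hX\to X$ is a log resolution, it is a proper birational morphism which is an isomorphism away from $\Sigma$, and $E=\pi^{-1}(\Sigma)$ is a reduced simple normal crossings divisor. Thus the cartesian diagram
$$\begin{CD}
E @>>> \hX \\
@VVV @VV{\pi}V \\
\Sigma @>>> X
\end{CD}$$
is an abstract blow-up square, and descent produces a Mayer-Vietoris distinguished triangle in the filtered derived category,
$$\uOb_X \to R\pi_*\uOb_{\hX}\oplus \uOb_\Sigma \to R\pi_*\uOb_E \xrightarrow{+1}.$$
An application of the octahedral axiom (projecting onto the $\uOb_\Sigma$ summand) rewrites this as
$$\operatorname{fib}\bigl(R\pi_*\uOb_{\hX} \to R\pi_*\uOb_E\bigr) \to \uOb_X \to \uOb_\Sigma \xrightarrow{+1},$$
so the task reduces to identifying the left-hand fiber with $\uOb_{X,\Sigma}=R\pi_*\Omega^\bullet_{\hX}(\log E)(-E)$.

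For this identification, I would use that $\hX$ is smooth, so $\uOb_{\hX}\simeq \Omega^\bullet_{\hX}$ with its stupid filtration, and that $E$ is a simple normal crossings divisor, so by Remark \ref{rem-examples}(3) and \cite[Example 7.23(1)]{PS}, $\uOb_E\simeq \Omega^\bullet_E/\tau^\bullet_E$ with its stupid filtration. The key technical step is to exhibit a short exact sequence of filtered complexes on $\hX$
$$0 \to \Omega^\bullet_{\hX}(\log E)(-E) \to \Omega^\bullet_{\hX} \to \Omega^\bullet_E/\tau^\bullet_E \to 0,$$
in which the left-hand map is the natural inclusion and the right-hand map is restriction modulo torsion. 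This is a local computation on $\hX$: at a point where $E$ has local equations $z_1\cdots z_r=0$, one uses $\Omega^p_{\hX}(\log E)(-E) = (z_1\cdots z_r)\cdot\Omega^p_{\hX}(\log E)$ and verifies in each degree that the quotient coincides with the sheaf of K\"ahler $p$-forms on $E$ modulo those that pull back to zero on the normalization. Applying $R\pi_*$ to this short exact sequence then identifies $R\pi_*\Omega^\bullet_{\hX}(\log E)(-E)$ with the fiber of $R\pi_*\Omega^\bullet_{\hX}\to R\pi_*(\Omega^\bullet_E/\tau^\bullet_E)$, which is exactly the object required.

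The second distinguished triangle, at the level of graded pieces, is then automatic: since all the filtrations in question are stupid filtrations, one has $\Gr^p_F\Omega^\bullet_{\hX}(\log E)(-E)[p] = \Omega^p_{\hX}(\log E)(-E)$, and the functor $\Gr^p_F[p]$ carries distinguished triangles in the filtered derived category to distinguished triangles in the derived category. I expect the main obstacle to be ensuring strictness of the filtrations so that the filtered descent triangle is actually defined in the filtered derived category; however, this is guaranteed by the general theory of cubical hyperresolutions together with the SNC nature of $E$, so no additional work is required beyond invoking the existing theory. The only genuinely new computation in this plan is thus the local short exact sequence above, which is a routine check on local coordinates.
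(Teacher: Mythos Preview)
Your proposal is correct and is essentially the argument behind the references the paper invokes. Note that the paper does not give a proof of this theorem at all: it simply cites \cite[Example~7.25]{PS} and \cite[Tag~05S3]{stacks-project} and places a \qed. Your sketch---the Mayer--Vietoris triangle coming from descent for the abstract blow-up square, together with the local short exact sequence $0\to\Omega^p_{\hX}(\log E)(-E)\to\Omega^p_{\hX}\to\Omega^p_E/\tau^p_E\to 0$ identifying the fiber---is precisely the content of those references, so there is nothing to compare against and nothing substantive to add.
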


In the proper case, there is the following fundamental result of Du Bois, based on Deligne's construction of the mixed Hodge structure on $Y$:

\begin{theorem}[Du Bois]\label{thm-e1} If $Y$ is proper over $\Spec \Cee$,  then  the spectral sequence with $E_1$ page $\mathbb{H}^q(Y; \underline{\Omega}^p_Y) \implies  \mathbb{H}^{p+q}(Y;\underline{\Omega}^\bullet_Y) = H^{p+q}(Y; \Cee)$ degenerates at $E_1$ and the corresponding filtration on $H^*(Y;\Cee)$ is the Hodge filtration associated to the mixed Hodge structure on $H^*(Y;\Cee)$. \qed
\end{theorem}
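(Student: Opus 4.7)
My plan follows Du Bois's original approach via cubical hyperresolutions. By Hironaka together with a standard simplicial induction, choose a proper augmentation $\epsilon\colon X_\bullet \to Y$ with each $X_i$ smooth and projective such that the canonical map $\Cee_Y \xrightarrow{\sim} R\epsilon_*\Cee_{X_\bullet}$ is a quasi-isomorphism. A concrete model for the filtered de Rham complex is then $\uOb_Y \simeq R\epsilon_*\Omega^\bullet_{X_\bullet}$ equipped with the stupid filtration $F^p = R\epsilon_*\Omega^{\ge p}_{X_\bullet}$, so that $\uOp_Y \simeq R\epsilon_*\Omega^p_{X_\bullet}$; applying the holomorphic Poincar\'e lemma termwise on $X_\bullet$ and combining with proper descent yields $\mathbb{H}^*(Y;\uOb_Y) \cong H^*(Y;\Cee)$. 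The spectral sequence in the statement is then, by definition, the hypercohomology spectral sequence attached to this stupid filtration.

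To obtain degeneration at $E_1$, I would upgrade the pair $(R\epsilon_*\Q_{X_\bullet}, R\epsilon_*\Omega^\bullet_{X_\bullet}, F)$ to a cohomological mixed Hodge complex in the sense of Deligne (see \cite[\S3.3, \S8]{PS}), taking as weight filtration $W$ the filtration induced, after the standard shift, by the simplicial skeleton. Each graded piece $\Gr^W_a$ is then computed by the cohomologies of the smooth projective components $X_i$, and therefore carries a pure Hodge structure of the correct weight by the classical Hodge decomposition together with $E_1$-degeneration on smooth projective varieties. The abstract $E_1$-degeneration theorem for cohomological mixed Hodge complexes, which is a formal consequence of strictness of morphisms of MHS with respect to $F$, then forces the differentials $d_r$ of the Hodge-filtration spectral sequence to vanish for every $r \ge 1$. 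The induced MHS on $\mathbb{H}^*(Y;\uOb_Y)$ coincides with Deligne's MHS on $H^*(Y;\Cee)$ by the explicit construction, so the limiting filtration is the Hodge filtration, as required.

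The main obstacle is the assembly of the mixed Hodge complex structure itself — in particular, defining the weight filtration on $R\epsilon_*\Q_{X_\bullet}$ and verifying purity of its graded pieces. This is the technical heart of Deligne's \cite{Deligne-L}, and is established by a double induction on the simplicial degree and the dimension of $Y$, reducing all deeper Hodge-theoretic input to the smooth projective case. Once the mixed Hodge complex is in place, the $E_1$-degeneration and the identification of the induced filtration with $F^\bullet$ on $H^*(Y;\Cee)$ are purely formal consequences of strictness.
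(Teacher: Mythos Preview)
The paper does not prove this theorem: it is stated as a classical result of Du Bois (building on Deligne) and is marked with \qed\ immediately after the statement. So there is no proof in the paper to compare against; the authors simply cite it as background.

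Your outline is the standard and correct approach, essentially that of Du Bois \cite{duBois} as presented in \cite[\S7.3]{PS} or \cite{GNPP}: realize $\uOb_Y$ via a smooth proper hyperresolution $\epsilon\colon X_\bullet \to Y$, endow $R\epsilon_*\Omega^\bullet_{X_\bullet}$ with the weight filtration coming from the simplicial (or cubical) truncation, verify that this gives a cohomological mixed Hodge complex, and then invoke the formal $E_1$-degeneration and strictness for such complexes. The resulting mixed Hodge structure agrees with Deligne's by construction.

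Two minor points. First, you conflate cubical hyperresolutions (as in \cite{GNPP}) with simplicial resolutions (as in Deligne's Th\'eorie de Hodge III); either works, but they are technically distinct constructions and it is cleaner to commit to one. Second, your citation \cite{Deligne-L} is to the wrong Deligne paper: in this paper's bibliography that reference is to Deligne's theorem on degeneration for \emph{smooth} projective morphisms (quoted in the first paragraph of Section~\ref{section1}), not to Hodge~III where the simplicial descent and the mixed Hodge complex formalism for singular varieties are developed. The ``technical heart'' you describe is in Hodge~III, not in \cite{Deligne-L}.
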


The result above allows us to define Hodge numbers in the singular case: 
\begin{definition}\label{defHodgeno}
Let $Y$ be a compact complex algebraic variety. We define the {\it Hodge-Du Bois numbers} 
$$\uh^{p,q}(Y):=\dim\Gr^p_FH^{p+q}(Y),$$
for $0\le p,q\le n$. In particular, if $Y$ is smooth, $\uh^{p,q}(Y)=h^{p,q}(Y)$. In general, 
by Theorem \ref{thm-e1}, $$\uh^{p,q}(Y)=\dim \mathbb H^q(Y; \uOp_Y)=\sum_{0\le r\le q} h^{p,r}_{p+q}(Y),$$ where $h^{p,r}_{p+q}(Y):=\dim \Gr^p_F\Gr^W_{p+r} H^{p+q}(Y)$ are the {\it Hodge-Deligne numbers} associated to the mixed Hodge structure on $Y$. 
\end{definition}

\begin{remark}
As the example of nodal curves shows (Remark~\ref{rem-examples}) the Hodge-Du Bois diamond will not satisfy either of the Hodge symmetries: $\uh^{p,q}=\uh^{q,p}$ or $\uh^{p,q}=\uh^{n-p,n-q}$. Nonetheless, some vestige  of these symmetries remains (in the form of inequalities) as those given by Lemma \ref{ineq-sym} below.
\end{remark}
Another key consequence for us is the following:

\begin{corollary}\label{DBdegen} With notation as above,
\begin{enumerate} 
\item[\rm(i)] The natural map $H^i(Y; \Cee) \to \mathbb{H}^i(Y;\underline{\Omega}^\bullet_Y/F^{k+1}\underline{\Omega}^\bullet_Y)$ is surjective for all $i$ and $k$.
\item[\rm(ii)] The spectral sequence with $E_1$ term
$$E_1^{p,q} = \begin{cases} \mathbb{H}^q(Y; \underline{\Omega}^p_Y) , &\text{for $p\leq k$;}\\
0, &\text{for $p> k$.}
\end{cases}$$
converging to $\mathbb{H}^{p+q}(Y; \underline{\Omega}^\bullet_Y/F^{k+1}\underline{\Omega}^\bullet_Y)$ degenerates at $E_1$. \qed
\end{enumerate}
\end{corollary}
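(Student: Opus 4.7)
The plan is to deduce both statements directly from the $E_1$-degeneration of Theorem~\ref{thm-e1}, by applying hypercohomology to the distinguished triangle of filtered complexes
$$F^{k+1}\uOb_Y \to \uOb_Y \to \uOb_Y/F^{k+1}\uOb_Y \xrightarrow{+1}.$$

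First, I would reinterpret Theorem~\ref{thm-e1} as the strictness of the Hodge filtration: the natural map $\mathbb{H}^i(Y; F^{k+1}\uOb_Y) \to \mathbb{H}^i(Y; \uOb_Y) = H^i(Y;\Cee)$ is injective and identifies its source with $F^{k+1}H^i(Y;\Cee)$. This is equivalent to saying that each of the spectral sequences associated to the filtrations $F^{\geq r}$ degenerates at $E_1$, which is a standard consequence of Theorem~\ref{thm-e1}. The long exact sequence in hypercohomology coming from the triangle above then collapses into the short exact sequence
$$0 \to F^{k+1}H^i(Y;\Cee) \to H^i(Y;\Cee) \to \mathbb{H}^i(Y; \uOb_Y/F^{k+1}\uOb_Y) \to 0,$$
which gives (i) directly and identifies the abutment of the truncated spectral sequence with $H^i(Y;\Cee)/F^{k+1}H^i(Y;\Cee)$.

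For (ii), the filtration on $\uOb_Y/F^{k+1}\uOb_Y$ induced from $F^\bullet$ has graded pieces $\uOp_Y[-p]$ for $p \leq k$ and $0$ for $p > k$, so the associated spectral sequence has the $E_1$-page described in the statement. In any convergent spectral sequence one has $\dim \bigoplus_{p+q=i} E_\infty^{p,q} \leq \dim \bigoplus_{p+q=i} E_1^{p,q}$, with equality if and only if all higher differentials vanish. But by the previous step and the $E_1$-degeneration of the original spectral sequence,
$$\dim \mathbb{H}^i(Y; \uOb_Y/F^{k+1}\uOb_Y) = \dim H^i(Y;\Cee)/F^{k+1}H^i(Y;\Cee) = \sum_{\substack{p+q=i\\ p\leq k}} \dim \mathbb{H}^q(Y; \uOp_Y),$$
which equals the total $E_1$-dimension. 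Equality forces $E_\infty = E_1$.

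The only genuinely non-trivial ingredient is the strictness statement extracted from Theorem~\ref{thm-e1}; after that, both (i) and (ii) reduce to bookkeeping with the filtered complex and its truncation. I do not expect any serious obstacle.
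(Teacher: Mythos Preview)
Your proposal is correct and follows essentially the same approach as the paper: both derive (i) and (ii) from the $E_1$-degeneration of Theorem~\ref{thm-e1}, reinterpreted as strictness of the Hodge filtration. The only minor difference is that for (ii) the paper argues directly that strictness passes to the quotient filtered complex $\uOb_Y/F^{k+1}\uOb_Y$ (a statement valid without any finiteness hypothesis), whereas you use a dimension count, which relies on properness of $Y$; since $Y$ is proper here, this is immaterial.
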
 
\begin{proof} This is a consequence of the following general fact: If $(\mathcal{C}^\bullet, d,\mathcal{F}^\bullet\mathcal{C}^\bullet)$ is a filtered complex, then the associated spectral sequence degenerates at the $E_1$ page $\iff$ the differential $d$ is strict with respect to the filtration $\mathcal{F}^\bullet\mathcal{C}^\bullet$. Then, assuming $E_1$-degeneration, an easy argument shows that $H^i(\mathcal{C}^\bullet,d) \to H^i(\mathcal{C}^\bullet/\mathcal{F}^{k+1}\mathcal{C}^\bullet,d)$ is surjective for every $i$ and $k$. Since the induced filtration on $\mathcal{C}^\bullet/\mathcal{F}^{k+1}\mathcal{C}^\bullet$ is also strict with respect to $d$, the corresponding spectral sequence for the complex $\mathcal{C}^\bullet/\mathcal{F}^{k+1}\mathcal{C}^\bullet$ also degenerates at $E_1$.
\end{proof}

\subsection{Du Bois and higher Du Bois singularities} There is a natural comparison map 
$$\phi^\bullet \colon (\Omega_X^\bullet, \sigma^\bullet)\to (\uOb_X, F^\bullet\uOb_X)$$    \cite[p.175]{PS}, where $\sigma^\bullet$ is the trivial or naive filtration on $\Omega_X^\bullet$. Following \cite{MOPW} and \cite{JKSY-duBois}, one defines:

\begin{definition}
Let $X$ be a complex algebraic variety. Then $X$ is \textsl{$k$-Du Bois} if the natural maps
$$\phi^p:\Omega_X^p\to \uOp_X$$
are quasi-isomorphisms for $0\le p\le k$. Note that the case $k=0$ coincides with the usual definition of Du Bois singularities  \cite[Def. 7.34]{PS}.
\end{definition} 

\begin{example} Let $f(z_1, \dots, z_{n+1}) = z_1^{d_1} + \cdots + z_{n+1}^{d_{n+1}}$ define the weighted homogeneous singularity $X =V(f) \subseteq \mathbb{A}^{n+1}$. Then, as a consequence of Proposition~\ref{hypersurfcase}(i) and a theorem of Saito \cite[(2.5.1)]{SaitoV} (and see also \cite[Corollary 6.8]{FL22d}), $X$ is $k$-Du Bois at $0$ 
$\iff$  $\Dis\sum_{i=1}^{n+1} \frac1{d_i} \geq k+1$. 
In particular, an  ordinary double point of dimension $n$ is $k$-Du Bois for all $k\leq  \Dis \left[\frac{n-1}{2}\right]$. Thus an ordinary double point of dimension $3$ is $1$-Du Bois, and in fact is the unique $1$-Du Bois hypersurface singularity in dimension $3$ (e.g. \cite[Thm. 2.2]{NS}).  More generally, it is expected that $k$-Du Bois singularities occur first in dimension $2k+1$. This is true at least in the lci case as the following result shows. 
\end{example}

\begin{theorem}[{\cite{MP-loc}, \cite{MOPW}, \cite{JKSY-duBois}}]\label{thmdim}
Let $X$ be a complex algebraic variety with lci singularities. Assume $X$ is $k$-Du Bois with $k\ge 1$. Then $X$ is normal and regular in codimension $2k$,   i.e.\ $\codim \Sigma \ge 2k+1$. 
\end{theorem}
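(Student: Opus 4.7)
The plan is to reduce to the hypersurface case and then invoke Saito's minimal exponent, which provides a clean numerical characterization of the $k$-Du Bois condition for hypersurfaces.

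\emph{Reduction to a hypersurface.} Working locally at a singular point $x \in X$, choose an embedding $X \subseteq A$ into a smooth ambient variety, with $X = V(f_1, \dots, f_r)$ cut out by a regular sequence. The first step is to produce an auxiliary hypersurface $Z$ in a smooth variety such that (a) the $k$-Du Bois condition transfers from $X$ to $Z$, and (b) the codimension of its singular locus $\Sigma_Z$ agrees with (or is controlled by) $\codim \Sigma$. A natural candidate is $Z = V(f_1 + t_2 f_2 + \cdots + t_r f_r) \subseteq A \times \mathbb{A}^{r-1}_{t_2,\dots,t_r}$. I expect this reduction to be the main obstacle of the proof: a direct Jacobian computation places $\Sigma_Z$ over $\Sigma$ via the projection, but the fiber structure must be analyzed carefully, and one must simultaneously verify that the $k$-Du Bois property is preserved by the construction (for instance, via the behavior of Hodge modules along the family $Z \to \mathbb{A}^{r-1}$, whose general fiber is related to $X$ up to a smooth factor).

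\emph{The hypersurface bound.} Once reduced to a hypersurface, Proposition~\ref{hypersurfcase}(i) (extended to the non-isolated setting by \cite{MOPW} and \cite{JKSY-duBois}) gives $\tilde\alpha_Z \geq k+1$. Combined with Saito's upper bound
\[
\tilde\alpha_Z \;\leq\; \tfrac{1}{2}\bigl(\codim_Z \Sigma_Z + 1\bigr),
\]
which is sharp for an $A_1$-singularity (where $\tilde\alpha = (n+1)/2$ and $\codim \Sigma = n$ for $f = x_1^2 + \cdots + x_{n+1}^2$), this yields $\codim_Z \Sigma_Z \geq 2k+1$, and via the reduction $\codim \Sigma \geq 2k+1$.

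\emph{Normality.} Granting the codimension bound for $k \geq 1$, the normality assertion follows from Serre's criterion: $\codim \Sigma \geq 3$ gives regularity in codimension one, and the lci hypothesis gives Cohen--Macaulay (in particular $S_2$). An alternative path that avoids the hypersurface reduction entirely would be to work directly on $X$ via the Koszul-type filtration on $\Omega^p_X$ of Proposition~\ref{prop4} and play Greuel's depth estimate $\operatorname{depth}_x \Omega^p_X \geq n - p$ (Theorem~\ref{Greuelthm}) against a vanishing property for the local cohomology of $\uOp_X$ at $\Sigma$ derived from a log resolution; the $k$-Du Bois isomorphism $\phi^p$ would then force the codimension bound directly. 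However, the hypersurface route seems to give the cleanest statement under the precise hypotheses of the theorem.
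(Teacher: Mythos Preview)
The paper does not give its own proof of this theorem: it simply cites the normality statement from \cite[Cor.~5.6]{MP-loc}, the hypersurface codimension bound from \cite{MOPW} and \cite{JKSY-duBois}, and the general lci case from \cite[Thm.~F and Cor.~3.40]{MP-loc}. So there is no in-paper argument to compare against; the question is whether your sketch constitutes a valid independent proof.

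It does not, and you identify the gap yourself. The entire weight of the argument rests on the reduction step: that the auxiliary hypersurface $Z = V(f_1 + t_2 f_2 + \cdots + t_r f_r) \subseteq A \times \mathbb{A}^{r-1}$ is $k$-Du Bois whenever $X$ is, and that $\codim_Z \Sigma_Z$ controls $\codim_X \Sigma$. You state this is ``the main obstacle'' and then do not carry it out. Neither claim is obvious. The singular locus comparison is manageable via a Jacobian computation (the fibers of $\Sigma_Z \to \Sigma$ are affine spaces), but the transfer of the $k$-Du Bois property to $Z$ requires real input---essentially a statement about how $\uOp$ (or the Hodge filtration on the underlying mixed Hodge module) behaves in this one-parameter-style family, and this is not something you can read off from the definitions. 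Without this, the argument is circular: you are invoking exactly the kind of structural result whose proof is the content of the cited papers.

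For context, the route taken in \cite{MP-loc} is different from your sketch: rather than reducing to a single hypersurface, they define a numerical invariant (the singularity level $p(\scrO_X)$) directly for local complete intersections via local cohomology and Hodge modules, prove that $k$-Du Bois is equivalent to a bound on this invariant (their Thm.~F), and separately bound $\codim \Sigma$ in terms of it (their Cor.~3.40). The minimal exponent $\widetilde\alpha$ enters only in the hypersurface case as a reformulation.

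Your hypersurface step and your normality argument are fine: the bound $\widetilde\alpha_Z \le \tfrac{1}{2}(\codim_Z \Sigma_Z + 1)$ is correct (equivalently $\widetilde\alpha_Z \le \tfrac{1}{2}\codim_Y \Sigma_Z$, sharp at $A_1$), and Serre's criterion gives normality once $\codim \Sigma \ge 3$.
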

\begin{proof}
The normality claim is \cite[Cor. 5.6]{MP-loc}. For hypersurface singularities, various dimension bounds (covering the claim of the theorem) were obtained in both \cite{MOPW} and \cite{JKSY-duBois}. The general lci case follows from \cite[Thm. F]{MP-loc} (numerical characterization of $k$-Du Bois) and \cite[Cor. 3.40]{MP-loc} (bounds on $\dim \Sigma$ in terms of the relevant numerical invariant). 
\end{proof}

 \begin{corollary}\label{cor1} Suppose that $X$   has  lci $k$-Du Bois singularities and that   $f\colon \cX \to S$ is a flat morphism, where $S$ is arbitrary, with $X=X_s=f^{-1}(s)$ for some $s\in S$. Then possibly after replacing $S$ by a neighborhood of $s$,  the sheaf of relative differentials $\Omega^p_{\cX/S}$ is flat over $S$ for all $p\leq k$.
\end{corollary}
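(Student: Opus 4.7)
The plan is to obtain Corollary~\ref{cor1} as a direct consequence of the flatness result Theorem~\ref{flatness} combined with the codimension estimate of Theorem~\ref{thmdim}. Almost nothing new has to be proved; the content lies in verifying that the two hypotheses of Theorem~\ref{flatness} both hold in a neighborhood of $s$.

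First, I would argue that $f$ is an lci morphism in a neighborhood of $X_s$. Since $f$ is flat and the fiber $X_s$ has lci singularities, the standard deformation-theoretic fact that ``flat + lci fiber = lci morphism locally'' lets us shrink $S$ around $s$ so that $f$ becomes a flat lci morphism in the sense required by Theorem~\ref{flatness}. (If $S$ is not smooth, we may further replace $S$ by an open subset of the versal base and use that flatness and the formation of $\Omega^p_{\cX/S}$ commute with base change, so verifying the conclusion on the versal deformation is enough.)

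Next, I would verify the codimension hypothesis $\codim_{X_t}\Sigma_t\ge 2k+1$ for all $t$ in a neighborhood of $s$. At the distinguished point, Theorem~\ref{thmdim} applied to the $k$-Du Bois lci variety $X=X_s$ yields $\codim_{X_s}\Sigma_s\ge 2k+1$. For nearby fibers, one uses upper semicontinuity of fiber dimension applied to the relative singular locus $\cX_{\text{\rm{crit}}}\to S$: the function $t\mapsto \dim \Sigma_t$ is upper semicontinuous, while $\dim X_t$ is locally constant by flatness of $f$. Hence there is an open neighborhood $S'\subseteq S$ of $s$ on which $\dim X_t-\dim \Sigma_t\ge 2k+1$ holds for every $t\in S'$.

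With both hypotheses in place, Theorem~\ref{flatness} applies after replacing $S$ by $S'$, and gives that $\Omega^p_{\cX/S}$ is flat over $S$ for all $p\le k$, which is exactly the statement of the corollary. The only step that requires any genuine input beyond bookkeeping is the deduction of the codimension bound from the $k$-Du Bois hypothesis, and this is handled by citing Theorem~\ref{thmdim}; the expected obstacle would have been that the codimension of the singular locus could fail to be controlled in families, but upper semicontinuity of fiber dimension removes this worry at once.
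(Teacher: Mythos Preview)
Your proposal is correct and follows the same approach as the paper, which simply cites Theorem~\ref{flatness} and Theorem~\ref{thmdim}. One small point: your verification that $\codim_{X_t}\Sigma_t\ge 2k+1$ for all $t$ near $s$ via upper semicontinuity is superfluous, since the hypothesis of Theorem~\ref{flatness} only requires the codimension bound at the single point $s$ (the shrinking of $S$ is already built into its conclusion).
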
 
\begin{proof} This is immediate  from Theorem~\ref{flatness} and Theorem~\ref{thmdim}. 
\end{proof}
 
\begin{remark} (i)   Theorem~\ref{thmdim} and  Theorem~\ref{Greuelthm} imply the following previously known results:
\begin{enumerate}
\item[(1)] If $X$ has hypersurface $k$-Du Bois singularities, then $\Omega_X^p$ is torsion free for $0\le p\le k$ (\cite[Prop. 2.2]{JKSY-duBois}).
\item[(2)]  If  $X$ has lci $k$-Du Bois singularities, then $\Omega_X^p$ is reflexive for $0\le p\le k$ (\cite[Cor. 5.6]{MP-loc}).  
\end{enumerate}

\smallskip
\noindent (ii) For $p=1$ and lci singularities, the situation is well understood by a result of Kunz   \cite[Prop. 9.7]{Kunz}:   $\Omega_X^1$ satisfies Serre's condition $S_a$ $\iff$ $X$ satisfies $R_a$ (i.e.\ is regular in codimension $a$).   
\end{remark}

\begin{remark} For other examples of  $k$-Du Bois and $k$-rational singularities, one can consult \cite{KL2} (cf.\ Theorem 5.3 and \S6.1).
\end{remark}

\subsection{Higher rational singularities} The standard definition of a rational singularity involves the choice of a resolution (e.g. \cite[Def. 5.8]{KollarMori}). As we will explain below, it is possible to give an equivalent definition for rational singularities without reference to resolutions, but using instead the dualizing complex. In addition to being more intrinsic, it generalizes to higher rational singularities, and it factors naturally through the higher Du Bois condition.

 For a complex algebraic variety $X$ of dimension $n$, let $\omega^\bullet_X$ denote the dualizing complex. Define the    {\it Grothendieck duality functor} $\mathbb D_X$ as follows:
$$\mathbb D_X(-):= RHom(-, \omega_X^\bullet)[-n].$$
 In particular, $\omega_X^\bullet=\mathbb D_X(\scrO_X)[n]$.

\begin{lemma}\label{lem-psi} For all $p$, there exists a natural sequence of maps in the derived category
$$\Omega_X^p\xrightarrow{\phi^p} \uOp_X\xrightarrow{\psi^p}  \bD_X(\uOnp_{X}).$$
\end{lemma}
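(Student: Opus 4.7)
The map $\phi^p$ is the canonical K\"ahler-to-Du Bois comparison already recalled in the paper, so the content of the lemma is in producing $\psi^p$. My plan is to combine (i) a wedge product on the graded pieces of the filtered de Rham complex with (ii) a trace morphism from the top graded piece to the dualizing complex, and then dualize via the $\otimes^L$--$R\Hom$ adjunction.

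First, I would invoke the standard fact that the filtered Du Bois complex $(\uOb_X, F^\bullet)$ carries a natural structure of a (filtered) commutative DG algebra in the derived category; this is built into the construction of $\uOb_X$ via smooth simplicial hyperresolutions (see \cite[\S7.3]{PS}). Taking graded pieces yields a product
$$\mu^{p,n-p}\colon \uOp_X\otimes^L \uOnp_X \longrightarrow \uO^n_X$$
that restricts to the usual wedge on the smooth locus.

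Second, I would construct a trace morphism $\mathrm{tr}\colon \uO^n_X \to \omega_X^\bullet[-n]=\bD_X(\scrO_X)$. For a single log resolution $\pi\colon\hX\to X$, applying Grothendieck duality to the natural map $\scrO_X\to R\pi_*\scrO_{\hX}$ yields $R\pi_*\omega_{\hX}^{\bullet}\to\omega_X^{\bullet}$; since $\omega_{\hX}^{\bullet}=\Omega^n_{\hX}[n]$, this becomes $R\pi_*\Omega^n_{\hX}\to\omega_X^{\bullet}[-n]$. Precomposing with the natural map $\uO^n_X\to R\pi_*\Omega^n_{\hX}$ (and using the analogous construction for the full hyperresolution via descent) gives the trace. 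Equivalently, one obtains $\mathrm{tr}$ intrinsically from the dualizing-complex formalism for simplicial schemes.

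Composing $\mu^{p,n-p}$ with $\mathrm{tr}$ produces a pairing
$$\uOp_X\otimes^L \uOnp_X\longrightarrow \omega_X^\bullet[-n],$$
and $\otimes^L$--$R\Hom$ adjunction then yields
$$\psi^p\colon \uOp_X \longrightarrow R\Hom(\uOnp_X,\omega_X^\bullet)[-n] = \bD_X(\uOnp_X),$$
as desired. The only real subtlety I anticipate is verifying well-definedness in the (filtered) derived category, i.e.\ independence from the choice of hyperresolution. This is standard, however, because smooth simplicial hyperresolutions are unique up to homotopy, and both the wedge product on de Rham complexes and the Grothendieck trace are functorial with respect to the simplicial structure.
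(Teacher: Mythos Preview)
Your approach is correct but follows a genuinely different route from the paper's. The paper constructs $\psi^p$ with much less machinery: it starts from the functorial map $\uOnp_X \to R\pi_*\Omega^{n-p}_{\hX}$ for a single resolution $\pi\colon \hX \to X$, applies $\bD_X$, and then uses Grothendieck duality for the proper morphism $\pi$ together with Serre duality on the smooth $\hX$ (namely $\bD_{\hX}(\Omega^{n-p}_{\hX}) \cong \Omega^p_{\hX}$) to identify $\bD_X(R\pi_*\Omega^{n-p}_{\hX}) \cong R\pi_*\Omega^p_{\hX}$. This gives the chain
$$\Omega_X^p \to \uOp_X \to R\pi_*\Omega^p_{\hX} \cong \bD_X(R\pi_*\Omega^{n-p}_{\hX}) \to \bD_X(\uOnp_X),$$
with independence from $\pi$ checked by the usual factorization argument. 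The paper's route has two practical advantages. First, it avoids invoking a filtered DGA structure on $\uOb_X$; while such a structure can be extracted from the hyperresolution construction, it is not actually established in \cite[\S7.3]{PS} and takes some care to set up rigorously in the filtered derived category, so your citation undersells the work involved. Second, the paper's construction makes the factorization of $\psi^p$ through $R\pi_*\Omega^p_{\hX}$ explicit, and this factorization is used downstream (e.g.\ in Lemma~\ref{lemma-onp}, Lemma~\ref{existsleftinv}, and the proof of Theorem~\ref{cor-sym2}). Your pairing-and-trace construction is more intrinsic, but to mesh with the rest of the paper you would still need to observe that it factors through a resolution---which you can, since your trace does as well.
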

\begin{proof} By functoriality of the filtered de Rham complex, there is a map $\uOnp_X\to R\pi_*\Omega^{n-p}_{\hX}$.
Applying $\bD_X$ gives
$$\bD_X(R\pi_*\Omega^{n-p}_{\hX})\to \bD_X(\uOnp_X).$$
Since $\pi$ is proper, Grothendieck duality gives
$$\bD_X(R\pi_*\Omega^{n-p}_{\hX})\cong R\pi_*\bD_{\hX}(\Omega^{n-p}_{\hX})\cong R\pi_*\Omega_{\hX}^p.$$
 Thus we get a sequence of maps
 $$\Omega_X^p\to \uOp_X\to R\pi_*\Omega_{\hX}^p \to \bD_X(R\pi_*\Omega^{n-p}_{\hX}) \to \bD_X(\uOnp_X),$$
 as claimed. The map    $\psi^p$ is easily seen to be independent of the choice of a resolution, by the usual factorization arguments.
\end{proof}

\begin{definition}\label{defkrat} The variety $X$ has \textsl{$k$-rational singularities} if the maps 
$$\Omega_X^p\xrightarrow{\psi^p \circ \phi^p}  \bD_X(\uOnp_{X})$$ 
are quasi-isomorphisms for all $0\le p\le k$. 
\end{definition}

\begin{example} Let $f(z_1, \dots, z_{n+1}) = z_1^{d_1} + \cdots + z_{n+1}^{d_{n+1}}$ define the weighted homogeneous singularity $X =V(f) \subseteq \mathbb{A}^{n+1}$. Then, by \cite[Corollary 6.8]{FL22d}, $X$ is $k$-rational at $0$ $\iff$
 $\Dis\sum_{i=1}^{n+1} \frac1{d_i} > k+1$. 
In particular, an  ordinary double point of dimension $n$ is $k$-rational $\iff$ $k< \Dis  \frac{n-1}{2}$. Thus an ordinary double point of dimension $3$ is not $1$-rational.  On the other hand, ADE singularities in dimension $4$ are $1$-rational. Conjecturally, $k$-rational singularities occur in codimension at least $2(k+1)$ (compare Theorem~\ref{thmdim}). 
\end{example}

 The following lemma connects our definition to more standard ones:
\begin{lemma}\label{lemma-onp} Suppose that $\dim \Sigma \leq  d$. Then, for all $p< n-d$, 
$$\bD_X(\uOnp_{X}) \cong R\pi_*\Omega^p_{\hX}(\log E).$$
In particular, if $\codim \Sigma \geq 2k+1$ for some $k\geq 0$, then $\bD_X(\uOnp_{X}) \cong R\pi_*\Omega^p_{\hX}(\log E)$ for all $p\leq k$. 
\end{lemma}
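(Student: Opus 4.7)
The plan is to deduce the lemma from the distinguished triangle of Theorem~\ref{relative} by applying Grothendieck duality termwise. Writing $\uOnp_{X,\Sigma}\to\uOnp_X\to\uOnp_\Sigma\xrightarrow{+1}$ and applying $\bD_X$ gives a triangle
\[
\bD_X(\uOnp_\Sigma)\to \bD_X(\uOnp_X)\to \bD_X(\uOnp_{X,\Sigma})\xrightarrow{+1},
\]
so it suffices (i) to identify the right-hand term with $R\pi_*\Omega^p_{\hX}(\log E)$, and (ii) to show the left-hand term vanishes in the stated range.

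For (i), since $\uOnp_{X,\Sigma}=R\pi_*\Omega^{n-p}_{\hX}(\log E)(-E)$ and $\pi$ is proper, Grothendieck duality combined with the fact that $\hX$ is smooth of dimension $n$ yields
\[
\bD_X\bigl(R\pi_*\Omega^{n-p}_{\hX}(\log E)(-E)\bigr)\cong R\pi_*\bD_{\hX}\bigl(\Omega^{n-p}_{\hX}(\log E)(-E)\bigr).
\]
The residue/wedge pairing $\Omega^p_{\hX}(\log E)\otimes \Omega^{n-p}_{\hX}(\log E)\to \Omega^n_{\hX}(\log E)=\omega_{\hX}(E)$, twisted by $\scrO_{\hX}(-E)$, is a perfect pairing landing in $\omega_{\hX}$; hence $\bD_{\hX}(\Omega^{n-p}_{\hX}(\log E)(-E))\cong \Omega^p_{\hX}(\log E)$, and therefore $\bD_X(\uOnp_{X,\Sigma})\cong R\pi_*\Omega^p_{\hX}(\log E)$.

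For (ii), I would invoke the general vanishing $\underline\Omega^q_Z=0$ for $q>\dim Z$, valid for any reduced variety $Z$. This follows from the construction of the filtered de Rham complex via a cubical hyperresolution $Z_\bullet\to Z$ with each $Z_i$ smooth of dimension at most $\dim Z$: the formula $\uO^q_Z=R\epsilon_*\Omega^q_{Z_\bullet}$ then vanishes because $\Omega^q_{Z_i}=0$ on every component. Applying this to $Z=\Sigma$ with $q=n-p$, the hypothesis $p<n-d$ gives $n-p>d\ge \dim\Sigma$, hence $\uOnp_\Sigma=0$, and therefore $\bD_X(\uOnp_\Sigma)=0$. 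The triangle then collapses to the required isomorphism $\bD_X(\uOnp_X)\cong R\pi_*\Omega^p_{\hX}(\log E)$.

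The "In particular" clause is then immediate: if $\codim\Sigma\ge 2k+1$ then $d\le n-2k-1$, so $n-d\ge 2k+1>k\ge p$ for every $p\le k$, putting us in the range covered by the first statement. There is no real obstacle here; the only point worth flagging is the dimension vanishing $\uO^q_Z=0$ for $q>\dim Z$, which I would either cite from Du~Bois \cite{duBois} / \cite[\S7.3]{PS} or recover in one line from the hyperresolution description.
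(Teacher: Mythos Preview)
Your argument is correct and follows essentially the same approach as the paper: both use the distinguished triangle of Theorem~\ref{relative}, observe that $\uOnp_\Sigma=0$ because $n-p>\dim\Sigma$, and then apply Grothendieck duality together with the log-form pairing to identify $\bD_X(\uOnp_{X,\Sigma})\cong R\pi_*\Omega^p_{\hX}(\log E)$. The only cosmetic difference is that the paper first collapses the triangle to an isomorphism $\uOnp_X\cong\uOnp_{X,\Sigma}$ and then dualizes, whereas you dualize the triangle first and then kill the $\Sigma$-term; the substance is identical.
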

\begin{proof} By Theorem~\ref{relative}, there is the distinguished triangle of relative cohomology
$$\uOnp_{X,\Sigma} \to \uOnp_X \to \uOnp_\Sigma \to . $$
Since $n-p > d$, $\uOnp_\Sigma =0$ and hence $\uOnp_X \cong \uOnp_{X,\Sigma}= R\pi_*\Omega^{n-p}_{\hX}(\log E)(-E)$.  Applying Grothendieck duality, it follows as in \cite[\S2.2]{MOPW} that 
\begin{align*}
\bD_X(\uOnp_{X}) &= \bD_X(R\pi_*\Omega^{n-p}_{\hX}(\log E)(-E)) = R\pi_*\bD_{\hX}(\Omega^{n-p}_{\hX}(\log E)(-E)) \\
&=  R\pi_*\Omega^p_{\hX}(\log E).
\end{align*}
The final statement is clear since $n-d\ge 2k+1$ $\implies$ $n-p \geq n-k \geq d+k+1 > d$.
\end{proof}

\begin{corollary}\label{cor-rat0} $X$ is $0$-rational $\iff$ $X$ has rational singularities. 
\end{corollary}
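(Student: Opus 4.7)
The plan is to evaluate Definition~\ref{defkrat} at $p=0$ and recognize the resulting condition as the standard cohomological criterion for rational singularities, namely that the natural map $\scrO_X \to R\pi_*\scrO_{\hX}$ is a quasi-isomorphism for one (equivalently every) log resolution $\pi\colon \hX \to X$. So the work is to identify the target $\bD_X(\uOb^n_X)$ with $R\pi_*\scrO_{\hX}$ and to check that the composite $\psi^0\circ \phi^0$ corresponds to this adjunction map.

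First I would apply Lemma~\ref{lemma-onp} with $k=0$ and $p=0$. Since $X$ is an algebraic variety, it is generically smooth, so $d=\dim\Sigma <n$, equivalently $n-d\ge 1 = 2\cdot 0+1$, and the lemma gives
$$\bD_X(\uOb^n_X) \;\cong\; R\pi_*\Omega^0_{\hX}(\log E) \;=\; R\pi_*\scrO_{\hX}.$$
Tracing through the construction in Lemma~\ref{lem-psi} at $p=0$, the map $\psi^0\circ\phi^0$ factors as
$$\scrO_X \to \uOb^0_X \to R\pi_*\scrO_{\hX} \to \bD_X(R\pi_*\omega_{\hX}) \to \bD_X(\uOb^n_X),$$
whose first two arrows compose to the adjunction $\scrO_X \to R\pi_*\scrO_{\hX}$. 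I would then check that the final composite, from $R\pi_*\scrO_{\hX}$ back to $\bD_X(\uOb^n_X)\cong R\pi_*\scrO_{\hX}$, is compatible with the identification coming from Lemma~\ref{lemma-onp}; this is essentially a diagram chase comparing the two uses of Grothendieck duality and is independent of the choice of resolution by the argument already given in Lemma~\ref{lem-psi}.

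Once these identifications are in place, $\psi^0\circ\phi^0$ being a quasi-isomorphism becomes exactly the condition that $\scrO_X \to R\pi_*\scrO_{\hX}$ is a quasi-isomorphism. I would then invoke the classical characterization: taking $H^0$ yields $\scrO_X = \pi_*\scrO_{\hX}$, which forces $X$ to be normal since $\pi_*\scrO_{\hX}$ is integrally closed in the function field of $X$; and in characteristic zero the vanishing $R^i\pi_*\scrO_{\hX}=0$ for $i>0$ together with normality is the standard definition of rational singularities (with Cohen-Macaulayness following automatically by Kempf's criterion). The only real point to verify is the compatibility of the two duality-based identifications in the middle step; beyond that the equivalence is formal.
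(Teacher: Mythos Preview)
Your proposal is correct and follows essentially the same route as the paper: apply Lemma~\ref{lemma-onp} at $p=0$ (using that $X$ reduced forces $\dim\Sigma\le n-1$) to identify $\bD_X(\underline{\Omega}^n_X)\cong R\pi_*\scrO_{\hX}$, and then recognize the resulting condition as the usual criterion for rational singularities. The paper's proof is a two-line version of exactly this; your extra care about tracing the map $\psi^0\circ\phi^0$ back to the adjunction and your remarks on normality and Kempf's criterion are elaborations the paper leaves implicit.
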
 
\begin{proof} Since $X$ is reduced, $\dim \Sigma \leq  n-1$. Thus  $X$ is $0$-rational $\iff$ the natural map $\scrO_X \to R\pi_*\scrO_{\hX}$ is an isomorphism $\iff$ $X$ has rational singularities in the usual sense.
 \end{proof}
 
 \begin{remark} 
 Lemma \ref{lemma-onp} and Grothendieck duality give the identification $\underline{\Omega}_X^n=R\pi_*\omega_{\hX}$, which by Grauert-Riemenschneider vanishing is in fact a single sheaf, the {\it Grauert-Riemenschneider sheaf} $\omega_X^{GR}:=\pi_*\omega_{\hX}$. Following \cite{KK20}, let us denote by $\underline{\omega}_X:=\bD_X(\underline{\Omega}_X^0)$. Then the dual form of Definition \ref{defkrat} for $k=0$ is: $X$ has rational singularities $\iff$ the composite map
\begin{equation}\omega_X^{GR}\to\underline{\omega}_X\to \omega_X^\bullet
\end{equation}
 is a quasi-isomorphism. This formulation occurs for instance in \cite{KK20}, and it is equivalent to that given by \cite[Thm. 5.10(3)]{KollarMori} (note that the quasi-isomorphism $\omega_X^{GR}\cong \omega_X^\bullet$ forces $X$ to be Cohen-Macaulay). 
 \end{remark}

In \cite[\S3]{FL}, we  defined $k$-rational singularities for an isolated singularity by the condition that $R\pi_*\Omega^p_{\hX}(\log E)\cong \Omega_X^p$. 
This is  equivalent to Definition \ref{defkrat} (under a mild assumption):
\begin{corollary}\label{cor-olddef} In the above notation, suppose that  $\codim \Sigma \geq 2k+1$.
 \begin{enumerate}
\item[\rm(i)]  $X$ is $k$-rational $\iff$ the natural map $\Omega_X^p \to R\pi_*\Omega^p_{\hX}(\log E)$ is an isomorphism for all $p\le k$. 
\item[\rm(ii)]   $X$ is $k$-rational $\iff$  $X$ is $(k-1)$-rational and $\Omega_X^k \to   R\pi_*\Omega^k_{\hX}(\log E)$ is an isomorphism. \qed
\end{enumerate} 
 \end{corollary}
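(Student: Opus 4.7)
The plan is to reduce the corollary directly to Lemma~\ref{lemma-onp} and the construction of the map $\psi^p\circ\phi^p$ in Lemma~\ref{lem-psi}. Under the codimension hypothesis $\codim \Sigma \ge 2k+1$, Lemma~\ref{lemma-onp} identifies, for every $p\le k$, the target of $\psi^p\circ\phi^p$ with $R\pi_*\Omega^p_{\hX}(\log E)$. The first thing I would do is verify that, under this identification, the composite $\psi^p\circ\phi^p\colon \Omega_X^p\to \bD_X(\uOnp_X)$ agrees with the natural map $\Omega_X^p\to R\pi_*\Omega^p_{\hX}(\log E)$ given by pullback along $\pi$.

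To carry this out, I would unwind the chain of maps used to define $\psi^p\circ\phi^p$ in the proof of Lemma~\ref{lem-psi}, namely
\[
\Omega_X^p\to\uOp_X\to R\pi_*\Omega_{\hX}^p\to \bD_X(R\pi_*\Omega^{n-p}_{\hX})\to \bD_X(\uOnp_X),
\]
and observe that, when $p\le k$, the isomorphism $\uOnp_X\cong R\pi_*\Omega^{n-p}_{\hX}(\log E)(-E)$ used in Lemma~\ref{lemma-onp} fits into a commutative diagram with the obvious map $R\pi_*\Omega^{n-p}_{\hX}(\log E)(-E)\to R\pi_*\Omega^{n-p}_{\hX}$. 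Applying $\bD_X$ and Grothendieck duality (as in the proof of Lemma~\ref{lemma-onp}) converts this into the natural inclusion $R\pi_*\Omega^p_{\hX}\hookrightarrow R\pi_*\Omega^p_{\hX}(\log E)$. Chasing the diagram then shows that $\psi^p\circ\phi^p$ is precisely the composite $\Omega_X^p\to R\pi_*\Omega^p_{\hX}\to R\pi_*\Omega^p_{\hX}(\log E)$, i.e.\ the natural map in the statement.

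Given this identification, part (i) is an immediate reformulation of Definition~\ref{defkrat}: $X$ is $k$-rational iff $\psi^p\circ\phi^p$ is a quasi-isomorphism for all $p\le k$, which by the preceding paragraph is equivalent to the natural map $\Omega_X^p\to R\pi_*\Omega^p_{\hX}(\log E)$ being a quasi-isomorphism for all such $p$. Part (ii) is then formal: the condition in (i) for all $p\le k$ splits as the condition for all $p\le k-1$ (which, by (i) applied with $k$ replaced by $k-1$, is precisely $(k-1)$-rationality; note that $\codim \Sigma \ge 2k+1\ge 2(k-1)+1$ so the hypothesis of Lemma~\ref{lemma-onp} is still in force) together with the single condition at $p=k$.

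The main obstacle will be the bookkeeping in the first step: one has to check that the identification $\bD_X(\uOnp_X)\cong R\pi_*\Omega^p_{\hX}(\log E)$ coming out of Grothendieck duality is genuinely compatible with the map $\psi^p\circ\phi^p$, so that no independent choice of resolution or duality datum interferes. Once this compatibility is in hand, both (i) and (ii) reduce to straightforward rewording. The independence of $\psi^p$ from the resolution, noted in Lemma~\ref{lem-psi}, together with the functoriality of Grothendieck duality with respect to the residue map $\Omega^{n-p}_{\hX}(\log E)(-E)\to\Omega^{n-p}_{\hX}$, is what makes the bookkeeping go through cleanly.
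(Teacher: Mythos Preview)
Your proposal is correct and follows essentially the same approach as the paper, which in fact gives no proof at all (the \qed is placed in the statement), treating the corollary as immediate from Lemma~\ref{lemma-onp} and Definition~\ref{defkrat}. Your explicit compatibility check---that under the identification $\bD_X(\uOnp_X)\cong R\pi_*\Omega^p_{\hX}(\log E)$ the map $\psi^p\circ\phi^p$ becomes the natural pullback map---is a reasonable elaboration of what the paper leaves implicit.
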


In the lci case, the assumption on $R^0\pi_*\Omega^p_{\hX}(\log E)$ is automatic:

\begin{lemma}\label{case0krat} Suppose that $X$ has lci singularities and $\codim \Sigma \geq 2k+1$. Then, for all $p\leq k$,  $\psi^p\circ \phi^p\colon \Omega_X^p \to R^0\pi_*\Omega^p_{\hX}(\log E)$ is an isomorphism. Hence  $X$ is $k$-rational $\iff$ for all $p\le k$ and all $q> 0$, $R^q\pi_*\Omega^p_{\hX}(\log E) =0$.
 \end{lemma}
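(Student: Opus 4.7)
The second statement is immediate from the first combined with Corollary~\ref{cor-olddef}: once we know $\Omega_X^p \cong R^0\pi_*\Omega^p_{\hX}(\log E)$ for every $p \le k$, the quasi-isomorphism condition $\Omega_X^p \xrightarrow{\sim} R\pi_*\Omega^p_{\hX}(\log E)$ characterizing $k$-rationality reduces to the vanishing $R^q\pi_*\Omega^p_{\hX}(\log E) = 0$ for all $q > 0$ and $p \le k$. The real content is the first assertion.

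Using Lemma~\ref{lemma-onp} to identify $\bD_X(\uOnp_X) \simeq R\pi_*\Omega^p_{\hX}(\log E)$ for $p \le k$, the map $\psi^p\circ\phi^p$ becomes a morphism $\Omega_X^p \to R\pi_*\Omega^p_{\hX}(\log E)$. Taking $H^0$ yields the natural map $\alpha\colon \Omega_X^p \to \pi_*\Omega^p_{\hX}(\log E)$ induced by the pullback of K\"ahler differentials followed by inclusion into logarithmic differentials. Over the smooth locus $X - \Sigma$, $\pi$ is an isomorphism and $\alpha$ is the identity; the plan is to extend this to a global isomorphism by constructing an explicit inverse $\beta$, using the lci hypothesis to control behavior along $\Sigma$.

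The key inputs are two. First, Corollary~\ref{cor2} applied with $S$ a point (so $m=0$) says that under the hypotheses $\operatorname{depth}_x \Omega_X^p \ge \dim\Sigma + 2$ for all $x \in X$ and all $p \le k$; by Scheja's Theorem~\ref{Schejathm}(ii) this yields an isomorphism $H^0(U;\Omega_X^p) \xrightarrow{\sim} H^0(U - \Sigma;\Omega_X^p)$ for every open $U$. Second, $\pi_*\Omega^p_{\hX}(\log E)$ is torsion-free, being the pushforward of a locally free sheaf under a proper birational morphism (any $\mathcal{O}_X$-torsion section would pull back to a $\pi^*\mathcal{O}_X$-torsion section on $\hX$, impossible since $\hX$ is integral and $\pi$ dominant). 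A section of $\pi_*\Omega^p_{\hX}(\log E)$ on $U$ then restricts to a section of $\Omega_X^p$ on $U - \Sigma$ (via $\alpha^{-1}$ on the smooth part), which extends uniquely to $U$ by the depth isomorphism; this defines $\beta$. The identities $\beta\circ\alpha = \operatorname{Id}$ and $\alpha\circ\beta = \operatorname{Id}$ both hold on $X - \Sigma$, and hold globally because neither $\Omega_X^p$ (by depth $\ge 2$) nor $\pi_*\Omega^p_{\hX}(\log E)$ (by torsion-freeness) admits nonzero sections supported on $\Sigma$.

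The main technical point is the depth estimate: obtaining $\operatorname{depth}_x\Omega_X^p \ge \dim \Sigma + 2$ for the full range $p \le k$ (not merely $p < k$) is precisely where the lci hypothesis and the strict bound $\codim \Sigma \ge 2k + 1$ are used, via the Greuel-type estimate of Corollary~\ref{cor2}. Everything downstream is formal consequence of Scheja and torsion-freeness. A minor subtlety is the degenerate case $k=0$, where $\alpha$ is $\scrO_X \to \pi_*\scrO_{\hX}$ and the lemma becomes the classical statement that rational $=$ normal $+$ higher $R^q$ vanishing, already subsumed by Corollary~\ref{cor-rat0}.
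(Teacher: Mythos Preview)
Your proposal is correct and follows essentially the same approach as the paper's one-line proof, which also rests on the depth estimate from Corollary~\ref{cor2} (ultimately Greuel's Theorem~\ref{Greuelthm}) together with the torsion-freeness of $R^0\pi_*\Omega^p_{\hX}(\log E)$; you have simply written out the standard extension-across-$\Sigma$ argument in full. One small quibble: your closing remark on $k=0$ is not quite accurate, since Corollary~\ref{cor-rat0} identifies $0$-rational with rational but does not assert that $\scrO_X\to\pi_*\scrO_{\hX}$ is an isomorphism under merely $\codim\Sigma\ge 1$---that requires normality, hence $\codim\Sigma\ge 2$; the depth argument (and the paper's own proof) really needs $k\ge 1$ for the inequality $n-k\ge d+2$ to follow from $n-d\ge 2k+1$.
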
 
 \begin{proof} This follows easily from Theorem~\ref{Greuelthm} and  Corollary~\ref{cor2}, as $R^0\pi_*\Omega^p_{\hX}(\log E)$ is torsion free for all $p$.
 \end{proof}

  \begin{remark}\label{case0kDB}  Suppose that $X$ has an isolated singularity $x$, so that $\Sigma = \{x\}$. Then by Theorem~\ref{relative}, there is the distinguished triangle of relative cohomology
$$\uOb_{ X,x} \to \uOb_X \to \Cee[0] \to , $$
where we somewhat carelessly write $\Cee[0]$ for the skyscraper sheaf $\Cee_x$, viewed as a complex in degree $0$.
Hence $\uOp_{X,x} \to \uOp_X$ is an isomorphism for $p>0$, so if $X$ is lci and $n = \dim X \ge 2k+1$, by the same argument as that of Lemma~\ref{case0krat}, $ \phi^p\colon \Omega_X^p \to R^0\pi_*\Omega^p_{\hX}(\log E)(-E)$ is an isomorphism.  For $p=0$, if $X$ has an isolated singularity and is normal, then the map $\pi_*\scrO_{\hX}(-E) \to \underline{\Omega}^0_X$ has cokernel $\Cee[0]$ and factors through $\mathfrak{m}_x\subseteq \scrO_X$. Hence $\scrO_X \to \mathcal{H}^0\underline{\Omega}^0_X$ is an isomorphism. Thus, if $X$ has an isolated lci singularity and $\dim X   \ge 2k+1$,  i.e.\ $k \le (n-1)/2$,  then $X$ is $k$-Du Bois $\iff$ for all $p\le k$ and all $q> 0$, $R^q\pi_*\Omega^p_{\hX}(\log E)(-E) =0$.
 \end{remark}
 
 \subsection{$k$-rational vs.\ $k$-Du Bois singularities}
Steenbrink \cite{Steenbrink} proved that, if $X$ is an isolated rational singularity, then $X$ is Du Bois, and Kov\'acs \cite{Kovacs99} generalized this, showing that that any rational singularity is Du Bois. Saito gave a different proof in \cite[Thm.\ 5.4]{Saito2000}.  The method of \cite{Steenbrink} generalizes to prove that isolated $k$-rational singularities are $k$-Du Bois (see \cite[Theorem 3.2]{FL22d}), and the method  of \cite{Kovacs99} can be generalized to handle both isolated and lci singularities.  Using the ideas of \cite{Kovacs99},  we shall show the following in Section~\ref{section5}:

\begin{theorem}\label{kratkDB} Suppose either that $X$ has isolated singularities or  $X$ is lci.  If $X$ is $k$-rational, then $X$ is $k$-Du Bois. 
\end{theorem}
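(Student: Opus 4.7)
The approach follows Kov\'acs \cite{Kovacs99}. The hypothesis of $k$-rationality means that the composition
$$\Omega^p_X \xrightarrow{\phi^p} \uOp_X \xrightarrow{\psi^p} \bD_X(\uOnp_X)$$
is a quasi-isomorphism for $0 \le p \le k$. Hence $\phi^p$ admits a left inverse in $D^b_{\mathrm{coh}}(X)$, and there is a direct sum decomposition
$$\uOp_X \simeq \Omega^p_X \oplus \mathcal{C}^p$$
in the derived category, where $\mathcal{C}^p$ is supported on $\Sigma$ (since $\phi^p$ is already a quasi-isomorphism on the smooth locus). The goal then reduces to showing $\mathcal{C}^p \simeq 0$ for all $p \le k$.

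To exploit this splitting, I would pass to a proper setting: after shrinking $X$ to a small (Stein or affine) neighborhood of $\Sigma$, compactify to a proper variety $\bar X$ whose singularities agree with those of $X$ in a neighborhood of $\Sigma$. The splitting of $\phi^p$ descends to $\bar X$ since $\phi^p$ is a quasi-isomorphism away from $\Sigma$. On the proper variety $\bar X$, Theorem~\ref{thm-e1} gives $\mathbb H^q(\bar X; \uOp_{\bar X}) = \Gr^p_F H^{p+q}(\bar X; \Cee)$, and the splitting makes $\mathbb H^q(\bar X; \mathcal{C}^p)$ a direct summand of $\Gr^p_F H^{p+q}(\bar X;\Cee)$. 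A Kov\'acs-type argument, combining strictness of the Hodge filtration with the $E_1$-degeneration of Corollary~\ref{DBdegen}, should force $\mathbb H^*(\bar X; \mathcal{C}^p) = 0$; applying the same argument after twisting by an ample line bundle on $\bar X$ and invoking Serre vanishing then yields $\mathcal{C}^p \simeq 0$ sheaf-theoretically near $\Sigma$. In the lci case, one may alternatively use Theorem~\ref{thmdim} (which gives $\codim \Sigma \ge 2k+1$) together with Proposition~\ref{KDexact} to reduce inductively on $\dim \Sigma$ by slicing with generic hyperplane sections, eventually to the isolated lci case.

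The main obstacle is the Hodge-theoretic step on $\bar X$. In Kov\'acs's original $p=0$ case, the fact that $F^0 H^q(\bar X;\Cee) = H^q(\bar X;\Cee)$ suffices: the map $H^q(\bar X;\scrO_{\bar X}) \to \Gr^0_F H^q(\bar X;\Cee)$ is then automatically surjective, and combined with the splitting must be an isomorphism. For $p \ge 1$, the analogous surjectivity $\mathbb H^q(\bar X; \Omega^p_{\bar X}) \twoheadrightarrow \Gr^p_F H^{p+q}(\bar X;\Cee)$ generally fails, as the K\"ahler spectral sequence need not degenerate at $E_1$ for singular varieties (cf.\ Remark~\ref{rem-examples}(2)). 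The technical crux is therefore to show that in the presence of the $k$-rationality splitting, any nonzero $\mathcal{C}^p$ would produce classes in $\Gr^p_F H^*(\bar X;\Cee)$ that are incompatible with the strict morphisms in the category of mixed Hodge structures, yielding a contradiction. This is where the dichotomy in the hypothesis enters: the isolated case allows one to localize to finitely many points and reduce the Hodge-theoretic vanishing to a local cohomology statement, while the lci case is handled via the Bertini reduction outlined above.
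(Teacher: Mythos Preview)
Your overall strategy matches the paper's: obtain left inverses to $\phi^p$ from $k$-rationality, run a Kov\'acs-style argument in the isolated case, and use Bertini slicing in the lci case. But two genuine gaps remain.

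\medskip

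\textbf{The isolated case.} You correctly identify that for $p\ge1$ the surjectivity $H^q(\bar X;\Omega^p_{\bar X})\twoheadrightarrow\Gr_F^pH^{p+q}(\bar X;\Cee)$ can fail, but your proposed resolution (``strictness of morphisms of MHS'' or ``twist by an ample line bundle and use Serre vanishing'') does not work as stated. Twisting $\uOp_X$ by a line bundle severs the connection to the constant sheaf $\Cee$ on which the degeneration argument rests, so Serre vanishing does not help here. The paper's fix is to stop working graded-piece-by-graded-piece and instead assemble the individual splittings into a single filtered left inverse $H_k\colon\uOb_X/F^{k+1}\to\Omega_X^\bullet/\sigma^{\ge k+1}$ (Lemma~\ref{lemma5.2}). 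The point is that the needed surjectivity is then automatic: $H^i(\bar X;\Cee)\to\mathbb{H}^i(\bar X;\uOb_{\bar X}/F^{k+1})$ is surjective by Corollary~\ref{DBdegen}(i), and this map factors through $\mathbb{H}^i(\bar X;\Omega_{\bar X}^\bullet/\sigma^{\ge k+1})$. With this in hand, the local-cohomology argument of Kov\'acs goes through verbatim (Proposition~\ref{dim=0case}) and no twisting is required.

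\medskip

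\textbf{The Bertini reduction.} Saying ``reduce inductively to the isolated case'' hides the main difficulty: why does a general hyperplane section $H$ inherit left inverses from $X$? By Proposition~\ref{NAV} and Corollary~\ref{morphtriangles}, $\uOp_H$ sits in a triangle involving $\uOp_X\otimes^{\mathbb{L}}\scrO_H$ and $\underline{\Omega}^{p-1}_H$, and to produce $\bar h^p$ on $H$ from $h^p$ on $X$ you need $h^p$ and $h^{p-1}$ to commute with wedge product by $df$. The paper formalizes this as the notion of a \emph{compatible} set of left inverses, checks in Lemma~\ref{existsleftinv} that the inverses arising from $k$-rationality are compatible (because $\psi^p$ is built from maps that all commute with $\alpha\wedge$), and then runs the induction on $\dim\Sigma_k$ in Theorem~\ref{mainkratthm}. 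Without compatibility, the hyperplane descent step does not go through.
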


 Assuming for the moment Theorem \ref{kratkDB}, we note that the $k$-rational assumption adds additional 
 Hodge symmetries  to the $k$-Du Bois condition:  If $X$ is $k$-Du Bois, then $X$ is $k$-rational $\iff$ the map $\psi^p \colon \uOp_X\to  \bD_X(\uOnp_{X})$ is a quasi-isomorphism for all $p\le k$. If $X$ is proper, these assumptions  lead  to a Hodge symmetry,  Serre  duality for the Hodge-Du Bois numbers:
\begin{corollary}\label{cor-sym1} 
If $Y$ is a compact complex algebraic variety of dimension $n$ with lci $k$-rational singularities, then, for $0\le p\le k$, 
\begin{equation*}
\uh^{p,q}(Y)=\dim \Gr_F^p H^{p+q}(Y) = \dim \Gr_F^{n-p} H^{2n-(p+q)}(Y)=\uh^{n-p,n-q}(Y).
\end{equation*}
 
In particular,  taking  $p+q =n$   gives   $\uh^{p,n-p}=\uh^{n-p,p}$ for  $p\le k$. 
\end{corollary}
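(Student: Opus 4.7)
The plan is to extract from the combined $k$-rational and $k$-Du Bois hypotheses a self-duality quasi-isomorphism $\uOp_Y \simeq \bD_Y(\uOnp_Y)$ for $p \le k$, and then apply Grothendieck/Serre duality on the proper scheme $Y$ to compare hypercohomology dimensions.

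First, since $Y$ is lci and $k$-rational, Theorem~\ref{kratkDB} gives that $Y$ is also $k$-Du Bois, so the K\"ahler-to-Du Bois map $\phi^p \colon \Omega^p_Y \to \uOp_Y$ is a quasi-isomorphism for $0 \le p \le k$. Combined with the defining property of $k$-rational singularities, namely that $\psi^p \circ \phi^p \colon \Omega^p_Y \to \bD_Y(\uOnp_Y)$ is a quasi-isomorphism, we conclude that $\psi^p \colon \uOp_Y \to \bD_Y(\uOnp_Y)$ itself is a quasi-isomorphism for $0 \le p \le k$. This is the crucial structural statement: a $k$-rational lci singularity makes the graded pieces of the filtered de Rham complex Serre-self-dual in the first $k$ positions.

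Next, I would invoke Grothendieck duality on the proper variety $Y$. Using $\bD_Y(-) = R\Hom(-, \omega_Y^\bullet)[-n]$ and the standard pairing
\[
\mathbb{H}^q(Y; \bD_Y(\mathcal{F})) \;\cong\; \mathbb{H}^{n-q}(Y; \mathcal{F})^{\vee}
\]
for any coherent complex $\mathcal{F} \in D_b^{\text{coh}}(Y)$, applied to $\mathcal{F} = \uOnp_Y$, yields
\[
\dim \mathbb{H}^q(Y; \uOp_Y) \;=\; \dim \mathbb{H}^q(Y; \bD_Y(\uOnp_Y)) \;=\; \dim \mathbb{H}^{n-q}(Y; \uOnp_Y)
\]
for every $q$ and every $p \le k$.

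Finally, I would translate this into the statement of the corollary. By Theorem~\ref{thm-e1}, the Hodge-to-Du Bois spectral sequence degenerates at $E_1$, so $\uh^{p,q}(Y) = \dim \mathbb{H}^q(Y; \uOp_Y) = \dim \Gr_F^p H^{p+q}(Y)$, and likewise $\uh^{n-p,n-q}(Y) = \dim \mathbb{H}^{n-q}(Y; \uOnp_Y) = \dim \Gr_F^{n-p} H^{2n-p-q}(Y)$. Chaining these equalities with the duality identity above gives the desired symmetry $\uh^{p,q}(Y) = \uh^{n-p,n-q}(Y)$ and, in particular, $\uh^{p,n-p}(Y) = \uh^{n-p,p}(Y)$ upon specializing to $p+q = n$. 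The main (mild) obstacle is bookkeeping the shift in $\bD_Y$ correctly so that the Serre duality pairing lands at the right cohomological degree; everything else is a formal consequence of the self-duality $\uOp_Y \simeq \bD_Y(\uOnp_Y)$ established in the first step.
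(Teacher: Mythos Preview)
Your proof is correct and follows essentially the same route as the paper: deduce from Theorem~\ref{kratkDB} that $\psi^p$ is a quasi-isomorphism for $p\le k$, then apply Grothendieck duality on the proper scheme $Y$ to obtain $\mathbb{H}^q(Y;\uOp_Y)\cong\mathbb{H}^{n-q}(Y;\uOnp_Y)^\vee$, and translate via Theorem~\ref{thm-e1}. The only difference is presentational---you spell out the two-out-of-three reasoning for $\psi^p$ being a quasi-isomorphism, whereas the paper records that observation in the text immediately preceding the corollary.
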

\begin{proof} Using Theorem \ref{thm-e1}, we get the 
following identifications
$$\Gr_F^p H^{p+q}(Y) \cong \mathbb{H}^q(Y; \uOp_Y)\cong\mathbb{H}^q(Y; \bD_Y(\uOnp_{Y}))\cong \mathbb{H}^{n-q}(Y; \uOnp_{Y})^\vee,$$
where the middle isomorphism is given by the quasi-isomorphism $\psi^p$ (for $p\le k$). \end{proof}

It remains to discuss the Hodge symmetry $h^{p,q}=h^{q,p}$, which is induced by   complex conjugation in the smooth case. We recall that the cohomology of a compact singular algebraic variety $Y$ carries a mixed Hodge structure $(H^*(Y),F^\bullet, W_\bullet)$. The Hodge-Deligne numbers $h^{p,r}_i=\Gr^p_F\Gr_{p+r}^WH^i(Y)$  satisfy the symmetry given by  conjugation: $h^{p,r}_i=h^{r,p}_i$ (as $\Gr_{p+r}^WH^i(Y)$ is a pure Hodge structure). Since $Y$ is compact,  the weights on $H^i(Y)$ are at most $i$, and in fact between $2i-2n$ and $i$ if $i\geq n$. It follows that, for $i\le n$, 
$\uh^{p,i-p}=\sum_{r=0}^{i-p} h^{p,r}_i$. However, the Hodge-Du Bois numbers do not satisfy the same kind of symmetry as they reflect only the Hodge filtration $F^\bullet$. In fact, we note the following:
\begin{lemma}\label{ineq-sym}
Let $Y$ be a compact complex algebraic variety of dimension $n$.  
For $0\le p\le i\le n$, 
$$ \sum_{a=0}^p \uh^{i-a,a} \le 
 \sum_{a=0}^p \uh^{a,i-a}.$$
  Furthermore, equality holds above for all $p\le k$ $\iff$ $\uh^{p,i-p}=\uh^{i-p,p}$ for all $p\le k$   $\iff$ $\Gr_F^pW_{i-1}H^i(Y)=0$ for all $p\le k$. 
\end{lemma}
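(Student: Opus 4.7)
The plan is to rewrite the partial sums $S_p:=\sum_{a=0}^p\uh^{a,i-a}(Y)$ (the right-hand side) and $T_p:=\sum_{a=0}^p\uh^{i-a,a}(Y)$ (the left-hand side) as dimensions of pieces of the Hodge filtration on $H^i(Y)$, peel off the top-weight piece $\Gr^W_iH^i(Y)$, and then reduce the inequality $T_p\le S_p$ to a standard fact about mixed Hodge structures with weights strictly below $i$.

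Since $\uh^{p,q}=\dim\Gr^p_FH^{p+q}(Y)$ by Definition~\ref{defHodgeno}, and $F^0H^i=H^i$, $F^{i+1}H^i=0$, a telescoping computation gives
$$S_p=\dim H^i(Y)-\dim F^{p+1}H^i(Y),\qquad T_p=\dim F^{i-p}H^i(Y),$$
and hence $S_p-T_p=\dim H^i(Y)-\dim F^{p+1}H^i(Y)-\dim F^{i-p}H^i(Y)$. Since $Y$ is compact, $H^i(Y)$ has weights in $[0,i]$, and the strictness of $F$ with respect to $W$ splits this difference into the contribution from $W_{i-1}H^i(Y)$ and the contribution from $\Gr^W_iH^i(Y)$. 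The latter vanishes because for the pure Hodge structure of weight $i$ on $\Gr^W_iH^i(Y)$ one has $\dim F^{p+1}+\dim F^{i-p}=\dim\Gr^W_iH^i(Y)$, yielding the key identity
$$S_p-T_p=\dim W_{i-1}H^i(Y)-\dim F^{p+1}W_{i-1}H^i(Y)-\dim F^{i-p}W_{i-1}H^i(Y).$$

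The desired inequality now reduces to the general fact that, for a mixed Hodge structure $V$ with weights at most $w_0$, $\dim F^aV+\dim F^bV\le\dim V$ whenever $a+b\ge w_0+1$. This is proved pure piece by pure piece, using the relation $\dim F^a\Gr^W_w+\dim F^{w+1-a}\Gr^W_w=\dim\Gr^W_w$ together with the inclusion $F^b\Gr^W_w\subseteq F^{w+1-a}\Gr^W_w$. Applied with $V=W_{i-1}H^i(Y)$, $w_0=i-1$, $a=p+1$, $b=i-p$, this proves $T_p\le S_p$; moreover, on each $\Gr^W_w$ with $w\le i-1$, the inequality is an equality iff $\Gr^a_F\Gr^W_w=0$ for every $a\in[w-p,i-p-1]$.

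For the equivalences in the equality case, the first (cumulative sums equal iff terms equal) follows by a one-line induction on $p$. For the second, $S_p=T_p$ for all $p\in[0,k]$ is equivalent to the vanishing of $\Gr^a_F\Gr^W_w$ for all $w\le i-1$ and all $a\in\bigcup_{p=0}^k[w-p,i-p-1]$; in particular it forces the vanishing at each extremal value $a=w-p$, $p\in[0,k]$. On each pure piece $\Gr^W_w$, the conjugation symmetry $\dim\Gr^a_F\Gr^W_w=\dim\Gr^{w-a}_F\Gr^W_w$, combined with the automatic vanishing $\Gr^p_F\Gr^W_w=0$ for $p>w$, turns this extremal vanishing into $\Gr^p_F\Gr^W_w=0$ for all $p\in[0,k]$; summing over $w\le i-1$ gives $\Gr^p_FW_{i-1}H^i(Y)=0$ for $p\le k$. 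The converse is obtained by reversing the conjugation step: vanishing of $\Gr^p_FW_{i-1}H^i(Y)$ for $p\le k$ also forces the vanishing of $\Gr^{i-p}_FW_{i-1}H^i(Y)$ for $p\le k$, whence $\dim\Gr^p_FW_{i-1}H^i=\dim\Gr^{i-p}_FW_{i-1}H^i$ and therefore $\uh^{p,i-p}=\uh^{i-p,p}$. The main bookkeeping task is this conjugation argument; I do not expect any substantive obstacle beyond careful index tracking.
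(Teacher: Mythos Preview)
Your argument is correct and takes a genuinely different route from the paper. The paper expands both sides directly in terms of Hodge--Deligne numbers $h^{r,s}_i$, obtaining
\[
\sum_{a=0}^p \uh^{i-a,a} = \sum_{\substack{i-p\le s\le i\\ r+s\le i}} h^{r,s}_i,
\qquad
\sum_{a=0}^p \uh^{a,i-a} = \sum_{\substack{r\le p\\ r+s\le i}} h^{r,s}_i,
\]
and then observes that the first index set is contained in the second (since $i-p\le s$ and $r+s\le i$ force $r\le p$); the difference is exactly the set $\{r\le p,\ s\le i-p-1\}$, giving the equality criterion directly. Your approach instead telescopes to $T_p=\dim F^{i-p}H^i$ and $S_p=\dim H^i-\dim F^{p+1}H^i$, uses strictness of $F$ with respect to $W$ to strip off the pure weight-$i$ piece (where the contribution cancels), and isolates a clean general statement: for any mixed Hodge structure of weights at most $w_0$, $\dim F^a+\dim F^b\le\dim V$ whenever $a+b\ge w_0+1$. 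This is arguably more conceptual, since it makes explicit that the obstruction to the symmetry lives entirely in $W_{i-1}H^i$, and it packages the key estimate as a reusable lemma. The paper's argument is shorter and more combinatorial, and arrives at the explicit vanishing condition $h^{r,s}_i=0$ for $r\le k$, $r+s\le i-1$ without intermediate steps.

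One small point worth tightening in your converse step: when you say ``reversing the conjugation step'' shows that $\Gr^p_FW_{i-1}H^i=0$ for $p\le k$ forces $\Gr^{i-p}_FW_{i-1}H^i=0$, note that conjugation on $\Gr^W_w$ sends $\Gr^{i-p}_F$ to $\Gr^{w-i+p}_F$, not to $\Gr^p_F$. The point is that $w-i+p\le p-1\le k$ (or is negative), so the conjugate piece is indeed among those already assumed to vanish. You flag this as ``index tracking'', and it is, but it is the one place where the argument is not symmetric in the obvious way.
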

\begin{proof} 
Clearly $\sum_{a=0}^p \uh^{i-a,a} = 
 \sum_{a=0}^p \uh^{a,i-a}$ for all $p\le k$ $\iff$ $\uh^{p,i-p}=\uh^{i-p,p}$ for all $p\le k$. 

For a fixed $p$, we have
 $$\sum_{a=0}^p \uh^{i-a,a} = \sum_{a=0}^p \sum_{q\leq a}h^{i-a, q}_i =  \sum_{a=0}^p \sum_{q\leq a}h^{q,i-a}_i=\sum_{\substack {i-p \le s \le i \\r+s \le i}}h^{r, s}_i,$$
by the Hodge symmetries,  whereas
 $$\sum_{a=0}^p \uh^{a,i-a} = \sum_{a=0}^p \sum_{a+q \le i}h^{a, q}_i=\sum_{\substack { r \le p \\r+s \le i}}h^{r, s}_i.$$
Note that, if  $i-p \le s \le i $ and $r+s \le i$, then $r\le i-s \le p$, so the second sum is greater that  the first, giving the inequality. 
For a given $p$, equality  holds $\iff$ $h^{r, s}_i = 0$ for $r\leq p$ and $s\leq i-p -1$. Moreover, $h^{r, s}_i = 0$ for $r\leq p$ and $s\leq i-p -1$ for some $p\le k$ $\iff$ $h^{r, s}_i = 0$ for $r\leq k$, $r+s \leq i-1$.  This is equivalent to: $\Gr_F^pW_{i-1}H^i(Y)=0$ for $p\le k$.   
 \end{proof}

 Recall that,  for any resolution $\pi:\hat Y\to Y$, 
\begin{equation}\label{intrepret-w1}
W_{i-1} H^i(Y)=\Ker \left(H^i(Y)\xrightarrow{\pi^*}H^i(\hat Y)\right)
\end{equation}
(e.g. \cite[Cor.\ 5.42]{PS}).  Using this, we obtain:
\begin{theorem}\label{cor-sym2}
 If $Y$ is a compact complex algebraic variety of dimension $n$ with either isolated or lci $k$-rational singularities, then 
 $$\uh^{p,q}=\uh^{q,p}.$$
for $0\le p\le k$ and $0\le q\le n$. 
\end{theorem}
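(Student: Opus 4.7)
The plan is to reduce the Hodge symmetry $\uh^{p,q}=\uh^{q,p}$ to the weight vanishing $\Gr_F^p W_{i-1} H^i(Y) = 0$ for $p \leq k$ (via Lemma~\ref{ineq-sym}), and then to establish this weight vanishing by showing that for any resolution $\pi\colon \hat{Y} \to Y$, the pullback $\pi^*$ induces an injection $\Gr_F^p H^i(Y) \hookrightarrow \Gr_F^p H^i(\hat{Y})$ for every $p \leq k$. Since $\pi^*$ is a morphism of mixed Hodge structures and $F$ is strict, such injectivity is equivalent to $\Gr_F^p \Ker(\pi^*) = 0$, which by~\eqref{intrepret-w1} amounts to the desired weight vanishing. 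For $p+q \leq n$ the conclusion is then immediate from Lemma~\ref{ineq-sym}; for $p+q > n$ (where Lemma~\ref{ineq-sym} is not directly stated), the same unwinding works, using that the weight vanishing is equivalent to $h^{r,s}_i = 0$ for $r \leq k$ and $r+s \leq i-1$, together with Deligne's Hodge symmetry $h^{a,b}_i = h^{b,a}_i$ on the pure pieces $\Gr_i^W H^i(Y)$, to conclude $\uh^{p,q}=h^{p,q}_{p+q}=h^{q,p}_{p+q}=\uh^{q,p}$.

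For the key injectivity of $\Gr_F^p(\pi^*)$, I would first invoke Theorem~\ref{kratkDB} to deduce that $Y$ is $k$-Du Bois, so $\phi^p\colon \Omega_Y^p \xrightarrow{\sim} \uOp_Y$ for $p \leq k$. By Theorem~\ref{thm-e1}, $\Gr_F^p H^i(Y)\cong \mathbb{H}^{i-p}(Y;\uOp_Y)$, and $\Gr_F^p(\pi^*)$ is induced by the natural comparison $\alpha_p\colon \uOp_Y \to R\pi_*\Omega_{\hat{Y}}^p$. It therefore suffices to show that $\alpha_p$ is a split monomorphism in the derived category. Tracing through Lemma~\ref{lem-psi}, I would factor $\psi^p$ as $\uOp_Y \xrightarrow{\alpha_p} R\pi_*\Omega_{\hat{Y}}^p \xrightarrow{\beta_p} \bD_Y(\uOnp_Y)$, where $\beta_p$ arises from applying Grothendieck duality to the natural map $\uOnp_Y \to R\pi_*\Omega_{\hat{Y}}^{n-p}$. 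Under the identification $\bD_Y(\uOnp_Y) \cong R\pi_*\Omega_{\hat{Y}}^p(\log E)$ from Lemma~\ref{lemma-onp} (applicable since $\codim \Sigma \geq 2k+1$ in the lci case by Theorem~\ref{thmdim}, and since $\dim \Sigma = 0 < n-p$ in the isolated case under the mild hypothesis $n \geq k+1$), the map $\beta_p$ becomes the morphism induced by the natural inclusion $\Omega_{\hat{Y}}^p \hookrightarrow \Omega_{\hat{Y}}^p(\log E)$. The $k$-rational condition together with the quasi-isomorphism $\phi^p$ then implies that $\beta_p\circ\alpha_p$ is a quasi-isomorphism, so $\alpha_p$ admits the left inverse $(\beta_p\circ\alpha_p)^{-1}\circ\beta_p$ and is a split monomorphism in the derived category; applying $\mathbb{H}^\bullet(Y;-)$ yields the required injection on graded pieces.

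The hard part is the explicit identification of $\beta_p$ as the map induced by the inclusion of K\"ahler into log differentials on $\hat{Y}$. This requires carefully combining Grothendieck duality on the proper morphism $\pi$ with the presentation $\uOnp_Y \cong R\pi_*\Omega^{n-p}_{\hat{Y}}(\log E)(-E)$ from Theorem~\ref{relative} (valid once $\codim \Sigma > n-p$) and with the self-duality pairing $\bD_{\hat{Y}}(\Omega^{n-p}_{\hat{Y}}(\log E)(-E)) \cong \Omega^p_{\hat{Y}}(\log E)$. Once this matching is in place, the rest of the proof is essentially formal, and the split injectivity of $\alpha_p$ drives the entire conclusion.
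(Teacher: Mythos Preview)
Your proof is correct and follows essentially the same route as the paper: reduce via Lemma~\ref{ineq-sym} to the vanishing $\Gr_F^p W_{i-1}H^i(Y)=0$, identify this with injectivity of $\Gr_F^p(\pi^*)$ via \eqref{intrepret-w1}, and then deduce that injectivity from the factorization of $\psi^p$ through $R\pi_*\Omega^p_{\hat Y}$ together with the $k$-rational hypothesis and Theorem~\ref{kratkDB}.

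The only difference is in packaging. The paper phrases the last step at the level of mixed Hodge structures: it assembles $\psi^p$ on hypercohomology into the $\Gr_F^p$-piece of a morphism $\Psi^i\colon H^i(Y)\to H^{2n-i}(Y)^\vee(-n)$ of Hodge structures that visibly factors through $\pi^*$, and then appeals to strictness to conclude that $\Gr_F^p(\pi^*)$ is injective. You instead stay in the derived category and observe that since $\psi^p=\beta_p\circ\alpha_p$ is a quasi-isomorphism (here $\phi^p$ being an isomorphism is used), $\alpha_p$ is a split monomorphism, hence injective on all hypercohomology. Your version is arguably cleaner, and it avoids invoking the Hodge-theoretic map $\Psi^i$; on the other hand the paper's formulation makes the connection to Poincar\'e duality and the resolution more transparent. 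One remark: the ``hard part'' you flag---the explicit identification of $\beta_p$ with the map induced by $\Omega^p_{\hat Y}\hookrightarrow\Omega^p_{\hat Y}(\log E)$ via Lemma~\ref{lemma-onp}---is not actually needed for your argument. All you use is that $\psi^p$ factors through $\alpha_p$, which is immediate from the construction in Lemma~\ref{lem-psi}, and that $\psi^p$ is a quasi-isomorphism; the precise form of $\beta_p$ never enters. Your more careful treatment of the case $p+q>n$ is a welcome addition, since Lemma~\ref{ineq-sym} as stated only covers $i\le n$.
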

\begin{proof}
In view of the discussion above, we define the discrepancy 
$$\delta_i^p:=\dim \Gr^p_F W_{i-1} H^i(Y)=\dim \Gr^p_F\Ker \left(H^i(Y)\xrightarrow{\pi^*}H^i(\hat Y)\right).$$
By Lemma~\ref{ineq-sym}, the equality $\uh^{p,i-p}=\uh^{i-p,p}$ holds for $p\le k$ $\iff$ $\delta_i^p=0$ for $p\le k$. 
The map $\psi^p$ occurring in the definition of higher rationality (Definition~\ref{defkrat} and  Lemma~\ref{lem-psi}) factors through the resolution $\pi:\hat Y\to Y$, and at the level of cohomology corresponds to the $\Gr^p_F$ piece (see also Corollary~\ref{cor-sym1}) of the natural map
$$\Psi^i:H^i(Y)\xrightarrow{\pi^*} H^i(\hat Y)\xrightarrow[PD]{\sim}H^{2n-i}(\hat Y)^\vee(-n)\xrightarrow{(\pi^*)^\vee} H^{2n-i}( Y)^\vee(-n),$$where all spaces are endowed with the natural Hodge structures. On the graded piece $\Gr_F^pH^i(Y) = H^{i-p}(Y; \uOp_Y)$, $\Gr_F^p\Psi^i$ is the map $\psi^p\colon H^{i-p}(Y; \uOp_Y) \to H^{n-i+p}(Y; \uOnp_Y)\spcheck$, which is an isomorphism if $Y$ is both $k$-rational and $k$-Du Bois, and in particular if $Y$ is $k$-rational and has either  isolated singularities or lci singularities.  
By the strictness of morphisms of Hodge filtrations, if $\Gr_F^p\Psi^i$ is an isomorphism then $\pi^*$ is injective on $\Gr_F^p$  and hence $\delta^p_i=0$.  Thus, $k$-rationality implies $\delta^p_i=0$ for $p\le k$ and all $i$, which in turn means $\uh^{p,i-p}=\uh^{i-p,i}$ in this range.   
\end{proof}

\begin{remark}
As noted by one of the referees, in the above proof as well as in the proof of Lemma~\ref{lem-psi}, the main point is to apply relative duality to a resolution of singularities morphism.  Thus the proof does not take into account the full information of a hyperresolution. 
\end{remark}

\section{Proof of   Theorem~\ref{mainThm} and Corollary~\ref{CYdef}}\label{section4}
We turn now to the global setting of a deformation of a compact analytic space or proper scheme $Y$, and to the question of the local freeness of $\Omega^p_{\cY/S}$.

\begin{theorem}\label{thm5} Let $f\colon \cY \to \Spec A$ be a proper morphism of complex spaces, where $A$ is an Artin local $\Cee$-algebra, with closed fiber $Y$.  Let $(\mathcal{F}^\bullet, d)$ be a bounded complex of coherent sheaves on $\cY$, flat over $A$, where $d\colon \mathcal{F}^i\to \mathcal{F}^{i+1}$ is $A$-linear, but not necessarily $\scrO_{\cY}$-linear. Finally suppose that the natural map $\mathbb{H}^i(\cY; \mathcal{F}^\bullet) = \Ar^if_*\mathcal{F}^\bullet \to \mathbb{H}^i(Y; \mathcal{F}^\bullet|Y)$ is surjective for all $i$. Then $\mathbb{H}^i(\cY; \mathcal{F}^\bullet)$ is a finite    $A$-module whose length  satisfies:
$$\ell(\mathbb{H}^i(\cY; \mathcal{F}^\bullet)) = \ell(A)\dim \mathbb{H}^i(Y; \mathcal{F}^\bullet|Y).$$
\end{theorem}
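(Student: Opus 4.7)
My plan is to prove the theorem by induction on $\ell(A)$, with the conclusion strengthened to the statement that $\mathbb{H}^i(\cY;\mathcal{F}^\bullet)$ is a free $A$-module of rank $h^i:=\dim_{\Cee}\mathbb{H}^i(Y;\mathcal{F}^\bullet|Y)$ and that the base change map $\mathbb{H}^i(\cY;\mathcal{F}^\bullet)\otimes_A\Cee\to\mathbb{H}^i(Y;\mathcal{F}^\bullet|Y)$ is an isomorphism; either of these implies the claimed length formula. Finite generation of $\mathbb{H}^i(\cY;\mathcal{F}^\bullet)$ as an $A$-module is automatic from properness and the hypercohomology spectral sequence. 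The base case $\ell(A)=1$ is trivial, and for the inductive step I would choose an ideal $I\subset A$ of length one (so $I\cong\Cee$ as $A$-modules and $\mathfrak{m}_A\cdot I=0$), set $A'=A/I$ and $\cY':=\cY\times_{\Spec A}\Spec A'$.

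Flatness of $\mathcal{F}^\bullet$ over $A$ together with the identification $\mathcal{F}^\bullet\otimes_A I\cong\mathcal{F}^\bullet|Y$ turns $0\to I\to A\to A'\to 0$ into a short exact sequence of complexes of coherent sheaves with $A$-linear differentials
\[
0\to\mathcal{F}^\bullet|Y\xrightarrow{\iota}\mathcal{F}^\bullet\to\mathcal{F}^\bullet|\cY'\to 0,
\]
whose associated long exact sequence of hypercohomology supplies maps $\iota_*$, $\beta^i$, and connecting maps $\delta^i:\mathbb{H}^i(\cY';\mathcal{F}^\bullet|\cY')\to\mathbb{H}^{i+1}(Y;\mathcal{F}^\bullet|Y)$. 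The hypothesis of the theorem passes automatically to $\cY'$: the surjection $\mathbb{H}^i(\cY;\mathcal{F}^\bullet)\to\mathbb{H}^i(Y;\mathcal{F}^\bullet|Y)$ factors through $\mathbb{H}^i(\cY';\mathcal{F}^\bullet|\cY')$, so the second arrow is surjective as well, and the inductive hypothesis applies to $\cY'/\Spec A'$.

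The heart of the argument, and the main obstacle, is showing that every $\delta^i$ vanishes; this is exactly where the strengthened inductive hypothesis is needed. The image $M$ of $\beta^i$ is an $A'$-submodule of $\mathbb{H}^i(\cY';\mathcal{F}^\bullet|\cY')$, and by the hypothesis on $\cY$ its projection to $\mathbb{H}^i(Y;\mathcal{F}^\bullet|Y)$ is surjective. Using the inductive identification $\mathbb{H}^i(\cY';\mathcal{F}^\bullet|\cY')\otimes_{A'}\Cee\xrightarrow{\sim}\mathbb{H}^i(Y;\mathcal{F}^\bullet|Y)$, this says that $M$ surjects onto the quotient $\mathbb{H}^i(\cY';\mathcal{F}^\bullet|\cY')/\mathfrak{m}_{A'}\mathbb{H}^i(\cY';\mathcal{F}^\bullet|\cY')$, so Nakayama's lemma forces $M=\mathbb{H}^i(\cY';\mathcal{F}^\bullet|\cY')$ and hence $\delta^i=0$. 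Length equality for $\cY'$ alone would not let one run this Nakayama step, which is why I strengthen the inductive statement.

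With $\delta^i=0$ the long exact sequence decomposes into short exact sequences
\[
0\to\mathbb{H}^i(Y;\mathcal{F}^\bullet|Y)\xrightarrow{\iota_*}\mathbb{H}^i(\cY;\mathcal{F}^\bullet)\xrightarrow{\beta^i}\mathbb{H}^i(\cY';\mathcal{F}^\bullet|\cY')\to 0,
\]
and additivity of length then gives $\ell(\mathbb{H}^i(\cY;\mathcal{F}^\bullet))=h^i+\ell(A')h^i=\ell(A)h^i$, which is the assertion of the theorem. To carry the strengthened statement to $\cY$, I would observe that $\iota$ factors through multiplication by a generator of $I\subset\mathfrak{m}_A$, so $\iota_*\otimes_A\Cee=0$; tensoring the above short exact sequence with $\Cee$ over $A$ yields $\mathbb{H}^i(\cY;\mathcal{F}^\bullet)\otimes_A\Cee\cong\mathbb{H}^i(\cY';\mathcal{F}^\bullet|\cY')\otimes_{A'}\Cee\cong\Cee^{h^i}$. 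Combined with the length equality, Nakayama together with a length comparison to $A^{h^i}$ forces $\mathbb{H}^i(\cY;\mathcal{F}^\bullet)\cong A^{h^i}$, and the hypothesis surjection then becomes a surjection of $\Cee$-vector spaces of the same dimension $h^i$, hence an isomorphism, closing the induction.
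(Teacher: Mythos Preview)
Your argument is correct and reaches the same conclusion, but the route differs from the paper's. The paper does \emph{not} strengthen the inductive hypothesis to freeness plus base-change isomorphism; instead it first proves a separate lemma, by induction on $\ell(M)$, that the natural map $\Psi_M\colon \mathbb{H}^i(\cY;\mathcal{F}^\bullet)\otimes_A M \to \mathbb{H}^i(\cY;\mathcal{F}^\bullet\otimes_A M)$ is surjective for every finite $A$-module $M$. Taking $M=A/I$ immediately gives surjectivity of $\beta^i$, so the long exact sequence splits into short exact sequences and the length formula follows by a straight induction on $\ell(A)$ with no Nakayama step. Your approach trades this preliminary lemma for a loaded induction: you carry freeness and the base-change isomorphism through the induction, which lets Nakayama on $\mathbb{H}^i(\cY';\mathcal{F}^\bullet|\cY')$ force $\beta^i$ onto. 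The payoff of your version is that you get freeness of $\mathbb{H}^i(\cY;\mathcal{F}^\bullet)$ as a byproduct (the paper only states the length formula, which is all it needs downstream); the payoff of the paper's version is that the $\Psi_M$-surjectivity lemma is a clean, reusable statement and the main induction stays minimal. Both are standard variants of the cohomology-and-base-change machinery.
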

\begin{proof} This is a minor variation on very standard arguments. Note that $\mathbb{H}^i(\cY; \mathcal{F}^\bullet)$ is a finite $A$-module since it is the abutment of a spectral sequence with $E_1$ page $E_1^{p,q}= H^q(\cY; \mathcal{F}^p)$ and such that all of the differentials in the spectral sequence are $A$-module homomorphisms. Next we show that, for every finite $A$-module $M$, the natural map 
$$\Psi_M\colon \mathbb{H}^i(\cY; \mathcal{F}^\bullet)\otimes_AM \to \mathbb{H}^i(\cY; \mathcal{F}^\bullet\otimes _AM)$$ is surjective. The proof is via induction on $\ell(M)$, the case $\ell(M) =1$ being the hypothesis of the theorem. The inductive step follows from: given an exact sequence $0\to M' \to M\to M'' \to 0$ such that $\Psi_{M'}$ and $\Psi_{M''}$ are surjective, then $\Psi_M$ is surjective. This follows from the commutative diagram
$$\begin{CD} 
\mathbb{H}^i(\cY; \mathcal{F}^\bullet)\otimes_AM' @>>> \mathbb{H}^i(\cY; \mathcal{F}^\bullet)\otimes_AM @>>> \mathbb{H}^i(\cY; \mathcal{F}^\bullet)\otimes_AM'' @>>> 0\\
@VV{\Psi_{M'}}V @VV{\Psi_M}V @VV{\Psi_{M''}}V @.\\
\mathbb{H}^i(\cY; \mathcal{F}^\bullet\otimes _AM') @>>> \mathbb{H}^i(\cY; \mathcal{F}^\bullet\otimes _AM)@>>> \mathbb{H}^i(\cY; \mathcal{F}^\bullet\otimes _AM'') @.
\end{CD}$$
where the top row is exact since tensor product is right exact and the second is exact since $\mathcal{F}^\bullet$ is $A$-flat.

To prove the theorem, we argue by induction on $\ell(A)$. The result is clearly true if $\ell(A) = 1$. For the inductive step, write $A$ as a small extension $0 \to I \to A \to A/I \to 0$, so that $I\cong A/\mathfrak{m}$ and $\mathcal{F}^\bullet \otimes_A I  \cong \mathcal{F}^\bullet \otimes_A (A/\mathfrak{m})=\mathcal{F}^\bullet|Y$. By flatness, there is an exact sequence
$$\cdots \to \mathbb{H}^i(\cY; \mathcal{F}^\bullet \otimes_A I) \to \mathbb{H}^i(\cY; \mathcal{F}^\bullet) \to \mathbb{H}^i(\cY; \mathcal{F}^\bullet \otimes_AA/I) \to \cdots $$
Then the above implies that $\mathbb{H}^i(\cY; \mathcal{F}^\bullet) \to \mathbb{H}^i(\cY; \mathcal{F}^\bullet \otimes_AA/I)$ is surjective for all $i$. Hence the long exact sequence breaks up into short exact sequences and thus
\begin{align*}
\ell(\mathbb{H}^i(\cY; \mathcal{F}^\bullet)) &= \ell(\mathbb{H}^i(\cY; \mathcal{F}^\bullet\otimes_AI ))+ \ell(\mathbb{H}^i(\cY; \mathcal{F}^\bullet\otimes_AA/I))\\
&=  \dim \mathbb{H}^i(Y; \mathcal{F}^\bullet|Y) + \ell(A/I)\dim \mathbb{H}^i(Y; \mathcal{F}^\bullet|Y)\\
&= \ell(A)\dim \mathbb{H}^i(Y; \mathcal{F}^\bullet|Y).
\end{align*}
This concludes the inductive step.
\end{proof}

Now assume that  $f\colon \cY \to \Spec A$ is a proper morphism, where $\cY$ is a scheme of finite type over $\Spec \Cee$ and $A$ is an Artin local $\Cee$-algebra, with closed fiber $Y$. Consider the complex $\Omega^\bullet_{\cY/\Spec A}$ and the quotient complex $\Omega^\bullet_{\cY/\Spec A}/\sigma^{\geq k+1}\Omega^\bullet_{\cY/\Spec A}$. For the closed fiber $Y$, we also have the complex $\Omega^\bullet_Y/\sigma^{\geq k+1}\Omega^\bullet_Y$. 

\begin{lemma}\label{lemma2} With notation as above, suppose that $Y$ is $k$-Du Bois. Then:
\begin{enumerate}
\item[\rm(i)] For every $i$, the natural map 
$$\mathbb{H}^i(\cY; \Omega^\bullet_{\cY/\Spec A}/\sigma^{\geq k+1}\Omega^\bullet_{\cY/\Spec A}) \to \mathbb{H}^i(Y; \Omega^\bullet_Y/\sigma^{\geq k+1}\Omega^\bullet_Y)$$
is surjective.
\item[\rm(ii)] The spectral sequence with $E_1$ term
$$E_1^{p,q} = \begin{cases} H^q(Y; \Omega^p_Y) , &\text{for $p\leq k$;}\\
0, &\text{for $p> k$.}
\end{cases}$$
converging to $\mathbb{H}^{p+q}(Y; \Omega^\bullet_Y/\sigma^{\geq k+1}\Omega^\bullet_Y)$ degenerates at $E_1$. Hence, for every $i$,
$$\dim \mathbb{H}^i(Y; \Omega^\bullet_Y/\sigma^{\geq k+1}\Omega^\bullet_Y) = \sum_{\substack {p+q = i\\ p \leq k}}\dim H^q(Y; \Omega^p_Y).$$
\end{enumerate}
\end{lemma}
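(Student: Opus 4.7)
The hypothesis that $Y$ is $k$-Du Bois says the comparison map $\phi^p \colon \Omega^p_Y \to \uOp_Y$ is a quasi-isomorphism for $0\le p\le k$. Hence the induced map of truncated complexes
$$\Omega^\bullet_Y / \sigma^{\ge k+1}\Omega^\bullet_Y \longrightarrow \uOb_Y / F^{k+1}\uOb_Y$$
is a filtered quasi-isomorphism: on the $p$-th graded piece it is $\phi^p$ for $p\le k$, and both sides vanish for $p > k$. The two naive spectral sequences therefore agree from the $E_1$ page onward, so Corollary~\ref{DBdegen}(ii) transfers $E_1$-degeneration from the target to the source. The dimension formula is obtained by summing $\dim E_1^{p,q}$ on the $E_\infty = E_1$ page.

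\textbf{Plan for (i).} I would factor the surjection of Corollary~\ref{DBdegen}(i) through the relative hypercohomology. Because $\Spec A$ is topologically a point, $\cY$ and $Y$ share the same underlying topological space, giving a canonical identification $H^i(\cY;\Cee) = H^i(Y;\Cee)$. The constant-sheaf inclusion $\Cee \hookrightarrow \scrO_\cY$, sitting in degree zero and annihilated by the $A$-linear differential, defines a morphism of complexes $\underline{\Cee}_\cY \to \Omega^\bullet_{\cY/\Spec A}/\sigma^{\ge k+1}\Omega^\bullet_{\cY/\Spec A}$, and reducing modulo $\mathfrak{m}$ recovers the analogous morphism for $Y$. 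The resulting commutative diagram yields a composition
$$H^i(Y;\Cee) \longrightarrow \mathbb{H}^i\bigl(\cY;\, \Omega^\bullet_{\cY/\Spec A}/\sigma^{\ge k+1}\Omega^\bullet_{\cY/\Spec A}\bigr) \longrightarrow \mathbb{H}^i\bigl(Y;\, \Omega^\bullet_Y/\sigma^{\ge k+1}\Omega^\bullet_Y\bigr).$$
Under the filtered quasi-isomorphism of the previous paragraph, this composition is identified with the natural map $H^i(Y;\Cee) \to \mathbb{H}^i(Y; \uOb_Y/F^{k+1}\uOb_Y)$, which is surjective by Corollary~\ref{DBdegen}(i). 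Hence the second arrow above is surjective, which is exactly (i).

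\textbf{Where the difficulty lies.} There is no serious obstacle: the proof is essentially a diagram chase powered by the filtered quasi-isomorphism $\Omega^\bullet_Y/\sigma^{\ge k+1}\Omega^\bullet_Y \simeq \uOb_Y/F^{k+1}\uOb_Y$ coming from the $k$-Du Bois hypothesis. The only delicate inputs are the topological identification $H^i(\cY;\Cee) = H^i(Y;\Cee)$ — valid because the nilpotents of $A$ do not affect the underlying space — and the routine compatibility of the constant-sheaf morphism with reduction modulo $\mathfrak{m}$. Note that this lemma does not yet invoke the flatness Corollary~\ref{cor1}; flatness (and hence the lci assumption) only enters when Lemma~\ref{lemma2} is combined with Theorem~\ref{thm5} to deduce local freeness of the higher direct images in Theorem~\ref{mainThm}.
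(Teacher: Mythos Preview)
Your proposal is correct and follows essentially the same approach as the paper: for (ii) you use the filtered quasi-isomorphism $\Omega^\bullet_Y/\sigma^{\ge k+1}\Omega^\bullet_Y \to \uOb_Y/F^{k+1}\uOb_Y$ supplied by the $k$-Du Bois hypothesis to import $E_1$-degeneration from Corollary~\ref{DBdegen}(ii), and for (i) you run the same diagram chase as the paper (the paper inserts the intermediate column $\mathbb{H}^i(\cY;\Omega^\bullet_{\cY/\Spec A})\to\mathbb{H}^i(Y;\Omega^\bullet_Y)$, but this is immaterial), using $H^i(\cY;\Cee)=H^i(Y;\Cee)$ and the surjectivity from Corollary~\ref{DBdegen}(i). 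Your remark that flatness is not yet invoked here is also accurate.
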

\begin{proof} By the $k$-Du Bois condition,  the natural map $\Omega^\bullet_Y/\sigma^{\geq k+1}\Omega^\bullet_Y \to \underline{\Omega}^\bullet_Y/F^{k+1}\underline{\Omega}^\bullet_Y$ is a quasi-isomorphism of filtered complexes.  Thus there are isomorphisms 
\begin{align*} \mathbb{H}^i(Y;\Omega^\bullet_Y/\sigma^{\geq k+1}\Omega^\bullet_Y) &\cong \mathbb{H}^i(Y; \underline{\Omega}^\bullet_Y/F^{k+1}\underline{\Omega}^\bullet_Y)  \\
H^q(Y;\Omega^p_Y) &\cong \mathbb{H}^q(Y; \underline{\Omega}^p_Y) \qquad \qquad (p\leq k).
\end{align*}
 By Corollary~\ref{DBdegen}(ii), the spectral sequence with $E_1$ page $\mathbb{H}^q(Y; \underline{\Omega}^p_Y)$ for $p\leq k$ and $0$ otherwise degenerates at $E_1$. Hence the same is true for the spectral sequence in (ii).

To prove (i), arguing as in \cite{DBJ}, there is a commutative diagram
$$\begin{CD}
H^i(\cY; \Cee) @>>> \mathbb{H}^i(\cY; \Omega^\bullet_{\cY/\Spec A}) @>>> \mathbb{H}^i(\cY; \Omega^\bullet_{\cY/\Spec A}/\sigma^{\geq k+1}\Omega^\bullet_{\cY/\Spec A}) \\
@| @VVV @VVV \\
H^i(Y; \Cee) @>>> \mathbb{H}^i(Y; \Omega^\bullet_Y) @>>> \mathbb{H}^i(Y; \Omega^\bullet_Y/\sigma^{\geq k+1}\Omega^\bullet_Y).
\end{CD}$$
 By Corollary~\ref{DBdegen}(i), $H^i(Y; \Cee) \to \mathbb{H}^i(Y; \Omega^\bullet_Y/\sigma^{\geq k+1}\Omega^\bullet_Y)\cong \mathbb{H}^i(Y; \underline{\Omega}^\bullet_Y/F^{k+1}\underline{\Omega}^\bullet_Y)$ is surjective. Hence 
$$\mathbb{H}^i(\cY; \Omega^\bullet_{\cY/\Spec A}/\sigma^{\geq k+1}\Omega^\bullet_{\cY/\Spec A}) \to \mathbb{H}^i(Y; \Omega^\bullet_Y/\sigma^{\geq k+1}\Omega^\bullet_Y)$$ is surjective as well.   
\end{proof}

 \begin{proof}[Proof of Theorem~\ref{mainThm}] By Corollary~\ref{cor1}, possibly after shrinking $S$, $\Omega^p_{\cY/S}$ is flat over $S$. By a standard reduction to the case of Artin local algebras, it is enough to show the following: Let $A$ be an Artin local $\Cee$-algebra and $f\colon \cY\to \Spec A$ a proper morphism whose closed fiber $Y$ is isomorphic to $Y_s$. Then $R^qf_*\Omega^p_{\cY/\Spec A}= H^q(\cY; \Omega^p_{\cY/\Spec A})$ is a free $A$-module, compatible with base change. In fact, by \cite[Theorem 12.11]{Hartshorne},   it is enough to show that, for every $q$ and quotient $A \to A/I=\overline{A}$, the natural map  $H^q(\cY; \Omega^p_{\cY/\Spec A}) \to H^q(\overline{\cY}; \Omega^p_{\overline{\cY}/\Spec \overline{A}})$ is surjective, where $\overline{\cY} = \cY\times_{\Spec A}\Spec \overline{A}$. 

By Theorem~\ref{thm5} and Lemma~\ref{lemma2}, for every $i$,
\begin{align*} 
\ell(\mathbb{H}^i(\cY; \Omega^\bullet_{\cY/\Spec A}/\sigma^{\geq k+1}\Omega^\bullet_{\cY/\Spec A})) &= \ell(A) \dim \mathbb{H}^i(Y; \Omega^\bullet_Y/\sigma^{\geq k+1}\Omega^\bullet_Y)\\
&= \ell(A)\sum_{\substack {p+q = i\\ p \leq k}}\dim H^q(Y; \Omega^p_Y).
\end{align*}
On the other hand, by analogy with Lemma~\ref{lemma2}(ii), there is the spectral sequence converging to $\mathbb{H}^i(\cY; \Omega^\bullet_{\cY/\Spec A}/\sigma^{\geq k+1}\Omega^\bullet_{\cY/\Spec A})$ with $E_1$ page 
$$E_1^{p,q} = \begin{cases} H^q(\cY; \Omega^p_{\cY/\Spec A}) , &\text{for $p\leq k$;}\\
0, &\text{for $p> k$.}
\end{cases}$$
Thus $\Dis\ell(\mathbb{H}^i(\cY; \Omega^\bullet_{\cY/\Spec A}/\sigma^{\geq k+1}\Omega^\bullet_{\cY/\Spec A})) \leq \sum_{\substack {p+q = i\\ p \leq k}}\ell(H^q(\cY; \Omega^p_{\cY/\Spec A}))$, with equality $\iff$ the spectral sequence degenerates at $E_1$. 
Moreover, a straightforward induction on $\ell(A)$ along the lines of the proof of Theorem~\ref{thm5} shows that $\ell(H^q(\cY; \Omega^p_{\cY/\Spec A})) \leq \ell(A) \dim H^q(Y; \Omega^p_Y)$, with equality holding for all $q$ $\iff$ for every $q$ and every quotient $A \to A/I=\overline{A}$, the natural map  $H^q(\cY; \Omega^p_{\cY/\Spec A}) \to H^q(\overline{\cY}; \Omega^p_{\overline{\cY}/\Spec \overline{A}})$ is surjective.  Combining, we have
\begin{align*} 
\ell(\mathbb{H}^i(\cY;& \Omega^\bullet_{\cY/\Spec A}/\sigma^{\geq k+1} \Omega^\bullet_{\cY/\Spec A}))   \leq   \sum_{\substack {p+q = i\\ p \leq k}}\ell(H^q(\cY; \Omega^p_{\cY/\Spec A})) \\
 &\leq \sum_{\substack {p+q = i\\ p \leq k}}\ell(A) \dim H^q(Y; \Omega^p_Y)   
 = \ell(A)\dim \mathbb{H}^i(Y; \Omega^\bullet_Y/\sigma^{\geq k+1}\Omega^\bullet_Y)  \\
 &= \ell(\mathbb{H}^i(\cY; \Omega^\bullet_{\cY/\Spec A}/\sigma^{\geq k+1}\Omega^\bullet_{\cY/\Spec A}).
\end{align*}
Thus all of the inequalities must have been equalities, and in particular $H^q(\cY; \Omega^p_{\cY/\Spec A}) \to H^q(\overline{\cY}; \Omega^p_{\overline{\cY}/\Spec \overline{A}})$ is surjective for every $q$ and for every quotient $A \to \overline{A}$. This completes the proof, by the remarks in the first paragraph of the proof. 
\end{proof}

\begin{remark}\label{rem-can} Instead of assuming that $Y$ is a scheme of finite type proper over $\Spec \Cee$, it is  enough to assume that $Y$ is a compact analytic space with isolated singularities and that there exists  a resolution  of singularities of $Y$ which satisfies  the $\partial\bar\partial$-lemma. Roughly speaking, the argument is as follows: The main point is to construct the analogue of the complex $\uOb_Y$ in this case, using Hironaka's resolution of singularities, and to note that all of the smooth varieties arising from a cubical hyperrresolution satisfy the $\partial\bar\partial$-lemma. Such varieties arise in two different ways: first, there is one such which is a resolution of singularities of $Y$. A standard argument (cf.\ \cite[proof of Theorem 2.29]{PS}) shows that in fact every resolution  of singularities of $Y$ satisfies the $\partial\bar\partial$-lemma. The remaining varieties in the construction can all be assumed to be smooth projective varieties coming from proper transforms of blowups of subvarieties of the exceptional divisors of the blowup of $Y$ along $Y_{\text{\rm{sing}}}$ or by iterating this procedure. In particular,  for every such variety, its cohomology carries a pure Hodge structure and the restriction morphisms between the various  cohomology groups are morphisms of Hodge structures.  From this, it follows that the Hodge spectral sequence $E_1^{p,q} =\mathbb{H}^q(Y; \uOp_Y) \implies H^{p+q}(Y;\Cee)$ degenerates at $E_1$. Then the argument of Theorem~\ref{mainThm} applies. 

It seems likely that there is a more general result, in the non-isolated case, assuming sufficiently strong conditions on  every subvariety of $Y_{\text{\rm{sing}}}$.    
\end{remark}

We turn now to a proof of Corollary~\ref{CYdef}. First, we define canonical Calabi-Yau varieties following \cite[Definition 6.1]{FL}:

\begin{definition}\label{defcanonCY} A \textsl{canonical Calabi-Yau variety} $Y$ is a scheme proper over $\Spec \Cee$ (resp.\ a compact analytic space) which is reduced,   Gorenstein, with canonical singularities, and  such that $\omega_Y \cong \scrO_Y$.
\end{definition}

\begin{proof}[Proof of Corollary~\ref{CYdef}] It suffices by \cite{Kawamata} to show that $Y$ has the $T^1$-lifting property. In fact, we show the following somewhat stronger statement: let $A$ be an  Artin local $\Cee$-algebra, $\pi\colon \mathcal{Y} \to \Spec A$ a deformation of $Y$ over $A$, and  $I$ an ideal of $A$ with $\overline{A} =A/I$ with $\overline{\cY} = \cY\times _{\Spec A}\Spec \overline{A}$, 
the natural map 
$$\Ext^1(\Omega^1_{\cY/\Spec A} , \scrO_{\cY}) \to \Ext^1(\Omega^1_{\overline{\cY}/\Spec \overline{A}} , \scrO_{\overline{\cY}})$$
is surjective.   Arguing as in \cite[Lemma 3]{Kawamata}, we claim that $\omega_{\cY/\Spec A} \cong \scrO_{\cY}$. To see this, note that, as $Y$ is $1$-Du Bois it is $0$-Du Bois, and hence by Theorem~\ref{Thm-duBois} $R^i\pi_*\scrO_{\cY}=H^i(\cY; \scrO_{\cY})$ is a free $A$-module for all $i$, compatible with base change. In particular, $R^n\pi_*\scrO_{\cY}$ is a free rank one $A$-module. An application of relative duality then shows that $(R^i\pi_*\scrO_{\cY})\spcheck \cong R^{n-i}\omega_{\cY/\Spec A}$, and hence $R^{n-i}\omega_{\cY/\Spec A}$ is a free $A$-module for all $i$. Thus $R^0\pi_*\omega_{\cY/\Spec A}$ is a free rank one $A$-module and the natural map  $R^0\pi_*\omega_{\cY/\Spec A} \otimes_A \scrO_{\cY} \to \omega_{\cY/\Spec A}$ is an isomorphism. Hence $\omega_{\cY/\Spec A} \cong \scrO_{\cY}$.

By a similar application of relative duality, and since $R^i\pi_*\Omega^1_{\cY/\Spec A} = H^i(\cY; \Omega^1_{\cY/\Spec A})$ is a free $A$-module for all $i$, there is an isomorphism of $A$-modules
\begin{align*} \Ext^1(\Omega^1_{\cY/\Spec A} , \scrO_{\cY}) &\cong \textit{Ext}_{\pi}^1(\Omega^1_{\cY/\Spec A} , \omega_{\cY/\Spec A}) \\
&\cong \Hom_A(H^{n-1}(\cY;\Omega^1_{\cY/\Spec A} ), A) .
\end{align*}
Then the $T^1$-lifting criterion follows from the fact that $H^{n-1}(\cY;\Omega^1_{\cY/\Spec A})$ is a free $A$-module and that the natural map $H^{n-1}(\cY;\Omega^1_{\cY/\Spec A}) \otimes _A\overline{A} \to H^{n-1}(\overline{\cY};\Omega^1_{\overline{\cY}/\Spec \overline{A}})$ is an isomorphism. 
\end{proof} 

\begin{remark} A similar argument using Remark~\ref{rem-can} shows that, if $Y$ is a compact analytic space with isolated lci $1$-Du Bois singularities such that $\omega_Y \cong \scrO_Y$ and there exists a resolution of singularities of $Y$ satisfying the $\partial\bar\partial$-lemma, then $\mathbf{Def} (Y)$ is unobstructed.
\end{remark} 

\section{Proof of Theorem~\ref{kratkDB}}\label{section5}

For $X$ an algebraic variety, let $\Sigma_k= \Sigma_k(X)$ be the set of points where $X$ is not $k$-Du Bois. Thus, for all $k$, $\Sigma_k \subseteq \Sigma_{k+1} \subseteq \Sigma$. It is easy to see that $\Sigma_k$ is a closed subvariety of $X$. In fact,  completing the morphism $\phi^p\colon \Omega^p_X \to \uOp_X $ to a distinguished triangle
$$ \Omega^p_X \to \uOp_X \to \mathcal{G}^p   \xrightarrow{+1},$$
by definition we have
$$\Sigma_k = \bigcup_{\substack{ i, p\\ 0\le p \le k}}\operatorname{Supp}\mathcal{H}^i\mathcal{G}^p.$$

In order to prove that $k$-rational singularities are $k$-Du Bois, we will need to consider situations more general than $k$-rational (as in the statement of Theorem~\ref{mainkratthm}). Recall that a \textsl{left inverse} to the map $\phi^p\colon \Omega^p_X \to \uOp_X $ is a map $h^p\colon \uOp_X \to \Omega^p_X$ such that $h^p\circ \phi^p =\Id$. Of course, left inverses need not exist in general.  More generally, we consider   a  set of $k+1$  left inverses $\{h^p\}_{p=0}^k$. If $X$ is $k$-Du Bois, then $\phi^p$ is an isomorphism for $p\le k$ and so $\{h^p\}_{p=0}^k =\{(\phi^p)^{-1}\}_{p=0}^k$ is a set of left inverses. In this case, we shall always use $\phi^p$ to identify $\Omega^p_X$ with  $\uOp_X $, and thus $h^p =\Id$ for $p \le k$.

If there exists a left inverse $H_k \colon\uOb_X /F^{k+1} \to \Omega^\bullet_X/\sigma^{\geq k+1} $ in the filtered derived category, then $\{\Gr^p H_k\}_{p=0}^k$ defines a set of $k+1$ left inverses. The following lemma shows that, conversely, given set of $k+1$  left inverses  $\{h^p\}_{p=0}^k$, we can modify them so as to arrange a left inverse $H_k \colon\uOb_X /F^{k+1} \to \Omega^\bullet_X/\sigma^{\geq k+1} $ in the filtered derived category.

\begin{lemma}\label{lemma5.2}    If the map $\Omega^\bullet_X/\sigma^{\geq k+1} \to \uOb_X /F^{k+1}$ has a set of $k+1$ left inverses $\{h^p\}_{p=0}^k$, then there exists a left inverse $H_k \colon\uOb_X /F^{k+1} \to \Omega^\bullet_X/\sigma^{\geq k+1} $ in the filtered derived category.
\end{lemma}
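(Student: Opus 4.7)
The plan is to induct on $k$, using the distinguished triangles arising from the bottom step of each filtration to assemble $H_k$ out of the inductively constructed $H_{k-1}$ together with the given $h^k$.

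For the base case $k=0$, the filtered complex $\uOb_X/F^1$ reduces to its single graded piece $\underline{\Omega}^0_X$, and likewise $\Omega^\bullet_X/\sigma^{\geq 1} \cong \Omega^0_X$, so $H_0:=h^0$ is the required filtered left inverse. For the inductive step, assume we have produced a filtered left inverse $H_{k-1}\colon \uOb_X/F^k \to \Omega^\bullet_X/\sigma^{\geq k}$. The two filtered objects fit into distinguished triangles
\begin{equation*}
\underline{\Omega}^k_X[-k] \to \uOb_X/F^{k+1} \to \uOb_X/F^k \xrightarrow{\delta} \underline{\Omega}^k_X[-k+1],
\end{equation*}
\begin{equation*}
\Omega^k_X[-k] \to \Omega^\bullet_X/\sigma^{\geq k+1} \to \Omega^\bullet_X/\sigma^{\geq k} \xrightarrow{\delta'} \Omega^k_X[-k+1],
\end{equation*}
and the natural map $\phi$ between them is compatible with both triangles (restricting to $\phi^k[-k]$ on the left and to $\phi_k\colon \Omega^\bullet_X/\sigma^{\geq k}\to \uOb_X/F^k$ on the right). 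I would attempt to fill in a middle vertical $H_k$ using $h^k[-k]$ on the left and $H_{k-1}$ on the right. By axiom TR3, such an $H_k$ exists as soon as the right-hand square commutes, i.e.\ the element $\epsilon := h^k[-k+1]\circ \delta - \delta'\circ H_{k-1} \in \Hom(\uOb_X/F^k, \Omega^k_X[-k+1])$ vanishes.

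The main obstacle is verifying this compatibility. The strategy is to show first that $\epsilon\circ\phi_k = 0$: indeed, by naturality of the connecting maps in the morphism of triangles $\Omega^\bullet\to \uOb$ one has $\delta\circ\phi_k = \phi^k[-k+1]\circ\delta'$, and since $h^k\circ\phi^k=\Id$ one obtains $h^k[-k+1]\circ\delta\circ\phi_k = \delta' = \delta'\circ H_{k-1}\circ\phi_k$ using the left-inverse property of $H_{k-1}$. Hence $\epsilon$ factors through the cone $C_k$ of $\phi_k$, and one can replace $H_{k-1}$ by $H_{k-1}+\eta$ for a correction $\eta$ that itself factors through $C_k$ (so that the left-inverse property is preserved) chosen to cancel $\epsilon$ modulo $\delta'$. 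The existence of this correction is the delicate step and would be obtained from the long exact $\Hom$-sequence associated to the cone triangle for the right-hand triangle.

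Once the compatibility is arranged, TR3 supplies a morphism $H_k$ of distinguished triangles. It remains to check that $H_k$ is a left inverse to the natural map $\phi\colon \Omega^\bullet_X/\sigma^{\geq k+1}\to \uOb_X/F^{k+1}$. The composition $H_k\circ\phi$ is an endomorphism in the filtered derived category whose graded pieces are $h^p\circ\phi^p = \Id$ for all $p\leq k$; by the 5-lemma in the filtered triangulated setting it is therefore an isomorphism. Replacing $H_k$ by $(H_k\circ\phi)^{-1}\circ H_k$ yields the desired filtered left inverse and completes the induction.
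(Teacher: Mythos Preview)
Your strategy coincides with the paper's: induct on $k$, use the distinguished triangles coming from the bottom filtration step, produce a middle vertical arrow, and then correct by composing with the inverse of the resulting self-map. Your last paragraph is exactly the paper's argument: the paper calls the fill-in $H_k'$, sets $G_k := H_k'\circ\phi$, notes that $G_k$ is an isomorphism because it is one on graded pieces, and defines $H_k := G_k^{-1}\circ H_k'$.

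Where you diverge is the middle two paragraphs, in which you try to justify that the TR3 fill-in exists by arranging $\epsilon = 0$. The paper does not attempt this; it simply writes ``we can thus complete the diagram to find a morphism $H_k'$\ldots which yields a morphism of distinguished triangles'' and proceeds. Your attempted justification has a genuine gap: after showing $\epsilon\circ\phi_k = 0$, you want a correction $\eta$ factoring through the cone $C_k$ of $\phi_k$ with $\delta'\circ\eta = \epsilon$. For such an $\eta$ to exist, the induced map $C_k\to\Omega^k_X[-k+1]$ must lift along $\delta'$, equivalently its image in $\Hom(C_k,\Omega^\bullet_X/\sigma^{\geq k+1}[1])$ must vanish. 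You invoke ``the long exact $\Hom$-sequence'' but give no reason this obstruction dies, and there is no evident one. So the ``delicate step'' remains unresolved. In effect you have identified a point the paper passes over in silence and then not managed to settle it; apart from that extra and inconclusive detour, your argument and the paper's are the same.
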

\begin{remark} We do not claim that the left inverse $H_k$ constructed in the proof satisfies $h^p = \Gr^p H_k$ for all $p \le k$.
\end{remark} 
\begin{proof}[Proof of Lemma~\ref{lemma5.2}]  We argue by induction on $k$, where the case $k=0$ is obvious. There is a diagram
$$\begin{CD}
\underline{\Omega}_X^k[-k]  @>>> \uOb_X/F^{k+1} @>>> \uOb_X/F^k  @>{+1}>> \\
@VV{h^k}V   @.  @VV{H_{k-1}}V  \\
\Omega^k_X[-k]   @>>> \Omega^\bullet_X/\sigma^{\geq k+1}  @>>> \Omega^\bullet_X/\sigma^{\geq k}  @>{+1}>>.
 \end{CD}$$ 
 We can thus complete the diagram to find  a morphism $H_k'\colon \uOb_X/F^{k+1} \to \Omega^\bullet_X/\sigma^{\geq k+1}$ in the filtered derived category which yields a morphism of distinguished triangles. The composition $G_k\colon \Omega^\bullet_X/\sigma^{\geq k+1} \to \uOb_X/F^{k+1} \xrightarrow{H_k'} \Omega^\bullet_X/\sigma^{\geq k+1}$ is an  isomorphism since it is an isomorphism on the  graded pieces. Set     $H_k = G_k^{-1}\circ H_k'\colon \uOb_X/F^{k+1} \to \Omega^\bullet_X/\sigma^{\geq k+1}$. Then   $H_k$   is a left inverse to the map $\Omega^\bullet_X/\sigma^{\geq k+1} \to \uOb_X/F^{k+1}$. 
\end{proof}

Next we define a special class of left inverses: 
\begin{definition} 
Let $\alpha \in H^0(X; \Omega^1_X)$. Then  $\alpha$ pulls back to some fixed hyperresolution and so defines a map $\alpha\wedge \colon \underline{\Omega}^{p-1}_X \to \uOp_X$. Two left inverses $h^p$ and $h^{p-1}$ are \textsl{compatible} if, for all $p\le k$ and all   $\alpha \in H^0(X; \Omega^1_X)$, 
$$h^p\circ (\alpha\wedge) = (\alpha\wedge)\circ h^{p-1}.$$ 
If $X$ is $k$-Du Bois, then, for all $p\le k$,  the left inverses $h^p$ and $h^{p-1}$ are isomorphisms. In fact, after identifying $\uOp_X$ and $\underline{\Omega}^{p-1}_X$ with $\Omega^p_X$ and $\Omega^{p-1}_X$ respectively, we can assume that $h^p =\Id$ for all $p \le k$. With this convention,  necessarily $h^p\circ (\alpha\wedge) = (\alpha\wedge)\circ h^{p-1}$, hence $h^p$ and $h^{p-1}$ are compatible.

Finally, the set of $k+1$  left inverses  $\{h^p\}_{p=0}^k$ is  \textsl{compatible} if, for all $p\le k$, $h^p$ and $h^{p-1}$ are compatible.
\end{definition}

\begin{lemma}\label{existsleftinv} If $X$ is $k$-rational, then there exists a compatible  set of $k+1$  left inverses $\{h^p\}_{p=0}^k$.
\end{lemma}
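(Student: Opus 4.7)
The plan is to construct the left inverses directly from the $k$-rationality hypothesis. For each $0\le p\le k$ the composite $\psi^p\circ\phi^p\colon \Omega^p_X\to \bD_X(\uOnp_X)$ is a quasi-isomorphism by Definition~\ref{defkrat}, hence invertible in the derived category. Set
\[
h^p := (\psi^p\circ\phi^p)^{-1}\circ \psi^p\colon \uOp_X \longrightarrow \Omega^p_X.
\]
By construction $h^p\circ\phi^p = \Id_{\Omega^p_X}$, so $\{h^p\}_{p=0}^k$ is a set of left inverses of the comparison maps.

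The real content of the lemma is the compatibility of the $h^p$ with wedge products. The key technical ingredient is an intertwining relation for $\psi^p$: for every $\alpha\in H^0(X;\Omega^1_X)$,
\[
\psi^p\circ(\alpha\wedge) = \bD_X(\alpha\wedge)\circ\psi^{p-1},
\]
where on the right the map $\alpha\wedge\colon \uOnp_X\to \underline\Omega^{n-p+1}_X$ has been contravariantly dualized into a map $\bD_X(\underline\Omega^{n-p+1}_X)\to \bD_X(\uOnp_X)$. One establishes this by tracing the action of $\alpha\wedge$ through the three-step factorization of $\psi^p$ used in the proof of Lemma~\ref{lem-psi}: the functorial maps $\uOp_X\to R\pi_*\Omega^p_{\hX}$ and $\bD_X(R\pi_*\Omega^{n-p}_{\hX})\to\bD_X(\uOnp_X)$ each commute with wedge by $\alpha$ because they are morphisms of (respectively, dual) $\Omega^\bullet_X$-modules, while the middle isomorphism $R\pi_*\Omega^p_{\hX}\cong\bD_X(R\pi_*\Omega^{n-p}_{\hX})$ produced by Grothendieck duality is adjoint to the pairing $\Omega^p_{\hX}\otimes\Omega^{n-p}_{\hX}\to\omega_{\hX}$ and therefore, by associativity of the wedge product on $\hX$, converts wedge by $\pi^*\alpha$ on the source into its dual on the target.

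Granting the intertwining identity and the obvious naturality $\phi^p\circ(\alpha\wedge)=(\alpha\wedge)\circ\phi^{p-1}$, the required compatibility follows by a direct manipulation in the derived category. One computes
\[
(\psi^p\circ\phi^p)\circ(\alpha\wedge) = \psi^p\circ(\alpha\wedge)\circ\phi^{p-1} = \bD_X(\alpha\wedge)\circ(\psi^{p-1}\circ\phi^{p-1}),
\]
so that $(\alpha\wedge) = (\psi^p\circ\phi^p)^{-1}\circ\bD_X(\alpha\wedge)\circ(\psi^{p-1}\circ\phi^{p-1})$. Substituting into $(\alpha\wedge)\circ h^{p-1}$, collapsing $(\psi^{p-1}\circ\phi^{p-1})\circ(\psi^{p-1}\circ\phi^{p-1})^{-1}=\Id$, and reapplying the intertwining identity gives
\[
(\alpha\wedge)\circ h^{p-1} = (\psi^p\circ\phi^p)^{-1}\circ\bD_X(\alpha\wedge)\circ\psi^{p-1} = (\psi^p\circ\phi^p)^{-1}\circ\psi^p\circ(\alpha\wedge) = h^p\circ(\alpha\wedge),
\]
which is exactly the compatibility of $h^p$ with $h^{p-1}$.

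The principal technical obstacle is the intertwining identity for $\psi^p$. Although it is intuitively transparent---wedge by $\alpha$ is a structural feature of the de Rham complex preserved by every canonical construction---a clean derivation in the filtered derived category requires either working with an explicit hyperresolution model on which all of the relevant maps are realized by genuine chain maps, or invoking the compatibility of Grothendieck duality for the proper morphism $\pi\colon\hX\to X$ with the DG-module structure on $R\pi_*\Omega^\bullet_{\hX}$ over $\Omega^\bullet_X$; one of these routes will have to be taken. Any sign ambiguity of the form $(-1)^{p-1}$ picked up in the intertwining identity is harmless, since it cancels between the two applications of the identity in the final compatibility equation.
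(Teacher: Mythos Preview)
Your proof is correct and follows essentially the same route as the paper: both define $h^p=(\psi^p\circ\phi^p)^{-1}\circ\psi^p$ and derive compatibility from the fact that the chain $\Omega^p_X\to\uOp_X\to R\pi_*\Omega^p_{\hX}\to\bD_X(\uOnp_X)$ commutes (up to sign) with the vertical maps $\alpha\wedge$. The paper records this as a single commutative diagram and then inverts $\gamma^p:=\psi^p\circ\phi^p$ in one step, whereas you spell out the intertwining identity for $\psi^p$ and the resulting algebraic manipulation more carefully; your added caution about justifying the intertwining in the derived category and about signs is reasonable, but the paper is content to assert the diagram without further comment.
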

\begin{proof}
The following diagram commutes up to a sign (all vertical maps are $\alpha\wedge$):
$$\begin{CD}
\Omega^{p-1}_X @>>> \underline{\Omega}^{p-1}_X @>>> R\pi_*\Omega^{p-1}_{\hX} @>>> \mathbb{D}_X(\underline{\Omega}^{n-p+1}_X) \\
@VVV @VVV @VVV @VVV\\
\Omega^p_X @>>> \uOp_X @>>> R\pi_*\Omega^p_{\hX} @>>> \mathbb{D}_X(\underline{\Omega}^{n-p}_X)
\end{CD}$$
For $p\le k$, let $h^p$ be the left inverse to the map $\Omega^p_X \to \uOp_X$ defined by taking the inverse to the isomorphism $\gamma^p =\psi^p\circ \phi^p\colon \Omega^p_X \to \mathbb{D}_X(\underline{\Omega}^{n-p}_X)$ and composing  it with the map $\uOp_X\to \mathbb{D}_X(\underline{\Omega}^{n-p}_X)$. Since $\gamma^p\circ (\alpha\wedge) = (\alpha\wedge)\circ \gamma^{p-1}$, $(\gamma^p)^{-1}\circ (\alpha\wedge) = (\alpha\wedge)\circ (\gamma^{p-1})^{-1}$ and thus $h^p\circ (\alpha\wedge) = (\alpha\wedge)\circ h^{p-1}$. 
\end{proof}

We now deal with the case where $\dim \Sigma_k =0$, following the method of proof of Kov\'acs \cite[Theorem 2.3]{Kovacs99}, \cite[p.\ 186]{PS}:

\begin{proposition}\label{dim=0case} Suppose that $\dim \Sigma_k =0$ and that there is a left inverse $H_k\colon \uOb_X /F^{k+1} \to \Omega^\bullet_X/\sigma^{\geq k+1}$ to the map $\Omega^\bullet_X/\sigma^{\geq k+1} \to \uOb_X /F^{k+1}$. Then   $\Omega^\bullet_X/\sigma^{\geq k+1} \cong \uOb_X /F^{k+1}$.
\end{proposition}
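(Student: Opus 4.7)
The first step is to convert the hypothesis on $H_k$ into a direct-sum decomposition. Let $\iota\colon \Omega^\bullet_X/\sigma^{\geq k+1}\to \uOb_X/F^{k+1}$ be the natural map. Because $H_k\circ \iota = \Id$ in the filtered derived category, the endomorphism $e = \iota\circ H_k$ of $\uOb_X/F^{k+1}$ is an idempotent. The bounded filtered derived category of coherent sheaves on $X$ is Karoubian (idempotents split in $D^b_{\mathrm{coh}}$ of a Noetherian scheme, and a finite filtration preserves this property), so $e$ splits to give
$$\uOb_X/F^{k+1}\;\cong\;\Omega^\bullet_X/\sigma^{\geq k+1}\oplus K$$
in $D^b_{\mathrm{filt}}(X)$, where $K$ is the mapping cone of $\iota$. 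The conclusion of the proposition is equivalent to the vanishing $K\simeq 0$.

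Next, I would locate where the obstruction can possibly live. By the very definition of $\Sigma_k$, the map $\iota$ is a filtered quasi-isomorphism on $X\setminus\Sigma_k$, hence $K|_{X\setminus\Sigma_k}\simeq 0$. Since $\dim\Sigma_k=0$ by hypothesis, each cohomology sheaf $\mathcal{H}^j(\Gr^p K)$ is a skyscraper supported on the finite set $\Sigma_k$, and in particular $K$ is quasi-isomorphic to a bounded complex of finite-dimensional skyscrapers at the points of $\Sigma_k$. The problem reduces to the purely local statement that the stalk $K_x \simeq 0$ for every $x\in \Sigma_k$, which we may analyze one point at a time.

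The main obstacle is this final local vanishing, which does not follow formally from the splitting. My intended route would adapt Kov\'acs's duality argument from \cite[Thm.~2.3]{Kovacs99}: applying $\mathbb D_X$ to the splitting produces a dual decomposition of $\mathbb D_X(\uOb_X/F^{k+1})$, and the natural transformation $\psi^\bullet$ of Lemma~\ref{lem-psi} identifies the two sides in a way that forces $\mathbb D_X K$ to also be a complex of skyscrapers on $\Sigma_k$. Comparing dimensions of stalks of $K$ and $\mathbb D_X K$, and using that a nonzero skyscraper summand of $\uOb_X/F^{k+1}$ at $x$ would have to correspond to a nonzero skyscraper summand of $\Omega^\bullet_X/\sigma^{\geq k+1}$ via this duality (which is impossible because the latter has no skyscraper summand on the smooth locus containing arbitrarily close points), yields the desired vanishing $K\simeq 0$. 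Once this is established, the filtered quasi-isomorphism $\Omega^\bullet_X/\sigma^{\geq k+1}\cong\uOb_X/F^{k+1}$ follows at once, and taking $\Gr^p$ gives the $k$-Du Bois property for $X$.
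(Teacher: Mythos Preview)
Your first two paragraphs are fine: the idempotent splitting gives $\uOb_X/F^{k+1}\cong \Omega^\bullet_X/\sigma^{\geq k+1}\oplus K$ with $K$ supported on the finite set $\Sigma_k$, and the task is precisely to show $K\simeq 0$.

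The gap is in your third paragraph. What you call ``Kov\'acs's duality argument'' is not in fact a duality argument, and the route you sketch does not close. The map $\psi^p$ of Lemma~\ref{lem-psi} goes $\uOp_X\to\mathbb D_X(\uOnp_X)$; it does not relate $\mathbb D_X(\uOb_X/F^{k+1})$ back to $\Omega^\bullet_X/\sigma^{\geq k+1}$ (the indices do not match: dualizing the truncation at level $\le k$ involves $\mathbb D_X(\uOp_X)$ for $p\le k$, whereas $\psi$ would require control of $\uOnp_X$ for $p\le k$). More importantly, nothing in the hypotheses says $\psi^p$ is an isomorphism---that is the $k$-rational condition, and the proposition is strictly more general (it only assumes a left inverse). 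So the ``correspondence'' you invoke is unavailable, and the final clause about skyscraper summands on the smooth locus is not a valid argument: $K$ is supported at singular points, and Grothendieck duality preserves support, so dualizing gives you no new leverage.

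The missing idea is Hodge-theoretic and global. The paper's proof (following Kov\'acs) chooses a projective completion $Y\supseteq X$, sets $Z=(Y\setminus X)\cup\Sigma_k$ and $U=Y\setminus Z$, and uses the $E_1$-degeneration on the \emph{compact} $Y$ (Corollary~\ref{DBdegen}) to deduce that $\mathbb H^i(Y;\Omega^\bullet_Y/\sigma^{\geq k+1})\to\mathbb H^i(Y;\uOb_Y/F^{k+1})$ is surjective, since both receive a surjection from $H^i(Y;\Cee)$. The long exact sequence of cohomology with supports in $Z$, together with the fact that on $U$ the map is already an isomorphism, then forces surjectivity on $\mathbb H^i_{\Sigma_k}$. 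Your left inverse $H_k$ supplies injectivity on $\mathbb H^i_{\Sigma_k}$, hence an isomorphism there, and the localization principle finishes. The compactification step and the surjectivity from $E_1$-degeneration are the substantive inputs your proposal lacks; without them the local vanishing of $K$ does not follow.
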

\begin{proof}  Let $Y$ be some projective completion of $X$, so that $Y-X = D$. Let $Z = D \cup \Sigma_k \subseteq Y$. By hypothesis, $\Sigma_k$ is finite, hence $Z$ is closed. Let $U = X -\Sigma_k = Y - D - \Sigma_k$. Thus $U$ is $k$-Du Bois. Now consider the commutative diagram
$$\begin{CD}
\scriptstyle\mathbb{H}^{i-1}(U; \Omega^\bullet_U/\sigma^{\geq k+1}) @>>> \scriptstyle\mathbb{H}^i_Z(Y; \Omega^\bullet_Y/\sigma^{\geq k+1}) @>>> \scriptstyle\mathbb{H}^i(Y; \Omega^\bullet_Y/\sigma^{\geq k+1}) @>>> \scriptstyle\mathbb{H}^i(U; \Omega^\bullet_U/\sigma^{\geq k+1})\\
@VV{\cong}V @VVV @VVV @V{\cong}VV \\
\scriptstyle\mathbb{H}^{i-1}(U; \uOb_U/F^{k+1}) @>>>\scriptstyle \mathbb{H}^i_Z(Y; \uOb_Y/F^{k+1}) @>>> \scriptstyle\mathbb{H}^i(Y; \uOb_Y/F^{k+1}) @>>> \scriptstyle\mathbb{H}^i(U; \uOb_U/F^{k+1}).
\end{CD}$$
We claim that $\mathbb{H}^i(Y; \Omega^\bullet_Y/\sigma^{\geq k+1}) \to \mathbb{H}^i(Y; \uOb_Y/F^{k+1})$ is surjective: this is the usual argument that $H^i(Y;\Cee) \to \mathbb{H}^i(Y; \uOb_Y/F^{k+1})$ is surjective and factors through $H^i(Y;\Cee) \to \mathbb{H}^i(Y; \Omega^\bullet_Y/\sigma^{\geq k+1})$. Then $\mathbb{H}^i_Z(Y; \Omega^\bullet_Y/\sigma^{\geq k+1}) \to \mathbb{H}^i_Z(Y; \uOb_Y/F^{k+1})$ is surjective. We have compatible direct sum decompositions
$$\begin{CD}
\mathbb{H}^i_Z(Y; \Omega^\bullet_Y/\sigma^{\geq k+1}) @>{\cong}>> \mathbb{H}^i_D(Y; \Omega^\bullet_Y/\sigma^{\geq k+1}) \oplus \mathbb{H}^i_{\Sigma_k}(Y; \Omega^\bullet_Y/\sigma^{\geq k+1})\\
@VVV @VVV \\
\mathbb{H}^i_Z(Y; \uOb_Y/F^{k+1})  @>{\cong}>>  \mathbb{H}^i_D(Y; \uOb_Y/F^{k+1})\oplus  \mathbb{H}^i_{\Sigma_k}(Y; \uOb_Y/F^{k+1}).
\end{CD}$$
Hence $\mathbb{H}^i_{\Sigma_k}(Y; \Omega^\bullet_Y/\sigma^{\geq k+1}) \to \mathbb{H}^i_{\Sigma_k}(Y; \uOb_Y/F^{k+1})$ is surjective. By excision, $\mathbb{H}^i_{\Sigma_k}(Y; \Omega^\bullet_Y/\sigma^{\geq k+1}) \cong  \mathbb{H}^i_{\Sigma_k}(X; \Omega^\bullet_X/\sigma^{\geq k+1})$ and similarly for $\mathbb{H}^i_{\Sigma_k}(Y; \uOb_Y/F^{k+1})$. Thus,
$$ \mathbb{H}^i_{\Sigma_k}(X; \Omega^\bullet_X/\sigma^{\geq k+1}) \to \mathbb{H}^i_{\Sigma_k}(X; \uOb_X/F^{k+1})$$
is surjective. However, the existence of the left inverse $H_k$ gives a map $\mathbb{H}^i_{\Sigma_k}(X; \uOb_X/F^{k+1}) \to  \mathbb{H}^i_{\Sigma_k}(X; \Omega^\bullet_X/\sigma^{\geq k+1})$ such that the composed map $ \mathbb{H}^i_{\Sigma_k}(X; \Omega^\bullet_X/\sigma^{\geq k+1}) \to  \mathbb{H}^i_{\Sigma_k}(X; \Omega^\bullet_X/\sigma^{\geq k+1})$ is the identity.   Hence $$ \mathbb{H}^i_{\Sigma_k}(X; \Omega^\bullet_X/\sigma^{\geq k+1}) \to \mathbb{H}^i_{\Sigma_k}(X; \uOb_X/F^{k+1})$$
is also injective, and thus an isomorphism. Since  $\Omega^p_X|X-\Sigma_k \to \uOp_X|X-\Sigma_k$ is an isomorphism, it then follows from the ``Localization principle" of \cite[p.\ 186]{PS} that $\Omega^\bullet_X/\sigma^{\geq k+1} \cong \uOb_X /F^{k+1}$.
\end{proof} 

The following corollary then deals with the case $\dim \Sigma =0$ of Theorem~\ref{kratkDB}, in fact under the somewhat weaker hypothesis that $\dim \Sigma_k =0$.  

\begin{corollary} Suppose that $\dim \Sigma_k =0$ and that there exists a  left inverse $H_k \colon\uOb_X /F^{k+1} \to \Omega^\bullet_X/\sigma^{\geq k+1} $ to the map $\Omega^\bullet_X/\sigma^{\geq k+1} \to \uOb_X /F^{k+1}$. Then $X$ is $k$-Du Bois. In particular, if $X$ is $k$-rational and $\dim \Sigma_k =0$, then $X$ is $k$-Du Bois. 
\end{corollary}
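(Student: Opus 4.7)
The plan is to deduce the corollary directly from Proposition \ref{dim=0case} by passing to graded pieces of the stupid/Hodge filtration, using Lemmas \ref{existsleftinv} and \ref{lemma5.2} to supply the hypothesis of the proposition in the $k$-rational case.

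First I would handle the main statement. Assume $\dim \Sigma_k = 0$ and that a left inverse $H_k \colon \uOb_X/F^{k+1} \to \Omega^\bullet_X/\sigma^{\geq k+1}$ exists in the filtered derived category. Proposition \ref{dim=0case} then gives that the natural morphism $\Omega^\bullet_X/\sigma^{\geq k+1} \to \uOb_X/F^{k+1}$ (constructed from the collection of $\phi^p$) is an isomorphism. The crucial point is that this is an isomorphism of \emph{filtered} complexes: the map in question respects the filtrations by construction, and the localization principle invoked at the end of the proof of Proposition \ref{dim=0case} operates on this filtered morphism. Taking $\Gr^p$ for each $0 \le p \le k$ then yields that $\phi^p \colon \Omega^p_X \to \uOp_X$ is a quasi-isomorphism, which is exactly the $k$-Du Bois condition.

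For the ``in particular'' part, assume that $X$ is $k$-rational and $\dim \Sigma_k = 0$. By Lemma \ref{existsleftinv}, the assumption of $k$-rationality furnishes a (compatible) set of $k{+}1$ left inverses $\{h^p\}_{p=0}^k$ to $\phi^p$. Lemma \ref{lemma5.2} then upgrades this pointwise data into a single left inverse $H_k \colon \uOb_X/F^{k+1} \to \Omega^\bullet_X/\sigma^{\geq k+1}$ in the filtered derived category. Applying the first part of the corollary to $H_k$ concludes that $X$ is $k$-Du Bois. Note that compatibility of the $\{h^p\}$ is not actually needed here, since Lemma \ref{lemma5.2} only requires the existence of a set of left inverses in each degree.

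The main subtlety I expect is a bookkeeping one rather than a genuine obstacle: verifying that the isomorphism produced by Proposition \ref{dim=0case} is indeed an isomorphism in the \emph{filtered} derived category, so that the natural map induced on $\Gr^p$ coincides with $\phi^p$ and not merely some abstract quasi-isomorphism $\Omega^p_X \simeq \uOp_X$. This holds because the localization principle argument in Proposition \ref{dim=0case} promotes the natural map $\Omega^\bullet_X/\sigma^{\geq k+1} \to \uOb_X/F^{k+1}$ itself to a quasi-isomorphism (it already is one away from $\Sigma_k$ and an isomorphism on local cohomology along $\Sigma_k$), and this natural map strictly respects the filtrations, so passing to graded pieces recovers $\phi^p$ term by term.
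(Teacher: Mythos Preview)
Your argument has a genuine gap at the step where you pass from the conclusion of Proposition~\ref{dim=0case} to the quasi-isomorphisms $\phi^p$ for each $p\le k$. You assert that because the natural map $\Omega^\bullet_X/\sigma^{\geq k+1} \to \uOb_X/F^{k+1}$ is a quasi-isomorphism and ``strictly respects the filtrations,'' taking $\Gr^p$ shows each $\phi^p$ is a quasi-isomorphism. But a filtered morphism that is a quasi-isomorphism on the underlying complex need not be a \emph{filtered} quasi-isomorphism: the acyclic complex $\Cee\xrightarrow{\Id}\Cee$ with the stupid filtration, mapped to $0$, is a standard counterexample. Proposition~\ref{dim=0case} as stated and proved only produces an isomorphism in the unfiltered derived category; the localization principle there is applied to total hypercohomology, and the surjectivity input comes from $H^i(Y;\Cee)\twoheadrightarrow \mathbb{H}^i(Y;\uOb_Y/F^{k+1})$, which does not adapt to give surjectivity of $H^q(Y;\Omega^p_Y)\to \mathbb{H}^q(Y;\uOp_Y)$ for an individual $p$.

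The paper closes this gap by induction on $k$ and a two-out-of-three argument. Since $H_k$ lives in the filtered derived category, it induces a left inverse $H_{k-1}$ for $\Omega^\bullet_X/\sigma^{\geq k}\to \uOb_X/F^k$, and $\Sigma_{k-1}\subseteq\Sigma_k$ has dimension $\le 0$; hence by induction $X$ is $(k-1)$-Du Bois, so $\Omega^\bullet_X/\sigma^{\geq k}\to \uOb_X/F^k$ is already a quasi-isomorphism. Combining this with Proposition~\ref{dim=0case} (applied at level $k$) in the morphism of distinguished triangles
\[
\begin{CD}
\Omega^k_X[-k] @>>> \Omega^\bullet_X/\sigma^{\geq k+1} @>>> \Omega^\bullet_X/\sigma^{\geq k} @>{+1}>> \\
@VVV @VV{\cong}V @VV{\cong}V @. \\
\underline{\Omega}^k_X[-k] @>>> \uOb_X/F^{k+1} @>>> \uOb_X/F^k @>{+1}>>
\end{CD}
\]
forces $\Omega^k_X\to \underline{\Omega}^k_X$ to be an isomorphism. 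Your treatment of the ``in particular'' clause via Lemmas~\ref{existsleftinv} and \ref{lemma5.2} is fine.
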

\begin{proof}
The proof is by induction on $k$. There is a morphism of distinguished triangles
$$\begin{CD}
\Omega^k_X[-k]   @>>> \Omega^\bullet_X/\sigma^{\geq k+1} @>>> \Omega^\bullet_X/\sigma^{\geq k}   @>{+1}>> \\
 @VVV @VVV @VVV  \\
 \underline\Omega_X^k[-k]   @>>> \uOb_X/F^{k+1} @>>> \uOb_X/F^k @>{+1}>> .
 \end{CD}$$
 Here, the   center vertical arrow is an  isomorphism by Proposition~\ref{dim=0case} and the right vertical arrow is an isomorphism by induction. Thus, the left vertical arrow is an isomorphism as well.
 \end{proof}
 
 To handle the case where $\dim \Sigma_k > 0$, we consider the effect of passing to a general hyperplane section. Note that, if $H$ is an effective  Cartier divisor on $X$, since $\scrO_H$ is quasi-isomorphic to the complex $\scrO_X(-H) \to \scrO_X$,  we can define the operation $\otimes^{\mathbb{L}}\scrO_H$ on the (bounded) derived category,  and similarly for $\otimes^{\mathbb{L}}\scrO_H(-H)$. For a coherent sheaf $\mathcal{F}$ on $X$,  if $\mathit{Tor}_1^{\scrO_X}(\mathcal{F}, \scrO_H) =0$, then $\mathcal{F}\otimes^{\mathbb{L}}\scrO_H  = \mathcal{F}\otimes \scrO_H$. In particular, by Corollary~\ref{KDexactcor}, if $ \codim \Sigma \ge 2k-1$, then $\Omega^p_X\otimes^{\mathbb{L}}\scrO_H  = \Omega^p_X\otimes \scrO_H$ for all $p\le k+1$.
 
The following is due to Navarro Aznar \cite[V(2.2.1)]{GNPP}:

\begin{proposition}[Navarro Aznar]\label{NAV} If $X$ is an algebraic variety and $H$ is a general element of a base point free linear system on $X$, there is a distinguished triangle
$$\underline{\Omega}^{p-1}_H \otimes^{\mathbb{L}} \scrO_H(-H) \xrightarrow{df\wedge} \uOp_X \otimes^{\mathbb{L}}\scrO_H \to \uOp_H \xrightarrow{+1}. \qed$$
\end{proposition}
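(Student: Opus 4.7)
The plan is to reduce the triangle to the well-known smooth case by means of a simultaneous cubical hyperresolution compatible with the divisor $H$. I would construct an augmented cubical hyperresolution $\epsilon_\bullet \colon X_\bullet \to X$ with all $X_i$ smooth, such that if $H_\bullet := \epsilon_\bullet^{-1}(H)$, then (a) each $H_i$ is a smooth Cartier divisor in $X_i$, and (b) the induced augmentation $H_\bullet \to H$ is itself a cubical hyperresolution of $H$. Since a cubical hyperresolution is built inductively from finitely many smooth varieties, Bertini applied stratum by stratum to the given base point free linear system allows one to choose $H$ so that all the required transversality holds simultaneously.

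Once this simultaneous hyperresolution is in hand, on each $X_i$ the smooth divisor $H_i$ gives the standard short exact sequence of K\"ahler differentials
$$0 \to \Omega^{p-1}_{H_i} \otimes \scrO_{H_i}(-H_i) \xrightarrow{df_i \wedge} \Omega^p_{X_i}|_{H_i} \to \Omega^p_{H_i} \to 0,$$
where $f_i$ is a local equation for $H_i$ in $X_i$ and the connecting map is wedging with $df_i$. These sequences assemble into a short exact sequence of simplicial coherent sheaves on $H_\bullet$, which under $R\epsilon_{\bullet *}$ yields a distinguished triangle.

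To match this with the stated one, I would use the defining formula $\uOp_X \cong R\epsilon_{\bullet *} \Omega^p_{X_\bullet}$ together with flat base change. Resolving $\scrO_H$ by the Koszul complex $[\scrO_X(-H) \to \scrO_X]$, derived tensor product commutes with $R\epsilon_{\bullet *}$, giving
$$R\epsilon_{\bullet *}\bigl(\Omega^p_{X_\bullet}|_{H_\bullet}\bigr) = R\epsilon_{\bullet *}\bigl(\Omega^p_{X_\bullet}\otimes^{\mathbb{L}} \epsilon_\bullet^*\scrO_H\bigr) \cong \uOp_X \otimes^{\mathbb{L}} \scrO_H,$$
and analogous identifications of the outer two terms as $\underline{\Omega}^{p-1}_H \otimes^{\mathbb{L}} \scrO_H(-H)$ and $\uOp_H$. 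The connecting map is simplicially compatible and equals wedge with $df_i$ on each stratum, so globally it identifies with the claimed $df \wedge$.

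The main obstacle is the existence of the simultaneous hyperresolution, which is precisely the content of Navarro Aznar's construction in GNPP V. At each inductive step of the hyperresolution one must ensure that the chosen blowup locus remains transverse to $H$, and that the simplicial face maps behave well under base change to $H$. The base point freeness of the linear system and the genericity of $H$ reduce these conditions to finitely many standard Bertini transversality statements; once that ingredient is granted, the remainder of the argument is a formal assembly of the smooth case.
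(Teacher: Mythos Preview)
The paper does not give its own proof of this proposition: it is stated with a terminal \qed\ and attributed directly to Navarro Aznar \cite[V(2.2.1)]{GNPP}. So there is nothing in the paper to compare your argument against beyond the citation itself.

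Your sketch is a reasonable reconstruction of the strategy one finds in GNPP: build a cubical hyperresolution $X_\bullet\to X$ for which a general $H$ in the linear system pulls back to smooth divisors $H_i\subset X_i$ and $H_\bullet\to H$ is again a hyperresolution, then push forward the termwise conormal sequence on the smooth strata. One small correction: the isomorphism
\[
R\epsilon_{\bullet *}\bigl(\Omega^p_{X_\bullet}\otimes^{\mathbb L}\epsilon_\bullet^*\scrO_H\bigr)\;\cong\;\uOp_X\otimes^{\mathbb L}\scrO_H
\]
is an instance of the projection formula (derived tensor with the perfect complex $[\scrO_X(-H)\to\scrO_X]$ commutes with $R\epsilon_{\bullet *}$), not of flat base change. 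The genuinely delicate point, as you correctly flag, is the existence of the simultaneous hyperresolution compatible with a general $H$; this is exactly what is supplied in \cite[V]{GNPP}, and is the reason the paper simply cites the result rather than reproving it.
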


Here the term $\underline{\Omega}^{p-1}_H \otimes^{\mathbb{L}} \scrO_H(-H)$ refers to the tensor product as $\scrO_H$-modules, i.e.\ to 
$\underline{\Omega}^{p-1}_H \otimes^{\mathbb{L}}_{\scrO_H} \scrO_H(-H)$, and we could write this as $\underline{\Omega}^{p-1}_H \otimes \scrO_H(-H)$ or simply as $\underline{\Omega}^{p-1}_H$ in case $\scrO_H(-H) \cong \scrO_H$.

Recall that, in Proposition~\ref{KDexact},  we  defined the (not necessarily exact) sequence
\begin{equation*}
0 \to \Omega^{p-1}_H  \otimes  \scrO_H(-H) \to \Omega^p_X|H \to \Omega^p_H \to 0,\tag*{$(*)_p$}
\end{equation*}
Since the maps in the above distinguished triangle are clearly compatible with the augmentation maps $\phi^p$, we get:

\begin{corollary}\label{morphtriangles} If $(*)_p$ is   exact, then there is a morphism of distinguished triangles
$$\begin{CD}
\Omega^{p-1}_H \otimes \scrO_H(-H)  @>{df \wedge}>>  \Omega^p_X\otimes \scrO_H @>>> \Omega^p_H @>{+1}>> \\
@VVV @VVV @VVV @.\\
\underline{\Omega}^{p-1}_H  \otimes^{\mathbb{L}} \scrO_H(-H)  @>{df \wedge}>> \uOp_X \otimes^{\mathbb{L}}\scrO_H @>>>\uOp_H @>{+1}>>  
\end{CD}$$
 In particular, if $X$ is $(k-1)$-Du Bois and lci, then $(*)_p$ is   exact for $p \le k$ and hence there is a morphism of distinguished triangles as above for  $p \le k$.  
\end{corollary}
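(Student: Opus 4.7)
The plan is the following. First, I reinterpret the short exact sequence $(*)_p$ as a distinguished triangle whose shape matches that of Proposition~\ref{NAV}. Second, I define the vertical maps using the natural augmentations $\phi\colon \Omega^\bullet \to \underline{\Omega}^\bullet$. Third, I verify the commutativity of the two squares. The ``in particular'' statement then reduces to combining the codimension bound of Theorem~\ref{thmdim} with Proposition~\ref{KDexact}.

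The exactness of $(*)_p$ produces a distinguished triangle
$$\Omega^{p-1}_H \otimes \scrO_H(-H) \xrightarrow{df\wedge} \Omega^p_X \otimes \scrO_H \to \Omega^p_H \xrightarrow{+1}$$
in $D^{b}_{\text{\rm{coh}}}(H)$, where on a local patch with $H = V(f)$ the first arrow sends $\omega \otimes \bar f \mapsto (df \wedge \tilde\omega)|_H$. To match this top row with Navarro Aznar's triangle, which uses derived tensor products, I would invoke Corollary~\ref{KDexactcor}: under the standing codimension hypothesis $\codim \Sigma \ge 2k-1$ of Proposition~\ref{KDexact}, it gives $\mathit{Tor}_1^{\scrO_X}(\Omega^p_X, \scrO_H) = 0$ for $p \le k+1$, hence $\Omega^p_X \otimes \scrO_H \cong \Omega^p_X \otimes^{\mathbb{L}} \scrO_H$, and similarly for the leftmost term.

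The vertical maps are the augmentations $\phi$ of the Deligne-Du Bois comparison morphism, applied to $X$ and tensored with $\scrO_H$ in the first two columns, and applied directly to $H$ in the third. The right square commutes by the naturality of $\phi^p$ with respect to the closed embedding $H \hookrightarrow X$, a standard feature of the filtered de Rham functor. For the left square, I would work on a cubical hyperresolution of the pair $(X, H)$: on each smooth component the wedge-by-$df$ operation is an honest morphism of K\"ahler differentials, and it visibly commutes with the augmentations. For the ``in particular'' statement, if $X$ is $(k-1)$-Du Bois and lci, then Theorem~\ref{thmdim} gives $\codim \Sigma \ge 2(k-1)+1 = 2k-1$, which is exactly the hypothesis required by Proposition~\ref{KDexact} to ensure exactness of $(*)_p$ for every $p \le k$.

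The main obstacle is the commutativity of the left square. The right square is formal naturality, and the ``in particular'' statement is a direct application of earlier results; but matching $\phi$ with the wedge operation $df \wedge \cdot$ requires realizing the latter at the level of actual chain maps on a hyperresolution compatible with $H$, rather than merely in the derived category. Once this realization is in place, commutativity is immediate.
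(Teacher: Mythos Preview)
Your proposal is correct and follows essentially the same approach as the paper. The paper's own proof is extremely terse: it simply asserts (in the sentence immediately preceding the corollary) that the maps in Navarro Aznar's triangle are ``clearly compatible with the augmentation maps $\phi^p$,'' and then says ``We only need to check the final statement,'' which it dispatches exactly as you do, via Theorem~\ref{thmdim} and Proposition~\ref{KDexact}. You have simply unpacked the ``clearly compatible'' step---identifying the top row as a triangle, invoking Corollary~\ref{KDexactcor} to pass between $\otimes$ and $\otimes^{\mathbb{L}}$, and discussing why the squares commute---which the paper leaves to the reader.

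One small remark: you invoke the codimension bound $\codim\Sigma\ge 2k-1$ (via Corollary~\ref{KDexactcor}) already in the first part of the argument, whereas the corollary's first assertion only hypothesizes exactness of $(*)_p$. This is harmless in practice, since the only place the corollary is used (Theorem~\ref{mainkratthm}) explicitly records $\Omega^p_X\otimes^{\mathbb{L}}\scrO_H=\Omega^p_X\otimes\scrO_H$ via Corollary~\ref{KDexactcor} under exactly that codimension bound; the paper is tacitly working under the same standing hypothesis.
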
 
\begin{proof} We only need to check the final statement. By Theorem~\ref{thmdim}, $\codim \Sigma  \geq 2k-1$. Thus, for a general $H$, by Proposition~\ref{KDexact}, the sequence $(*)_p$ is exact for all $p\le k$. 
\end{proof}

The key step is then the following:

\begin{theorem}\label{mainkratthm} Let $X$ be a reduced   local complete intersection and suppose that  there exists a compatible  set of $k+1$  left inverses $\{h^p\}_{p=0}^k$.  Then $X$ is $k$-Du Bois.
\end{theorem}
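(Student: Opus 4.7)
The plan is to proceed by a nested induction: a primary induction on $k$ and, inside it, a secondary induction on $\dim \Sigma$.  The primary induction lets me assume that $X$ is already $(k-1)$-Du Bois, so that (after using $\phi^p$ to identify $\uOp_X$ with $\Omega^p_X$ for $p<k$) the given left inverses $h^p$ are the identity for $p<k$ and only $\phi^k$ remains to be shown a quasi-isomorphism.  In particular Theorem~\ref{thmdim} applies and gives $\codim \Sigma \geq 2k-1$.  For the base case of the secondary induction, $\dim \Sigma = 0$, Lemma~\ref{lemma5.2} lifts $\{h^p\}_{p=0}^k$ to a left inverse $H_k\colon \uOb_X/F^{k+1} \to \Omega^\bullet_X/\sigma^{\geq k+1}$ in the filtered derived category, and Proposition~\ref{dim=0case} together with the corollary following it yield that $X$ is $k$-Du Bois.

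For the inductive step, assume $\dim \Sigma \geq 1$.  By Proposition~\ref{KDexact} and Corollary~\ref{morphtriangles}, a sufficiently generic member $H$ of a base-point-free linear system is itself reduced and lci with $\dim \Sigma(H) = \dim \Sigma - 1$; the sequences $(\ast)_p$ are exact for $p\leq k$, and one obtains a morphism of distinguished triangles relating the K\"ahler and Du Bois sides on $H$.  To transfer the left inverses to $H$, note that Corollary~\ref{KDexactcor} gives $\mathit{Tor}_1^{\scrO_X}(\Omega^k_X, \scrO_H)=0$, so $h^k\otimes \scrO_H$ is a genuine map of sheaves on $H$; the compatibility relation $h^k\circ (df\wedge) = (df\wedge)\circ h^{k-1}$ together with $h^{k-1}=\Id$ (using $(k-1)$-Du Bois) makes $h^k\otimes \scrO_H$ commute with the first two maps in the morphism of triangles, and completing the triangle produces a morphism $h^k_H\colon \underline{\Omega}^k_H \to \Omega^k_H$ which, by a diagram chase, is a left inverse to $\phi^k_H$.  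Compatibility of $\{h^p_H\}_{p=0}^k$ follows by lifting sections of $\Omega^1_H$ locally to sections of $\Omega^1_X$ and restricting the compatibility on $X$.  The secondary induction hypothesis applied to $H$ now shows $H$ is $k$-Du Bois, and the $5$-lemma applied to the morphism of triangles gives that $\phi^k \otimes^{\mathbb{L}} \scrO_H$ is a quasi-isomorphism, i.e.\ $C := \operatorname{cone}(\phi^k)$ satisfies $C\otimes^{\mathbb{L}}\scrO_H = 0$.

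The last task, which I expect to be the main obstacle, is to descend this vanishing from $H$ back to $X$.  Let $i_0$ be the smallest integer with $\mathcal{H}^{i_0}C\neq 0$; since $\scrO_H$ has Tor-dimension one, the hyper-Tor spectral sequence degenerates at its lowest nonvanishing cohomology sheaf to give $\mathcal{H}^{i_0}C\otimes \scrO_H = \mathcal{H}^{i_0}(C\otimes^{\mathbb{L}}\scrO_H) = 0$.  The support of $\mathcal{H}^{i_0}C$ is contained in $\Sigma$, and for any $x$ in this support one can arrange $H$ to satisfy the Bertini genericity conditions of Proposition~\ref{KDexact} and in addition pass through $x$; Nakayama's lemma then forces $(\mathcal{H}^{i_0}C)_x = 0$, a contradiction, and iterating on $i$ yields $C=0$.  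The genuine work lies in the bookkeeping of the second paragraph, namely in verifying that the transferred data on $H$ satisfies exactly the hypothesis of the theorem at the same value of $k$; once this is in place, both the $5$-lemma and the Nakayama-style descent are essentially forced by the codimension estimates of Theorem~\ref{thmdim} and the Tor-vanishing of Corollary~\ref{KDexactcor}.
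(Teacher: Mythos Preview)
Your strategy matches the paper's: induct on $k$ and on the dimension of a singular set, pass to a general hyperplane section $H$, transport the compatible left inverses to $H$, apply the inductive hypothesis there, and descend. The cosmetic differences --- the paper inducts on $\dim\Sigma_k$ (the non-$k$-Du Bois locus) rather than $\dim\Sigma$, and argues by contradiction with a single general $H$ meeting $\Sigma_k$ rather than varying $H$ through each point of $\operatorname{Supp}\mathcal H^{i_0}C$ --- do not affect correctness; both work.

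There is, however, one genuine gap in your write-up. Your justification of the compatibility of $\{h^p_H\}_{p=0}^k$ --- ``lift sections of $\Omega^1_H$ to $\Omega^1_X$ and restrict the compatibility on $X$'' --- does not go through as stated. The map $h^k_H$ is produced by completing a triangle and need not agree with any restriction of $h^k$; so you cannot simply transport the relation $h^k\circ(\tilde\alpha\wedge)=(\tilde\alpha\wedge)\circ h^{k-1}$ from $X$ to $H$. The fix, which is what the paper does, is first to note that $H$ is itself $(k-1)$-Du Bois: this follows by an inner induction on $p\le k-1$ using the morphism of triangles in Corollary~\ref{morphtriangles} and the five lemma, starting from $X$ being $(k-1)$-Du Bois. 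Once this is in hand, $h^{k-1}_H=\Id$, and compatibility of $h^k_H$ with $h^{k-1}_H$ becomes automatic: for $\alpha\in H^0(H;\Omega^1_H)$ the map $\alpha\wedge\colon \underline\Omega^{k-1}_H=\Omega^{k-1}_H\to\underline\Omega^k_H$ factors through $\phi^k_H$, so $h^k_H\circ(\alpha\wedge)=\alpha\wedge=(\alpha\wedge)\circ h^{k-1}_H$ just by the left-inverse property. No lifting to $X$ is needed for this step. (A minor imprecision in your descent: the Tor spectral sequence gives an \emph{injection} $\mathcal H^{i_0}C\otimes\scrO_H\hookrightarrow\mathcal H^{i_0}(C\otimes^{\mathbb L}\scrO_H)$ rather than an equality, but this still yields the vanishing you want.)
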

\begin{proof} We will assume that $\Sigma_k(X) = \Sigma_k \neq \emptyset$ and derive a contradiction. As this is a local question, we can assume that $X$ is affine. The proof is by induction both on $k$ and on $\dim \Sigma _k$. The cases $k=0$  and $\dim \Sigma _k =0$ have been proved by  Kov\'acs \cite{Kovacs99} or Proposition~\ref{dim=0case}, or follow by starting the induction at $k=-1$.  Assume inductively that the theorem has been proved for all $j < k$ and  for all varieties $H$ with $\dim \Sigma_k(H) < \dim \Sigma _k$. In particular, $X$ is $(k-1)$-Du Bois, and hence $\codim \Sigma  \geq 2k-1$. Choose a general $H$ as in Proposition~\ref{NAV}. Here the hypothesis that $H$ is general means in particular that (i) $\dim \Sigma_k \cap H < \dim \Sigma _k$ and (ii) if $\dim \Sigma _k >0$, then $\Sigma_k\cap H \neq \emptyset$. We claim that $\Sigma_k(H) \neq \emptyset$. For otherwise $H$ is $k$-Du Bois, hence $\Omega^p_H \cong \uOp_H$ for all $p\le k$. Then $\underline{\Omega}^k_X \otimes^{\mathbb{L}} \scrO_H\cong \Omega^k _X \otimes^{\mathbb{L}}\scrO_H$ as follows from the morphism of distinguished triangles in Corollary~\ref{morphtriangles}. On the other hand, we have the distinguished triangle
$$ \Omega^k _X\otimes^{\mathbb{L}} \scrO_H\to \underline{\Omega}^k_X  \otimes^{\mathbb{L}}\scrO_H\to \mathcal{G}^k\otimes^{\mathbb{L}}\scrO_H \xrightarrow{+1} .$$
Since $\scrO_H$ has the projective resolution $\scrO_X \xrightarrow{\times f} \scrO_X$, for all $p\le k$, and all $i$, there is an exact sequence
$$ 0 \to Tor_1^{\scrO_X}(\mathcal{H}^i\mathcal{G}^p, \scrO_H) \to \mathcal{H}^i(\mathcal{G}^p\otimes^{\mathbb{L}}\scrO_H) \to (\mathcal{H}^i\mathcal{G}^p )\otimes \scrO_H \to 0.$$
Thus $\displaystyle\bigcup_{\substack{ i, p\\ 0\le p \le k}}\operatorname{Supp} \mathcal{H}^i(\mathcal{G}^p\otimes^{\mathbb{L}}\scrO_H) =\Sigma_k\cap H \neq \emptyset$, and hence $\underline{\Omega}^k_X \otimes^{\mathbb{L}} \scrO_H\to \Omega^k _X \otimes^{\mathbb{L}}\scrO_H$ is not an isomorphism.

For all $p\le k$,  we claim that  we can find  a compatible  set of $p+1$  left inverses  $\{\bar{h}^i\}_{i=0}^p $. Again, we argue by induction on $p$ and the case $p=0$ is clear. For the inductive step, assume that there exists a compatible set of $p$  left inverses $\{\bar{h}^i\}_{i=0}^{p-1}$  for $H$. By the inductive hypothesis, $H$ and $X$ are $(p-1)$-Du Bois, and thus we can take $\bar{h}^i = \Id$ for all $i \le p-1$. Also, as noted above, by Corollary~\ref{KDexactcor}, $\Omega^p_X\otimes^{\mathbb{L}} \scrO_H= \Omega^p_X\otimes \scrO_H$.  In what follows, we fix the function $f$ and will omit the factor $\scrO_H(-H)$ as it is trivialized. 

By the assumption of compatibility, there is a commutative diagram
$$\begin{CD}
\Omega^{p-1}_X\otimes \scrO_H  @>{df \wedge}>> \uOp_X \otimes^{\mathbb{L}}\scrO_H\\
@VV{=}V @VV{h^p\otimes \Id}V \\
\Omega^{p-1}_X\otimes \scrO_H  @>{df \wedge}>> \Omega^p_X\otimes \scrO_H.
\end{CD}$$
There is an induced diagram
$$\begin{CD}
\Omega^{p-1}_X\otimes \scrO_H  @>>> \Omega^{p-1}_H @>{df \wedge}>> \uOp_X \otimes^{\mathbb{L}}\scrO_H\\
@VV{=}V @VV{=}V @VV{h^p\otimes \Id}V \\
\Omega^{p-1}_X\otimes \scrO_H  @>>> \Omega^{p-1}_H @>{df \wedge}>> \Omega^p_X\otimes \scrO_H.
\end{CD}$$
Here, the left hand square is commutative and the outer rectangle is commutative. Since all terms except for $\uOp_X \otimes^{\mathbb{L}}\scrO_H$ are sheaves and the morphism $\Omega^{p-1}_X\otimes \scrO_H  \to \Omega^{p-1}_H$ is surjective, a straightforward argument shows that the right hand square is commutative as well. 

There is a diagram of distinguished triangles
$$\begin{CD}
\underline{\Omega}^{p-1}_H\cong \Omega^{p-1}_H   @>{df \wedge}>> \uOp_X \otimes^{\mathbb{L}}\scrO_H @>>>\uOp_H @>{+1}>> \\
@VV{=}V @VV{h^p\otimes \Id}V @. @.\\
\Omega^{p-1}_H @>{df \wedge}>>  \Omega^p_X\otimes \scrO_H @>>> \Omega^p_H @>{+1}>> 
\end{CD}$$
 Thus, we can complete the diagram by finding $\bar{h}^p$ which makes the diagram commute, and it is automatically a left inverse to the map $\Omega^p_H \to \uOp_H$ since $\Omega^p_X\otimes \scrO_H \to \Omega^p_H$ is surjective. We claim that any such $\bar{h}^p$ is compatible with $\bar{h}^{p-1}=\Id$: Since $\underline{\Omega}^{p-1}_H \cong \Omega^{p-1}_H$, there is a commutative diagram with vertical arrows given by $\alpha\wedge$:
$$\begin{CD}
\underline{\Omega}^{p-1}_H \cong \Omega^{p-1}_H @>{=}>> \Omega^{p-1}_H \\
@VVV @VVV \\
\uOp_H @>{\bar{h}^p}>> \Omega^p_H
\end{CD}$$
As $\bar{h}^p$ is a left inverse, for all $\varphi \in \underline{\Omega}^{p-1}_H \cong \Omega^{p-1}_H $, $\bar{h}^p(\alpha\wedge \varphi) = \alpha\wedge \varphi = \alpha\wedge \bar{h}^{p-1}(\varphi)$. It follows that $\bar{h}^p$ is compatible with $\bar{h}^{p-1}$.   This completes the inductive step for $p$. 

But then, since $\dim \Sigma_k(H) < \dim \Sigma _k$, by the inductive hypothesis $H$ is $k$-Du Bois and hence $\Sigma_k(H) =\emptyset$. This  contradicts  the statement that $\Sigma_k(H) \neq\emptyset$.
\end{proof} 

The following is then the lci case of Theorem~\ref{kratkDB}:

\begin{corollary}\label{kratkDBlci} If $X$ is a $k$-rational algebraic variety with lci singularities, then $X$ is $k$-Du Bois.
\end{corollary}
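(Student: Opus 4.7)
The plan is to derive this corollary as an immediate two-line synthesis of the two preparatory results already established, namely Lemma~\ref{existsleftinv} and Theorem~\ref{mainkratthm}. First I would apply Lemma~\ref{existsleftinv} to $X$: the hypothesis that $X$ is $k$-rational means by Definition~\ref{defkrat} that the composite $\gamma^p=\psi^p\circ\phi^p\colon \Omega^p_X\to \mathbb{D}_X(\underline{\Omega}^{n-p}_X)$ is a quasi-isomorphism for all $0\le p\le k$. Inverting $\gamma^p$ and precomposing the natural factorization $\uOp_X\to \mathbb{D}_X(\underline{\Omega}^{n-p}_X)$ with $(\gamma^p)^{-1}$ produces a left inverse $h^p$ to $\phi^p$, and the commutativity (up to sign) of the $\alpha\wedge$ diagram displayed in the proof of Lemma~\ref{existsleftinv} forces the identity $h^p\circ(\alpha\wedge)=(\alpha\wedge)\circ h^{p-1}$, so the resulting family $\{h^p\}_{p=0}^k$ is compatible in the sense required by Theorem~\ref{mainkratthm}.

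Second, I would invoke Theorem~\ref{mainkratthm} directly. Since $X$ is an algebraic variety (hence reduced) and, by hypothesis, has lci singularities, the theorem applies verbatim to the compatible family of left inverses produced in the previous step, and gives precisely the conclusion that $X$ is $k$-Du Bois. No further bookkeeping is needed.

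Thus for this corollary there is no genuine obstacle; the substantive work has been carried out in Theorem~\ref{mainkratthm}, where the main difficulty is a double induction on $k$ and on $\dim\Sigma_k$: the base cases $k=0$ or $\dim\Sigma_k=0$ use the Kov\'acs-type localization argument (Proposition~\ref{dim=0case}), while the inductive step requires cutting by a generic hyperplane $H$ (Navarro Aznar's distinguished triangle of Proposition~\ref{NAV}), using the codimension bound from Theorem~\ref{thmdim} together with Proposition~\ref{KDexact} and Corollary~\ref{KDexactcor} to guarantee that the operations $\otimes^{\mathbb{L}}\scrO_H$ and $\otimes\scrO_H$ agree on $\Omega^p_X$ in the relevant range, and then propagating the compatible set of left inverses from $X$ down to $H$ to trigger the induction.
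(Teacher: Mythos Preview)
Your proposal is correct and follows exactly the paper's own proof, which consists of the single line ``This follows from Lemma~\ref{existsleftinv} and Theorem~\ref{mainkratthm}.'' The additional recap you give of how Lemma~\ref{existsleftinv} produces the compatible left inverses and of the inductive mechanism inside Theorem~\ref{mainkratthm} is accurate but not needed for the corollary itself.
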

\begin{proof} This follows from Lemma~\ref{existsleftinv} and Theorem~\ref{mainkratthm}. 
\end{proof}

In fact, the proof also shows the following result of Musta\c{t}\u{a}-Popa \cite[Theorem 9.17]{MP-loc}:

\begin{corollary} If $X$ is an algebraic variety with $k$-Du Bois singularities, then a general complete intersection $H_1\cap \cdots \cap H_a$ of $X$  is $k$-Du Bois. \qed
\end{corollary}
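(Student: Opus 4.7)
The plan is to reduce to the case $a=1$ by induction on $a$, and then extract exactly what we need from the internal machinery of the proof of Theorem~\ref{mainkratthm}. For the inductive step, once a general hyperplane section $H$ is known to be $k$-Du Bois, it is in particular reduced lci (as a general hyperplane section of a reduced lci variety), so intersecting with a further generic hyperplane section of $H$ is again an instance of the $a=1$ case applied to $H$ in place of $X$.

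For $a=1$, the starting observation is that if $X$ is $k$-Du Bois then $\phi^p\colon \Omega^p_X \to \uOp_X$ is a quasi-isomorphism for all $p\le k$, so taking $h^p$ to be a chosen inverse (equivalently, identifying $\Omega^p_X$ with $\uOp_X$ via $\phi^p$ and setting $h^p=\Id$) gives a trivially \emph{compatible} set of $k+1$ left inverses on $X$. Moreover, by Theorem~\ref{thmdim} the lci $k$-Du Bois hypothesis forces $\codim\Sigma\ge 2k+1\ge 2k-1$, which is exactly the hypothesis required to invoke Proposition~\ref{KDexact} and the morphism of distinguished triangles in Corollary~\ref{morphtriangles} for a general $H$ in a base point free linear system.

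Next, I would replay, verbatim, the inductive construction in the proof of Theorem~\ref{mainkratthm} that manufactures compatible left inverses on $H$ from compatible left inverses on $X$: for each $p\le k$, one uses the commutative diagram of distinguished triangles
\[
\begin{CD}
\Omega^{p-1}_H @>{df\wedge}>> \uOp_X\otimes^{\mathbb L}\scrO_H @>>> \uOp_H @>{+1}>>\\
@VV{=}V @VV{h^p\otimes\Id}V @. @.\\
\Omega^{p-1}_H @>{df\wedge}>> \Omega^p_X\otimes\scrO_H @>>> \Omega^p_H @>{+1}>>
\end{CD}
\]
(valid by Proposition~\ref{NAV}, Corollary~\ref{KDexactcor}, and the compatibility of $\{h^p\}$) to produce $\bar h^p\colon \uOp_H\to\Omega^p_H$ completing the right-hand square, and one checks, exactly as in the proof of Theorem~\ref{mainkratthm}, that any such $\bar h^p$ is automatically a left inverse to $\phi^p_H$ and is compatible with $\bar h^{p-1}$. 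The output is a compatible set $\{\bar h^p\}_{p=0}^k$ of $k+1$ left inverses on $H$.

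Since $H$ is a reduced lci variety equipped with a compatible set of $k+1$ left inverses, Theorem~\ref{mainkratthm} applies directly and yields that $H$ is $k$-Du Bois, completing the case $a=1$ and hence the induction. The only place where real work is required is the diagram chase producing $\bar h^p$, which is identical to the one in the proof of Theorem~\ref{mainkratthm}; no new ideas beyond that proof are needed, since starting from a $k$-Du Bois $X$ is strictly more favorable than starting from a $k$-rational one.
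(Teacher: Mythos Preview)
Your proposal is correct and matches what the paper intends: the paper gives no explicit proof beyond ``the proof also shows'', and your argument recycles precisely the left-inverse construction from the proof of Theorem~\ref{mainkratthm} together with the obvious reduction to $a=1$. A slight shortcut is available, since when $X$ is $k$-Du Bois the middle vertical arrow in the morphism of triangles of Corollary~\ref{morphtriangles} is already an \emph{isomorphism}, so induction on $p$ and the two-out-of-three property give $\Omega^p_H\cong\uOp_H$ directly, without first building left inverses on $H$ and then reinvoking Theorem~\ref{mainkratthm} as a black box; but your route is equally valid.
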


As an application of Corollary~\ref{kratkDBlci}, we have:

 \begin{proposition}\label{0.6}   Let $f:\cY\to S$ be a flat proper family of complex algebraic varieties of relative dimension $n$ over an  irreducible base $S$. For $s\in S$, suppose that the fiber $Y_s$ has $k$-rational  lci singularities.  Then,  for every fiber $t$ such that $Y_t$ is smooth, $\dim \Gr_F^p H^{p+q}(Y_t) = \dim \Gr_F^{n-p} H^{2n-p-q}(Y_s)$ for every $q$ and for $0\le p \le k$.  Equivalently, for such $p$ and $q$,  $$h^{p,q}(Y_t) = h^{n-p, n-q}(Y_t) = \uh^{p,q}(Y_s) = \uh^{n-p, n-q}(Y_s).$$
\end{proposition}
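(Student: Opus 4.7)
The plan is to deduce Proposition~\ref{0.6} by assembling three pieces that are already in place: (a) Corollary~\ref{kratkDBlci}, which upgrades the $k$-rational lci hypothesis on $Y_s$ to $k$-Du Bois; (b) Corollary~\ref{mainThmcor}, which then gives $\uh^{p,q}(Y_s)=h^{p,q}(Y_t)$ for $0\le p\le k$; and (c) Corollary~\ref{cor-sym1}, the Serre-type symmetry $\uh^{p,q}(Y_s)=\uh^{n-p,n-q}(Y_s)$ available under the $k$-rational lci hypothesis. Combined with classical Serre duality on the smooth fiber $Y_t$, these three identities identify all four quantities in the statement.

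In more detail: since $Y_s$ has $k$-rational lci singularities, Corollary~\ref{kratkDBlci} (the lci case of Theorem~\ref{kratkDB}) implies that $Y_s$ is also $k$-Du Bois. After replacing $S$ by a neighborhood of $s$ if necessary, Theorem~\ref{mainThm} applies and guarantees that $R^q f_*\Omega^p_{\cY/S}$ is locally free and compatible with base change for $0\le p\le k$; Corollary~\ref{mainThmcor} then extracts the equality $\uh^{p,q}(Y_s)=h^{p,q}(Y_t)$ for all $0\le p\le k$ and every $t$ with $Y_t$ smooth. Next, since $Y_s$ is compact with $k$-rational lci singularities, Corollary~\ref{cor-sym1} yields the dual identification $\uh^{p,q}(Y_s)=\uh^{n-p,n-q}(Y_s)$ in the same range of $p$. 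Finally, $h^{p,q}(Y_t)=h^{n-p,n-q}(Y_t)$ is classical Serre duality on the smooth projective variety $Y_t$. Chaining these equalities gives
$$h^{p,q}(Y_t)=h^{n-p,n-q}(Y_t)=\uh^{p,q}(Y_s)=\uh^{n-p,n-q}(Y_s),$$
which is exactly the claim, and in the language of Hodge filtrations, $\dim\Gr_F^p H^{p+q}(Y_t)=\dim\Gr_F^{n-p}H^{2n-p-q}(Y_s)$.

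Because the proposition is essentially a formal corollary of results proved earlier in the paper, there is no genuine obstacle. The only subtlety worth flagging is that Corollary~\ref{mainThmcor} controls Hodge numbers only in the range $0\le p\le k$, so it cannot directly be used to compare $h^{n-p,n-q}(Y_t)$ with $\uh^{n-p,n-q}(Y_s)$ (where the index $n-p$ lies outside that range). One is therefore forced to route the ``dual'' comparison through the Hodge symmetry supplied by Corollary~\ref{cor-sym1}, which is precisely the additional feature that the $k$-rational (as opposed to merely $k$-Du Bois) hypothesis buys. This is the conceptual content of the statement and the reason it goes beyond Corollary~\ref{mainThmcor}.
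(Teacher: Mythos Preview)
Your proof is correct and follows essentially the same approach as the paper: both use Corollary~\ref{kratkDBlci} to pass to $k$-Du Bois, then Theorem~\ref{mainThm} (packaged as Corollary~\ref{mainThmcor}) for the comparison $\uh^{p,q}(Y_s)=h^{p,q}(Y_t)$, and finally the $k$-rational Serre-type duality. The only cosmetic difference is that you invoke the packaged Corollary~\ref{cor-sym1}, whereas the paper re-derives that step inline via the quasi-isomorphism $\Omega^p_{Y_s}\cong\bD_{Y_s}(\underline\Omega^{n-p}_{Y_s})$ and Grothendieck duality.
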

\begin{proof}  By Corollary~\ref{kratkDBlci}, $Y_s$ is  $k$-Du Bois.  By Theorem~\ref{mainThm}, for $p\le k$, $R^qf_*\Omega^p_{\cY/S}$ is locally free in a neighborhood of $s$ and compatible with base change.  Thus,
$$\dim \Gr_F^p H^{p+q}(Y_t) = H^q(Y_t; \Omega^p_{Y_t}) = \dim H^q(Y_s; \Omega^p_{Y_s}).$$
Then $\dim \Gr_F^p H^{p+q}(Y_t)  = \dim \mathbb{H}^q(Y_s; \bD_{Y_s}(\underline\Omega^{n-p}_{Y_s} ))$. By Grothendieck duality,
$\mathbb{H}^q(Y_s; \bD_{Y_s}(\underline\Omega^{n-p}_{Y_s} ))$ is dual to $\mathbb{H}^{n-q}(Y_s; \underline\Omega^{n-p}_{Y_s})$. 
Computing dimensions gives the result.
\end{proof}

 In fact, combining the above with the Hodge symmetries given by Theorem~\ref{cor-sym2}, we obtain
Corollary~\ref{Corkrat} announced in the introduction.  This is modeled on \cite[Thm. 1]{KLS} (case $k=0$). Note however that loc.\ cit.\ does not assume lci singularities, and works in the analytic category. 

\bigskip

\bigskip

\bigskip

\appendix
\section{Proof of  Conjecture \ref{conjDBrat} for hypersurfaces}\label{AppendixA}
\par\bigskip
\centerline{Morihiko Saito}
\smallskip
\centerline{\smaller\smaller RIMS Kyoto University, Kyoto 606-8502 Japan}
\par\bigskip\noindent
We prove that two definitions of higher $k$-rational singularities for hypersurfaces coincide, see Theorem~\ref{Thm-A} below. (The case $k\,{=}\,0$ was treated in \cite{Saito-b}.) This implies a proof of Conjecture~ \ref{conjDBrat} for hypersurfaces using the converse of a theorem of Musta\c t\u a, Olano, Popa, and Witaszek \cite[Thm.\,1.1]{MOPW} (see \cite[Thm.\,1]{JKSY-duBois}).
\par\medskip\noindent
\begin{theorem}\label{Thm-A}
Assume $X$ is a reduced hypersurface of a smooth complex algebraic variety $Y$. Then for $k\in{\mathbb Z}_{>0}$, we have $\widetilde{\alpha}_X>k{+}1$ if and only if $X$ has only $k$-rational singularities.
\end{theorem}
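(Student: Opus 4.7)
The strategy is to reduce both implications to a comparison of two filtrations on the filtered $\mathcal{D}_Y$-module $\mathcal{O}_Y(*X)$, using Saito's theory of mixed Hodge modules. First I would observe that both sides of the desired equivalence force $X$ to be $k$-Du Bois: the inequality $\tilde\alpha_X>k+1$ gives $\tilde\alpha_X\geq k+1$, which by Proposition~\ref{hypersurfcase}(i) characterizes $k$-Du Bois singularities for hypersurfaces, while $k$-rationality implies $k$-Du Bois by Theorem~\ref{thm-krat}. Thus one may assume $\phi^p\colon\Omega^p_X\to\uOp_X$ is already a quasi-isomorphism for $p\leq k$, and the problem reduces to showing, under this $k$-Du Bois hypothesis, that $\tilde\alpha_X>k+1$ holds if and only if $\psi^p\colon\uOp_X\to\bD_X(\uOnp_X)$ is a quasi-isomorphism for all $p\leq k$.

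Next I would translate both objects into filtered $\mathcal{D}$-module language. By Saito's description of the filtered Du Bois complex of a hypersurface, together with the explicit formulas for the Hodge filtration on $\mathcal{O}_Y(*X)$ of \cite{MP-loc} and \cite{JKSY-duBois}, the graded pieces of $\mathrm{DR}(\mathcal{O}_Y(*X))$ with respect to the Hodge filtration recover $\uOp_X$ up to shift. Applying Grothendieck duality on $X$ together with Saito's duality for mixed Hodge modules, one identifies $\bD_X(\uOnp_X)$ with the analogous graded piece, but now of the \emph{minimal} Hodge filtration $F^{\min}_\bullet\mathcal{O}_Y(*X)$, i.e.\ the submodule generated over $\mathcal{D}_Y$ by the image of $\mathcal{O}_Y$, which is characterized by having $V$-filtration support $\geq 0$ after graph embedding. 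Under these identifications, the map $\psi^p$ corresponds precisely to the natural inclusion $F^{\min}_p\hookrightarrow F_p$.

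Finally, the standard description of the minimal exponent via the $V$-filtration on the graph embedding $B_f=i_{f,+}\mathcal{O}_Y$ (cf.\ \cite{Saito-b}) yields that $F^{\min}_p=F_p$ for all $p\leq k$ if and only if
\begin{equation*}
\mathrm{Gr}^{\alpha}_V B_f = 0 \quad \text{for all } \alpha\in(-k-1,-1),
\end{equation*}
which is exactly the condition $\tilde\alpha_X>k+1$. Combined with the previous step this yields the theorem. The main obstacle will be the duality step: verifying carefully that $\bD_X(\uOnp_X)$ corresponds to the minimal rather than the full Hodge filtration piece, and that $\psi^p$ matches the inclusion $F^{\min}_p\hookrightarrow F_p$ on the nose. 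This requires tracking Saito's duality for filtered $\mathcal{D}$-modules through the identification of $\iota_*\omega_X$ with the appropriate $V^{\geq 0}$-part of $\mathcal{O}_Y(*X)$, using the $k$-Du Bois assumption to kill the auxiliary cohomology terms that would otherwise spoil the comparison.
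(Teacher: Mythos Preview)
Your high-level architecture is close to Saito's, but two of the three technical identifications you rely on are either imprecise or false as stated, and the paper's proof proceeds differently on exactly those points.

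\textbf{The reduction step.} Using Theorem~\ref{thm-krat} to get $k$-Du Bois from $k$-rational is legitimate (Section~\ref{section5} proves it independently), but Saito does not use it. He argues instead by induction on $k$: from $(k{-}1)$-rational he gets $\widetilde\alpha_X>k$ by the inductive hypothesis, which is enough to control $\codim\Sigma$ and to make the Koszul complex $(\Omega_Y^\bullet,\,df\wedge)$ exact in the needed range. This is a minor difference.

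\textbf{The duality identification.} Your claim that $\bD_X(\uOnp_X)$ is the graded piece of a ``minimal Hodge filtration $F^{\min}$ on $\mathcal{O}_Y(*X)$'' and that $\psi^p$ becomes the inclusion $F^{\min}_p\hookrightarrow F_p$ is where the argument breaks. First, the object you describe (``the $\mathcal{D}_Y$-submodule generated by $\mathcal{O}_Y$'') is all of $\mathcal{O}_Y(*X)$, so your $F^{\min}$ is not well defined as written. Second, even granting some corrected version, the direction is backwards: $\psi^p$ goes from $\uOp_X$ to $\bD_X(\uOnp_X)$, so if anything it would correspond to a map $F\to F^{\min}$, not the inclusion. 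What Saito actually does is identify $\bD(\underline\Omega_X^k)[-d_X]$ with $\mathrm{Gr}_F^{d_X-k}\mathrm{DR}\,\bD(\mathbb{Q}_{h,X}(d_X)[d_X])$ and then, via Hodge ideals, with an explicit subcomplex $K^{(k),\bullet}$ of the truncated Koszul complex (his (A11)--(A14)). The map $\psi^k$ is then controlled by the canonical morphism $\mathbb{Q}_{h,X}[d_X]\to\bD(\mathbb{Q}_{h,X}(d_X)[d_X])$ of mixed Hodge modules, and the obstruction to its being an isomorphism on $\mathrm{Gr}_F^{d_X-k}\mathrm{DR}$ is read off from the Hodge levels of the kernel and cokernel of that morphism (his (A17)), which are governed by $\widetilde\alpha_X$.

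\textbf{The $V$-filtration criterion.} Your displayed condition $\mathrm{Gr}^\alpha_V B_f=0$ for $\alpha\in(-k{-}1,-1)$ is not equivalent to $\widetilde\alpha_X>k{+}1$; those graded pieces are essentially never zero. The minimal exponent is detected by the interaction of the Hodge filtration with the $V$-filtration (e.g.\ the lowest $\alpha$ for which a specific $F$-piece of $\mathrm{Gr}^\alpha_V B_f$ is nonzero), not by vanishing of $\mathrm{Gr}^\alpha_V$ itself. Saito's proof avoids this by splitting into the cases $\widetilde\alpha_X\in(k,k{+}1)$ (where the Hodge ideal $I_k(X)\neq\mathcal{O}_Y$ forces (A15) to fail surjectivity) and $\widetilde\alpha_X=k{+}1$ (where the cokernel of (A16) has lowest Hodge level exactly $d_X-k$, obstructing (A18)).

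In short: the skeleton is right, but the two load-bearing identifications you flag as ``the main obstacle'' are not set up correctly, and the final numerical criterion is false. The paper's substitute is a direct computation with Hodge ideals and the Hodge levels of the kernel/cokernel of $\mathbb{Q}_{h,X}\to\bD\mathbb{Q}_{h,X}$.
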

\begin{proof} Assume $\widetilde{\alpha}_X>k{+}1$. We may assume that $X\subset Y$ is defined by a function $f$ shrinking $X,Y$ if necessary. Since $\widetilde{\alpha}_X>k{+}1$, we have by \cite[Thm.\,2]{JKSY-duBois} the canonical isomorphism
$$\underline{\Omega}_X^{d_X-k}=\sigma_{\,{\geqslant}\, 0}\bigl(\Omega_Y^{\scriptscriptstyle\bullet}[d_Y{-}k]|_X,{\rm d} f\wedge\bigr),
\leqno{\rm(A1)}$$
where $d_Y:=\dim Y$. Applying the functor ${\mathbb D}$, we get the isomorphisms
$$\aligned&{\mathbb D}(\underline{\Omega}_X^{d_X-k})={\mathbb D}\bigl(\sigma_{\,{\geqslant}\, 0}(\Omega_Y^{\scriptscriptstyle\bullet}[d_Y{-}k]|_X,{\rm d} f\wedge)\bigr)\\ &=C\bigl(f:\sigma_{\,{\leqslant}\, 0}(\Omega_Y^{\scriptscriptstyle\bullet}[k],{\rm d} f\wedge)\to\sigma_{\,{\leqslant}\, 0}(\Omega_Y^{\scriptscriptstyle\bullet}[k],{\rm d} f\wedge)\bigr)[d_Y{-}1],\endaligned
\leqno{\rm(A2)}$$
since ${\mathbb D}(L)=L^{\vee}{\otimes}_{{\mathcal O}_Y}\omega_Y[d_Y]$ for a locally free ${\mathcal O}_Y$-module $L$ in general.
\par\smallskip
It is well known that
$${\mathcal H}^j(\Omega_Y^{\scriptscriptstyle\bullet},{\rm d} f\wedge)=0\quad\hbox{if}\quad j<{\rm codim}_Y{\rm Sing}\,X,
\leqno{\rm(A3)}$$
$$\widetilde{\alpha}_X<\tfrac{1}{2}\,{\rm codim}_Y{\rm Sing}\,X,
\leqno{\rm(A4)}$$
see for instance \cite[Prop.\,1--2]{JKSY-duBois}. These imply the quasi-isomorphism
$$\sigma_{\,{\leqslant}\, 0}(\Omega_Y^{\scriptscriptstyle\bullet}[k],{\rm d} f\wedge)\,\,\rlap{\hskip1.5mm\raise1.4mm\hbox{$\sim$}}\hbox{$\longrightarrow$}\,\,\Omega_Y^k/{\rm d} f{\wedge}\Omega_Y^{k-1},
\leqno{\rm(A5)}$$
together with the injection
$$\Omega_Y^k/{\rm d} f{\wedge}\Omega_Y^{k-1}\hookrightarrow\Omega_Y^{k+1},
\leqno{\rm(A6)}$$
which gives $f$-torsion-freeness of $\Omega_Y^k/{\rm d} f{\wedge}\Omega_Y^{k-1}$. We thus get the canonical isomorphism
$${\mathbb D}(\underline{\Omega}_X^{d_X-k})[-d_X]=\Omega_X^k.
\leqno{\rm(A7)}$$
Here $k$ can be replaced by any $j\in[0,k{-}1]$. So $X$ has only $k$-rational singularities.
\par\smallskip
Assume now $X$ has only $k$-rational singularities. This means that the composition
$$\Omega_X^k\to\underline{\Omega}_X^k\to{\mathbb D}(\underline{\Omega}_X^{d_X-k})[-d_X]
\leqno{\rm(A8)}$$
is an isomorphism (hence $\Omega_X^k$ is a direct factor of $\underline{\Omega}_X^k$) and the same holds with $k$ replaced by any $j\in[0,k{-}1]$. We will consider the morphisms obtained by applying the functor ${\mathbb D}$ to these morphisms. We argue by induction on $k$. Note that
$$\widetilde{\alpha}_X>k,
\leqno{\rm(A9)}$$
since $X$ has only $(k{-}1)$-rational singularities by definition. This implies that
$$k{+}1<{\rm codim}_Y{\rm Sing}\,X,
\leqno{\rm(A10)}$$
using (A4), since ${\rm codim}_Y{\rm Sing}\,X\,{\geqslant}\, 2$. By the same argument as above, we then get that
$${\mathbb D}(\Omega_X^k)[-d_X]=\sigma_{\,{\geqslant}\, 0}(\Omega_Y^{\scriptscriptstyle\bullet}[d_Y{-}k]|_X,{\rm d} f\wedge).
\leqno{\rm(A11)}$$
\par\smallskip
On the other hand we have by \cite[Thm.~4.2]{Saito2000}
$$\underline{\Omega}_X^k={\rm Gr}_F^k{\rm DR}\bigl({\mathbb Q}_{h,X}[d_X]\bigr)[k{-}d_X],
\leqno{\rm(A12)}$$
hence
$${\mathbb D}(\underline{\Omega}_X^k)[-d_X]={\rm Gr}_F^{d_X-k}{\rm DR}\,{\mathbb D}\bigl({\mathbb Q}_{h,X}(d_X)[d_X]\bigr)[d_X{-}k].
\leqno{\rm(A13)}$$
By the theory of Hodge ideals (see \cite{MP-Mem}, \cite{SaitoV}, \cite{JKSY}, \cite{JKSY-duBois}) and using (A9), we can get the isomorphism
$${\mathbb D}(\underline{\Omega}_X^k)[-d_X]\cong K^{(k),\scriptscriptstyle\bullet}\subset\sigma_{\,{\geqslant}\, 0}(\Omega_Y^{\scriptscriptstyle\bullet}[d_Y{-}k]|_X,{\rm d} f\wedge),
\leqno{\rm(A14)}$$
where $K^{(k),j}:=\Omega_Y^{j+d_Y-k}|_X$ if $j\ne k$, and $K^{(k),k}:=I_k(X)\Omega_Y^{d_Y}/f\Omega_Y^{d_Y}$ with $I_k(X)$ the Hodge ideal.
\par\smallskip
If $\widetilde{\alpha}_X\,{\in}\,(k,k+1)$ so that $I_k(X)\ne{\mathcal O}_Y$ (see for instance \cite[Cor.\,1]{SaitoV}), then by (A11), (A14) the canonical morphism
$${\mathcal H}^{k-d_X}{\mathbb D}(\underline{\Omega}_X^k)\to{\mathcal H}^{k-d_X}{\mathbb D}(\Omega_X^k)
\leqno{\rm(A15)}$$
is never surjective. Indeed, since $\Omega_X^k$ is a direct factor of $\underline{\Omega}_X^k$, the mapping cone of a morphism $\phi\,{:}\,\Omega_X^k\,{\to}\,\underline{\Omega}_X^k$ is independent of $\phi$ as long as it induces an isomorphism on $X\,{\setminus}\,{\rm Sing}\,X$. Note that ${\mathcal H}^0_{{\rm Sing}\,X}\Omega_X^k\,{=}\, 0$, since the proof of Prop.\,2.2 in \cite{JKSY-duBois} holds also for $q\,{=}\, p{+}1$ (where the last inequality in the proof of Prop.\,2.2 becomes $q{+}1\,{=}\, p{+}2<{\rm codim}_Y{\rm Sing}\,X$). Hence the dual of the composition (A8) cannot be an isomorphism.
\par\smallskip
Assume now $\widetilde{\alpha}_X\,{=}\,k{+}1$. (Here $\Omega_X^k\,{=}\,\underline{\Omega}_X^k$, see \cite{MOPW}, \cite{JKSY-duBois}.) The canonical isomorphism (A1) and the second morphism of (A8) are induced by the canonical morphism of mixed Hodge modules
$${\mathbb Q}_{h,X}[d_X]\to{\mathbb D}\bigl({\mathbb Q}_{h,X}(d_X)[d_X]\bigr),
\leqno{\rm(A16)}$$
see \cite[3.1]{JKSY-duBois}. Note that this coincides with the composition of $${\mathbb Q}_{h,X}[d_X]\,{\to}\,\rho_*{\mathbb Q}_{h,\widetilde{X}}[d_X]$$ with its dual, where $\rho\,{:}\,\widetilde{X}\,{\to}\, X$ is a desingularization. Let $(M',F),(M'',F)$ be the underlying filtered ${\mathcal D}_Y$-modules of its kernel and cokernel respectively. Then the condition $\widetilde{\alpha}_X\,{=}\, k{+}1$ implies that
$$\min\{p\in{\mathbb Z}\mid F_pM'\ne 0\}>d_X{-}k,\quad\min\{p\in{\mathbb Z}\mid F_pM''\ne 0\}=d_X{-}k,
\leqno{\rm(A17)}$$
using \cite[(1.3.2--4)]{SaitoV} and \cite[3.1]{JKSY-duBois} together with the $N$-primitive decomposition, see for instance \cite[(2.2.4)]{KLS}. We then see that the morphism (A16) {\it cannot} induce an isomorphism
$${\rm Gr}_F^{d_X-k}{\rm DR}\bigl({\mathbb Q}_{h,X}[d_X]\bigr)\,\,\rlap{\hskip1.5mm\raise1.4mm\hbox{$\sim$}}\hbox{$\longrightarrow$}\,\,{\rm Gr}_F^{d_X-k}{\rm DR}\bigl({\mathbb D}\bigl({\mathbb Q}_{h,X}(d_X)[d_X]\bigr)\bigr).
\leqno{\rm(A18)}$$
This means that the composition (A8) cannot be an isomorphism in view of (A12--A13). This is a contradiction. We thus get $\widetilde{\alpha}_X>k{+}1$. This finishes the proof of Theorem \ref{Thm-A}.
\end{proof}

Combining Theorem \ref{Thm-A} with the converse of a theorem of Musta\c t\u a, Olano, Popa, and Witaszek \cite[Thm.\,1.1]{MOPW} (see \cite[Thm.\,1]{JKSY-duBois}), we can get a positive answer to Conjecture~\ref{conjDBrat} for hypersurfaces as follows.
\begin{corollary}\label{Cor-A} Assume $X$ is a reduced hypersurface of a smooth complex algebraic variety $Y$, and has only $k$-Du Bois singularities $(k\,{\geqslant}\, 1)$. Then $X$ has only $(k{-}1)$-rational singularities.
\end{corollary}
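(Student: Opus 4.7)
The proof should be essentially immediate once the numerical characterizations of both notions are in hand. The plan is to translate the hypothesis on $k$-Du Bois and the desired conclusion on $(k-1)$-rationality into conditions on the minimal exponent $\widetilde{\alpha}_X$, and then observe that the resulting inequalities are compatible.

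First I would invoke the converse to the theorem of Musta\c{t}\u{a}-Olano-Popa-Witaszek, as established in \cite[Thm.\,1]{JKSY-duBois}: for a reduced hypersurface $X\subset Y$, the $k$-Du Bois condition is equivalent to the numerical inequality $\widetilde{\alpha}_X\,{\geqslant}\, k{+}1$. Applied to our hypothesis (with $k\,{\geqslant}\, 1$), this gives $\widetilde{\alpha}_X\,{\geqslant}\, k{+}1$, hence in particular $\widetilde{\alpha}_X\,{>}\, k\,{=}\,(k{-}1){+}1$.

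Next, I would apply Theorem \ref{Thm-A} with $k$ replaced by $k-1$: the strict inequality $\widetilde{\alpha}_X\,{>}\,(k{-}1){+}1$ is equivalent to $X$ having only $(k{-}1)$-rational singularities. Combining the two equivalences yields the desired conclusion. There is no real obstacle here, as both the Du Bois and rational conditions have already been reduced to threshold conditions on $\widetilde{\alpha}_X$ with the $k$-Du Bois threshold being weak ($\geqslant$) and the $k$-rational threshold being strict ($>$); the shift from $k$-Du Bois to $(k{-}1)$-rational simply reflects this difference between the two thresholds. The only care needed is the case $k\,{=}\,1$, where $(k{-}1)$-rational means $0$-rational, i.e.\ rational in the usual sense (cf.\ Corollary~\ref{cor-rat0}), and the argument is unchanged since $\widetilde{\alpha}_X\,{\geqslant}\, 2\,{>}\, 1$ is the classical characterization of rational hypersurface singularities due to Saito \cite{Saito-b}.
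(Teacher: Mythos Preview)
Your proposal is correct and matches the paper's approach exactly: the paper does not give a detailed proof of Corollary~\ref{Cor-A} but simply states (just before the corollary) that it follows by combining Theorem~\ref{Thm-A} with the converse of \cite[Thm.\,1.1]{MOPW} as in \cite[Thm.\,1]{JKSY-duBois}, which is precisely the chain of implications you spell out. Your handling of the boundary case $k\,{=}\,1$ is also appropriate, since Theorem~\ref{Thm-A} is stated for $k\in\mathbb{Z}_{>0}$ and the case $k{-}1\,{=}\,0$ is covered by \cite{Saito-b} as noted at the start of the appendix.
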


\begin{remark}In \cite{JKSY-duBois} an assertion slightly stronger than the converse of \cite[Thm.\,1.1]{MOPW} is proved (since it is not assumed that the isomorphism is induced by the canonical morphism). The assertion can be proved rather easily using the extension of \cite[Prop.\,2.2]{JKSY-duBois} to the case $q\,{=}\, p{+}1$ as is explained after (A15) and assuming that the restriction of the isomorphism to the smooth points of $D$ is the identity. Indeed, the argument in \cite[2.3]{JKSY-duBois} is so complicated, since even this natural condition is not assumed.
\end{remark}

\bibliography{duBois}
\end{document}